\numberwithin{equation}{section}
\begin{document}

\newtheorem{theorem}{Theorem}[section]
\newtheorem{lemma}[theorem]{Lemma}
\newtheorem{definition}[theorem]{Definition}
\newtheorem{proposition}[theorem]{Proposition}
\newtheorem{corollary}[theorem]{Corollary}
\newtheorem{example}[theorem]{Example}
\newtheorem{remark}[theorem]{Remark}
\newtheorem{conjecture}[theorem]{Conjecture}
\newtheorem*{assumption}{Theorem}

\title{Generalised Particle Filters with Gaussian Mixtures}
\author{Dan Crisan\thanks{Department of Mathematics, Imperial College London, London, SW7 2AZ, UK. Email: d.crisan@imperial.ac.uk},\and Kai Li\thanks{Department of Mathematics, Uppsala University, Box 480, Uppsala, 75106, Sweden. Email: kai.li@math.uu.se}}

\maketitle

\begin{abstract}
Stochastic filtering is defined as the estimation of a partially observed dynamical system. A massive scientific and computational effort
is dedicated to the development of numerical methods for approximating the
solution of the filtering problem. Approximating the solution of the filtering problem with Gaussian mixtures has been a very popular method since the 1970s (see \cite{Alspach and Sorenson},\cite{Anderson and Moore},\cite{Sorenson and Alspach},\cite{Tam and Moore}). Despite nearly fifty years of development, the existing work is based on the success of the numerical implementation and is not theoretically justified. This paper fills this gap and contains a rigorous analysis of a new Gaussian mixture approximation to the solution of the filtering problem. We deduce the $L^2$-convergence rate for the approximating system and show some numerical example to test the new algorithm. 
\end{abstract}

\section{Introduction}

The stochastic filtering problem deals with the estimation of an evolving dynamical system, called the \emph{signal}, based on \emph{partial observations} and a priori stochastic model. The signal is modelled by a stochastic process denoted by $X=\{X_t,\ t\geq0\}$, defined on a probability space $(\Omega,\mathcal{F},\mathbb{P})$. The signal process is not available to observe directly; instead, a partial observation is obtained and it is modelled by a process $Y=\{Y_t,\ t\geq0\}$.
The information available from the observation up to time $t$ is defined as the filtration $\mathcal{Y}=\{\mathcal{Y}_t,\ t\geq0\}$ generated by the observation process $Y$.
In this setting, we want to compute $\pi_t$ --- the conditional distribution of $X_t$ given $\mathcal{Y}_t$. 
The analytical solution to the filtering problem are rarely available, and there are only few exceptions such as the Kalman-Bucy filter and the Bene\v s filter (see, e.g. Chapter 6 in \cite{Bain and Crisan}). Therefore numerical algorithms for solving the filtering equations are required. 

The description of a numerical approximation for $\pi_t$ should contain the following three parts: the class of approximations; the law of evolution of the approximation; and the method of measuring the approximating error.  
Generalised particle filters with Gaussian mixtures is a numerical scheme to approximate the solution of the filtering problem, and it is a natural generalisation of the classic particle filters (or sequential Monte Carlo methods) in the sense that the Dirac delta measures used in the classic particle approximations are replaced by mixtures of Gaussian measures. In other words, Gaussian mixture approximations are algorithms that approximate $\pi_t$ with random measures of the form
$$
\sum_ia_i(t)\Gamma_{v_i(t),\omega_j(t)},
$$
where $\Gamma_{v_j(t),\omega_j(t)}$ is the Gaussian measure with mean $v_j(t)$ and covariance matrix $\omega_j(t)$. The evolution of the weights, the mean and the covariance matrices satisfy certain stochastic differential equations which are numerically solvable. 
As time increases, typically the trajectories of a large number of particles diverge from the signal's trajectory; with only a small number remaining close to the signal. The weights of the diverging particles decrease rapidly, therefore contributing very little to the approximating system, and causing the approximation to converge very slowly to the conditional distribution. In order to tackle this so-called \emph{sample degeneracy} phenomenon, a \emph{correction procedure} is added. At correction times, each particle is replaced by a random number of offspring. Redundant particles are abandoned and only the particles contributing significantly to the system (i.e. with large weights) are carried forward; so that the most probable region of the trajectory of the signal process $X$ will be more thoroughly explored. This correction mechanism is also called branching or resampling. Currently the multinomial branching algorithm and the tree based branching algorithm (TBBA) are two approaches for the correction step.

The idea of using Gaussian mixtures in the context of Bayesian estimation can be traced back to Alspach and Sorenson (\cite{Alspach and Sorenson}, \cite{Sorenson and Alspach}), and Tam and Moore (\cite{Tam and Moore}). Later in the work by Anderson and Moore (\cite{Anderson and Moore}), Gaussian mixtures are used in combination with extended Kalman filters to produce an empirical approximation to the discrete time filtering problem. The research on this area became more active since 2000s. Recently in Doucet et al (\cite{Doucet et al 2}), these methods are revisited to construct a method which uses Rao-Blackwellisation in order to take advantage of the analytic structure present in some important classes of state-space models. Recent advances in this direction are contained in \cite{Andrieu and Doucet}, \cite{Doucet et al 3} and \cite{Freitas}.
In Chen and Liu (\cite{Chen and Liu}), a random mixture of Gaussian distributions, called mixture Kalman filters, are used to approximate a target distribution again based on the explicit linear filter. Further work on this topic is contained in \cite{Guo et al},  \cite{Sun et al}, \cite{Morelande and Challa}, and \cite{Wu et al}. Gustafsson et al (\cite{Gustafsson et al}) describes a general framework for a number of applications, which are implemented using the idea of Gaussian particle filters. Further development in this direction can be found in  \cite{Djuric et al} ,\cite{Fox}, \cite{Gustafsson} and \cite{Kwok et al}. For more recent work related to Gaussian mixture approximations, see Kotecha and Djuri\'c (\cite{Kotecha and Djuric}, \cite{Kotecha and Djuric2}), Le Gland et al (\cite{Le Gland and Monbet and Tran}), Reich (\cite{Reich}), Flament et al (\cite{Flament and Fleury and Davoust}), Van der Merwe and Wan (\cite{Van der Merwe and Wan}), Carmi et al (\cite{Carmi and Septier and Godsill}), and Iglesias, Law and Stuart (\cite{Iglesias and Law and Stuart}).

\subsection{Contribution of the paper}
In this paper we construct a new approximation to the conditional distribution of the signal $\pi=\{\pi_t:t\geq0\}$ that consists of a mixture of Gaussian measures. In contrast with the existing work, the approximation is not based on the (Extended) Kalman filter.
The approximation is analysed theoretically: we obtain the rate of convergence of the approximation to $\pi_t$ as the number of Gaussian measures increases. We are not aware of other theoretically justified algorithms of this kind.

In particular, if $\pi^n=\{\pi_t^n:t\geq0\}$ is the constructed approximation of $\pi_t$, where $n$ is the number of Gaussian measures, we prove the following result.
\begin{assumption}
\textbf{For any $T\geq0$ and $m\geq6$, there exists a constant $C(T,\varphi)$ independent of $n$, such that for any test function $\varphi\in C_b^{m+2}(\mathbb R^d)$,
\begin{equation}
\mathbb E\left[\sup_{t\in[0,T]}\left|\pi_t^n(\varphi)-\pi_t(\varphi)\right|\right]\leq\frac{C(T,\varphi)}{\sqrt n}.
\end{equation}} 
\end{assumption}
Additional $L^p$ result can also be covered. The proof of this result is harder than the proof of the convergence of approximating measures under the classic particle filters. The technical difficulty arise from the fact that the covariance matrix of the component Gaussian measure is random and as a result we cannot use a standard approach such as that contained in the proof of the convergence of the classic particle filters by using the dual of the conditional distribution process (see, e.g. Chapter 7 in \cite{Bain and Crisan}). We deal with this difficulty by adopting the ideas in \cite{Obanubi}, namely we show a variation of Theorem 8 in \cite{Obanubi} (see Theorem \ref{prop.general_convergence_result}), which is an abstract convergence criterion for signed measure-valued process. We then verify the conditions of Theorem \ref{prop.general_convergence_result} hold in our case so that it can be applied to prove the convergence of the Gaussian mixture approximating measure. 

We also make use of the Bene\v s filter as an example to numerically compare the performances of the Gaussian mixture approximation and the classic particle approximation. The implementation shows that generally Gaussian mixtures approximation performs better than particle approximations when the number of particles/Gaussian mixtures is relatively small.\\

The following is a summary of the contents of the paper.

In Section 2, we review the central results of stochastic filtering theory\footnote{For a short historical account of the development of stochastic filtering problem, see, for example, Section 1.3 of \cite{Bain and Crisan}.}. The filtering framework is introduced first, with the focus on the problems where the signal $X$ and observation $Y$ are diffusion processes and the filtering equations are presented.

Section 3 contains the description of the \emph{generalised particle filters with Gaussian mixtures}. These approximations use mixtures of Gaussian measures which will be set out, with the aim of estimating the solutions to the Zakai and the Kushner-Stratonovich equations. The Tree Based Branching Algorithm (TBBA) and Multinomial branching algorithm are discussed as possible correction mechanisms..

Section 4 contains the main results of the thesis, which is the law of large numbers theorem associated to the approximating system. In this secrion, the evolution equations of the approximating systems introduced in the previous chapter are derived. It is shown that, under certain conditions, the unnormalised and normalised versions of the approximations of the conditional distribution converge to the solutions of the Zakai equation and the Kushner-Stratonovich equation, respectively. Section 5 contains a numerical example to compare the approximations by using the classic particle filters and the Gaussian mixtures.

This paper is concluded with an Appendix which contains some additional proofs.

\subsection{Notations}
\begin{itemize}
\item
$(\Omega,\mathcal F,\mathbb P)$ - probability triple consisting of a sample
space $\Omega$, the $\sigma$-algebra $\mathcal F$ which
is the set of all measurable events, an the probability measure $\mathbb
P$.
\item
$(\mathcal F_t)_{t\geq0}$ - a filtration, an increasing family of sub-$\sigma$-algebras
of $\mathcal F$; $\mathcal F_s\subset\mathcal F_t$, $0\leq s\leq t$.
\item
$\mathbb R^d$ - the $d$-dimensional Euclidean space.
\item
$\overline{\mathbb R^d}$ - the one-point compactification of $\mathbb R^d$
formed by adding a single point at infinity to $\mathbb R^d$.
\item
$\left(\mathbb S,\mathcal B(\mathbb S)\right)$ - the state space of the signal.
Normally $\mathbb S$ is taken as a complete separable space, and $\mathcal
B(\mathbb S)$ is the associated Borel $\sigma$-algebra, that is, the $\sigma$-algebra
generated by the open sets in $\mathbb S$.
\item
$B(\mathbb S)$ - the space of bounded $\mathcal B(\mathbb S)$-measurable
functions from $\mathbb S$ to $\mathbb R$.
\item
$\mathcal P\left(\mathbb S\right)$ - the family of Borel probability measures
on space $\mathbb S$.
\item
$C_b(\mathbb R^d)$ - the space of bounded continuous functions on $\mathbb
R^d$.
\item
$C_b^m(\mathbb R^d)$ - the space of bounded continuous functions on $\mathbb
R^d$ with bounded derivatives up to order $m$.
\item
$C_0^m(\mathbb R^d)$ - the space of continuous functions on $\mathbb R^d$,
vanishing at infinity with continuous partial derivatives up to order $m$.
\item
$\Vert\cdot\Vert$ - the Euclidean norm for a $d\times p$ matrix $a$, $\Vert a\Vert=\sqrt{\sum_{i=1}^d\sum_{j=1}^pa_{ij}^2}$.
\item
$\|\cdot\|_\infty$ - the supremum norm for $\varphi:\mathbb R^d\rightarrow\mathbb
R$: $\|\varphi\|_\infty=\sup_{x\in\mathbb R^d}\|\varphi(x)\|$.
\item
$\|\cdot\|_{m,\infty}$ - the norm such that for  $\varphi$ on $\mathbb R^d$,
$\|\varphi\|_{m,\infty}=\sum_{|\alpha|\leq m}\sup_{x\in\mathbb R^d}|D_\alpha\varphi(x)|$,
where $\alpha=(\alpha^1,\ldots,\alpha^d)$ is a multi-index and $D_\alpha=(\partial_1)^{\alpha_1}\cdots(\partial_d)^{\alpha_d}$.
\item
$\mathcal M_{F}(\mathbb R^d)$ - the set of finite measures on $\mathbb R^d$.
\item
$\mathcal M_{F}(\overline{\mathbb R^d})$ - the set of finite measures on
$\overline{\mathbb R^d}$.
\item
$D_{\mathcal M_{F}(\mathbb R^d)}[0,T]$ - the space of c\`adl\`ag functions
(or right continuous functions with left limits) $f:[0,T]\rightarrow\mathcal
M_{F}(\mathbb R^d)$.
\item
$D_{\mathcal M_{F}(\mathbb R^d)}[0,\infty)$ - the space of c\`adl\`ag functions
(or right continuous functions with left limits) $f:[0,\infty)\rightarrow\mathcal
M_{F}(\mathbb R^d)$.
\end{itemize}

\section{The Filtering Problem and Key Result}
Let $(\Omega,\mathcal{F},\mathbb{P})$ be a probability space together with a filtration $(\mathcal{F}_t)_{t\geq0}$ which satisfies the usual conditions. On $(\Omega,\mathcal{F},\mathbb{P})$ we consider a $\mathcal{F}_t$-adapted process $X=\{X_t; t\geq0\}$  taking value on $\mathbb{R}^d$.
To be specific,
let $X=(X^i)_{i=1}^d$ be the solution of a $d$-dimensional stochastic differential
equation driven by a $p$-dimensional Brownian motion $V=(V^j)_{j=1}^p$:
\begin{equation}\label{eq.signal_is_diffusion}
X_t^i=X_0^i+\int_0^tf^i(X_s)ds+\sum_{j=1}^p\int_0^t\sigma^{ij}(X_s)dV_s^j,\quad\quad
i=1,\ldots,d
\end{equation}
We assume that both $f=(f^i)_{i=1}^d:\mathbb{R}^d\rightarrow\mathbb{R}^d$
and $\sigma=(\sigma^{ij})_{i=1,\ldots,d;j=1,\ldots,p}:\mathbb{R}^d\rightarrow\mathbb{R}^{d\times
p}$ are globally Lipschitz. Under the globally Lipschitz condition, \eqref{eq.signal_is_diffusion}
has a unique solution (Theorem 5.2.9 in \cite{Karatzas and Shreve}). The
infinitesimal generator $A$ associated with the process $X$ is the second-order differential
operator
\begin{equation}\label{eq.generator_A}
A=\sum_{i=1}^d f^i\frac{\partial}{\partial x^i}+\sum_{i=1}^d\sum_{j=1}^da^{ij}\frac{\partial^2}{\partial
x^i\partial x^j},
\end{equation}   
where $a=(a^{ij})_{i,j=1,\ldots,d}:\mathbb{R}^d\rightarrow\mathbb{R}^{d\times
d}$ is the matrix-valued function defined as $a=\frac{1}{2}\sigma^\top\sigma$. 
We denote by $\mathcal{D}(A)$ the domain of $A$. 

Let $h=(h_i)_{i=1}^m:\mathbb{R}^d\rightarrow\mathbb{R}^m$ be a bounded measurable function. Let $W$ be a standard $\mathcal{F}_t$-adapted $m$-dimensional Brownian motion on  $(\Omega,\mathcal{F},\mathbb{P})$ independent of $X$, and $Y$ be the process which satisfies the following evolution equation
\begin{equation}\label{eq.observation}
Y_t=Y_0+\int_0^th(X_s)ds+W_t,
\end{equation}
This process $Y=\{Y_t;t\geq0\}$ is called the \textit{observation} process. Let $\{\mathcal{Y}_t,t\geq0\}$ be the usual augmentation of the filtration associated with the process $Y$, viz
$\mathcal{Y}_t=\sigma(Y_s,s\in[0,t])\vee\mathcal{N}.$

As stated in the introduction, the filtering problem consists in determining the conditional distribution $\pi_t$ of the signal $X$ at time $t$ given the information accumulated from observing $Y$ in the interval $[0,t]$; that is, for $\varphi\in B(\mathbb{R}^d)$,
\begin{equation}
\pi_t(\varphi)=\int_{\mathbb{S}}\varphi(x)\pi_t(dx)=\mathbb{E}[\varphi(X_t)\mid\mathcal{Y}_t].
\end{equation}

Throughout this paper we will assume that the coefficients $f^i$, $\sigma^{ij}$ and $h^i$ are all bounded and Lipschitz; $f^i$ and $\sigma^{ij}$ are six times differentiable, and $h^i$ is twice differentiable. In other words, we assume that $f^i, \sigma^{ij}\in C_b^6(\mathbb R^d)$ and $h^i\in C_b^2(\mathbb R^d)$. 

Let $\tilde{\mathbb P}$ be a new probability measure on $\Omega$, under which the process $Y$ is a Brownian motion. To be specific, let $Z=\{Z_t,t\geq0\}$ be the process defined by
\begin{equation}\label{eq.definition_Z}
Z_t=\exp\left(-\sum_{i=1}^m\int_0^t h^i(X_s)dW_s^i-\frac{1}{2}\sum_{i=1}^m\int_0^t h^i(X_s)^2ds\right),\quad t\geq0;
\end{equation}
and we introduce a probability measure $\tilde{\mathbb{P}}^t$ on $\mathcal{F}_t$ by specifying its Radon-Nikodym derivative with respect to $\mathbb{P}$ to be given by $Z_t$.
We finally define a probability measure $\tilde{\mathbb{P}}$ which is equivalent to $\mathbb{P}$ on $\bigcup_{0\leq t<\infty}\mathcal{F}_t$.
Then we have the following Kallianpur-Striebel formula (see \cite{Kallianpur and Karandikar})
\begin{equation}
\pi_t(\varphi)=\frac{\rho_t(\varphi)}{\rho_t(\mathbf 1)}\quad\quad \tilde{\mathbb P}(\mathbb{P})-a.s.\quad\text{for}\ \varphi\in B(\mathbb{R}^d);
\end{equation}
where $\rho_t$ is an $\mathcal Y_t$-adapted measure-valued process satisfying the following \textbf{Zakai Equation} (see \cite{Zakai}).
\begin{equation}\label{eq.zakai_equation}
\rho_t(\varphi)=\pi_0(\varphi)+\int_0^t\rho_s(A\varphi)ds+\int_0^t\rho_s(\varphi h^\top)dY_s,\quad\tilde{\mathbb{P}}-a.s.\quad\forall t\geq0
\end{equation}
for any $\varphi\in\mathcal{D}(A)$. Also the process $\rho=\{\rho_t;t\geq0\}$ is called the unnormalised conditional distribution of the signal. The equation satisfied by $\pi$ is called the Kushner-Stratonovich equation.

In the following we will analyse the generalised particle filters with Gaussian mixtures, and we will give detailed analysis in the following sections. We denote by $\pi^n=\{\pi_t^n;t\geq0\}$ the approximating measure of the solution of the filtering problem, where $n$ is the number of Gaussian measures in the approximating system. Then the main convergence of $\pi^n$ is stated as follows:

\begin{theorem}\label{thm.main_result}
For any $T>0$ and $m\geq6$, there exists a constant $C(T)$ independent of $n$, such that for any test function $\varphi\in C_b^{m+2}(\mathbb R^d)$
\begin{equation}
\tilde{\mathbb E}\left[\sup_{t\in[0,T]}|\pi_t^n(\varphi)-\pi_t(\varphi)|\right]\leq\frac{C(T)}{\sqrt n}\|\varphi\|_{m+2,\infty}.
\end{equation}
\end{theorem}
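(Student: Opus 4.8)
The plan is to work first with the unnormalised approximation. Let $\rho_t^n$ denote the Gaussian-mixture analogue of the unnormalised filter $\rho_t$, so that $\pi_t^n=\rho_t^n/\rho_t^n(\mathbf 1)$, and I would aim to establish the intermediate bound
\[
\tilde{\mathbb E}\Big[\sup_{t\in[0,T]}\big|\rho_t^n(\varphi)-\rho_t(\varphi)\big|\Big]\le \frac{C(T)}{\sqrt n}\,\|\varphi\|_{m+2,\infty}
\]
for $\rho^n$, and then transfer the estimate to the normalised measures $\pi^n$ through the Kallianpur--Striebel formula. The central device is the abstract convergence criterion for signed measure-valued processes, Theorem \ref{prop.general_convergence_result}: rather than comparing $\rho^n$ with $\rho$ via a dual (backward) equation --- which is unavailable here precisely because the component covariances are random --- I would verify the hypotheses of that theorem directly for the pair $(\rho^n,\rho)$.

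The first step is to derive the semimartingale decomposition of $\rho_t^n(\varphi)$ across and between correction times, by applying It\^o's formula to each mixture component $\Gamma_{v_i(t),\omega_i(t)}$ together with the stochastic differential equations governing the weights $a_i$, the means $v_i$ and the covariances $\omega_i$. This yields an equation of the form
\[
\rho_t^n(\varphi)=\rho_0^n(\varphi)+\int_0^t\rho_s^n(A\varphi)\,ds+\int_0^t\rho_s^n(\varphi h^\top)\,dY_s+M_t^n(\varphi)+R_t^n(\varphi),
\]
to be compared term by term with the Zakai equation \eqref{eq.zakai_equation}. Here $M_t^n$ collects the martingale contributions --- both the \emph{propagation} martingale driven by the Brownian motions of the mixture parameters and the \emph{branching} martingale introduced at the correction (resampling) times --- while $R_t^n$ is a finite-variation remainder. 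The second step is to bound each ingredient demanded by Theorem \ref{prop.general_convergence_result}: the initialisation error $\tilde{\mathbb E}\big[|\rho_0^n(\varphi)-\pi_0(\varphi)|\big]$ at order $n^{-1/2}$ via a law-of-large-numbers estimate for the initial sample; the predictable quadratic variation $\langle M^n(\varphi)\rangle_T$ at order $n^{-1}$, using that the branching mechanism (TBBA or multinomial) keeps the total mass and its moments controlled while injecting variance of order $1/n$ per correction; and uniform-in-$n$ moment bounds on $\sup_{t\le T}\rho_t^n(\mathbf 1)$.

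The main obstacle, and the point where the Gaussian-mixture setting genuinely departs from the classical particle filter, is the control of the remainder $R_t^n$. Because each component carries a strictly positive, random covariance $\omega_i(t)$, the drift $\rho_s^n(A\varphi)$ does not coincide with the infinitesimal evolution of $\rho_s^n(\varphi)$ produced by the parameter dynamics: a Gaussian kernel of nonzero width does not reproduce the non-constant coefficients of $A$ exactly, and the mismatch is a Taylor remainder involving derivatives of $\varphi$ up to order $m+2$ weighted by powers of $\|\omega_i(t)\|$. I would estimate this bias using the smoothness of $\varphi$ --- this is exactly why the test function is taken in $C_b^{m+2}(\mathbb R^d)$, with the order $m\ge6$ matching the $C_b^6$ regularity assumed on $f$ and $\sigma$ --- together with moment bounds showing that $\|\omega_i(t)\|$ remains controlled on $[0,T]$, so that $\tilde{\mathbb E}\big[\sup_{t\le T}|R_t^n(\varphi)|\big]$ is of order $n^{-1/2}\|\varphi\|_{m+2,\infty}$. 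Establishing these covariance moment estimates, and checking that the remainder is compatible with the hypotheses of Theorem \ref{prop.general_convergence_result}, is where the technical weight of the argument lies; a Gr\"onwall step then closes the unnormalised bound.

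Finally I would pass from $\rho^n$ to $\pi^n$. Writing $\pi_t^n(\varphi)-\pi_t(\varphi)$ through the total masses and using the elementary estimate
\[
\big|\pi_t^n(\varphi)-\pi_t(\varphi)\big|\le\frac{1}{\rho_t^n(\mathbf 1)}\big|\rho_t^n(\varphi)-\rho_t(\varphi)\big|+\frac{|\pi_t(\varphi)|}{\rho_t^n(\mathbf 1)}\big|\rho_t^n(\mathbf 1)-\rho_t(\mathbf 1)\big|,
\]
the normalised rate follows from the unnormalised one applied to the two test functions $\varphi$ and $\mathbf 1$, once a uniform lower bound on $\rho_t(\mathbf 1)$ and moment control on $\sup_{t\le T}\big(\rho_t^n(\mathbf 1)\big)^{-1}$ are secured. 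Taking the supremum over $t\in[0,T]$ and splitting the random factor $1/\rho_t^n(\mathbf 1)$ off by Cauchy--Schwarz before taking $\tilde{\mathbb E}$ then delivers the claimed bound of Theorem \ref{thm.main_result}.
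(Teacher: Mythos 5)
Your architecture is the same as the paper's: It\^o decomposition of $\rho^n$ between and across branching times, the abstract criterion of Theorem \ref{prop.general_convergence_result} in place of the dual-process argument, then transfer to $\pi^n$ via Kallianpur--Striebel. But there is a genuine gap at the step you yourself identify as carrying the technical weight, namely the claim that the bias $R_t^n$ is of order $n^{-1/2}$ ``using moment bounds showing that $\|\omega_i(t)\|$ remains controlled on $[0,T]$.'' Controlled covariances give a \emph{bounded} bias, not a vanishing one. In this scheme $\omega_j^n(t)=\alpha\bigl(\beta+\int_{i\delta}^t\sigma^2(v_j^n(s))\,ds\bigr)$ contains no factor of $n$: the Taylor-remainder terms (the paper's $R^1_{s,j}$, $R^2_{s,j}$, and the offspring-variance mismatch $D^{n,\varphi}$) contribute $(\alpha\delta)^2+(\alpha\beta)^2$ to the squared error, which is a constant for fixed $\alpha>0$. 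The paper's Theorem \ref{firstthm.convergence_of_rho_t_n} makes this explicit via $c(n,\delta,\alpha,\beta)=\max\{1/n,(\alpha\delta)^2,1/(n\sqrt\delta),(\alpha\beta)^2,\alpha/n\}$, and the $1/n$ rate is only recovered by \emph{tuning the algorithm}: $\delta$ and $\beta$ fixed, $\alpha\propto 1/\sqrt n$ (Corollary \ref{thm.convergence_of_rho_t_n}). Without identifying this scaling, your argument cannot produce the $n^{-1/2}$ in the statement; indeed for fixed $\alpha$ the theorem is false. Relatedly, your ``finite-variation remainder'' is not finite variation --- the $R^2$ part is driven by $dY$ and the $R^3$ part by the $V^{(j)}$'s; only the latter gains a $1/n$ from independence, while the $dY$-driven part is again $O((\alpha\delta)^2)$ in mean square and needs the same tuning.

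Two secondary points. First, to get the supremum inside the expectation the paper does not apply the abstract criterion to $\sup_t$ directly; it first proves the pointwise-in-$t$ bound via the mild (semigroup) form $\rho_t^n(P_{s-t}\varphi)$, and then in Proposition \ref{thm.uniform_convergence_of_rho} upgrades to uniformity using Doob's maximal inequality for the branching martingale and Burkholder--Davis--Gundy for the stochastic integrals, feeding the pointwise bound back into $\int_0^t(\rho_s^n-\rho_s)(A\varphi)\,ds$ and $\int_0^t(\rho_s^n-\rho_s)(h\varphi)\,dY_s$; this two-pass structure (and the loss from $\|\varphi\|_{m,\infty}$ to $\|\varphi\|_{m+2,\infty}$ through $A\varphi$) is absent from your sketch. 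Second, your normalisation identity divides by $\rho_t^n(\mathbf 1)$, which would require moment control on $(\rho_t^n(\mathbf 1))^{-1}$ uniformly in $n$; the paper instead writes $\pi_t^n(\varphi)-\pi_t(\varphi)=\bigl(\rho_t^n(\varphi)-\rho_t(\varphi)\bigr)(\rho_t(\mathbf 1))^{-1}-\pi_t^n(\varphi)\bigl(\rho_t^n(\mathbf 1)-\rho_t(\mathbf 1)\bigr)(\rho_t(\mathbf 1))^{-1}$, so that only $\tilde{\mathbb E}[(\rho_t(\mathbf 1))^{-2}]<\infty$ (a known fact about the limit) is needed. You should rearrange accordingly.
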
 

\section{The Approximating System with Gaussian Mixtures}\label{gaussian}

For ease of notations, we assume, hereinafter from this section, that the state space of the signal is one-dimensional. The approximating algorithm discussed in this section, together with the $L^2$-convergence analysis in Section 4 are done under this assumption. We note that all the results hereinafter can be extended without significant technical difficulties to the multi-dimensional case.

Firstly, we let $\Delta=\{0=\delta_0<\delta_1<\cdots<\delta_N=T\}$ be an equidistant partition of the interval $[0,T]$ with equal length,
with $\delta_i=i\delta,\ i=1,\ldots,N$; and $N=\frac{T}{\delta}$. The approximating algorithm is then introduced as follows.

\textbf{Initialisation}: At time zero, the particle system consists of $n$ Gaussian measures  all with equal weights $1/n$, initial means $v_j^n(0)$, and initial variances $\omega_j^n(0)$, for $j=1,\ldots,n$; denoted by 
$\Gamma_{v_j^n(0),\omega_j^n(0)}$. 
The approximation of $\pi_0$ has the form
\begin{equation}
\pi_0^n\triangleq{1\over n}\sum_{j=1}^n\Gamma_{v_j^n(0),\omega_j^n(0)},
\end{equation}

\textbf{Recursion}: During the interval $t\in[i\delta,(i+1)\delta)$, $i=1,\ldots,N,$ the approximation $\rho^n$ of the unnormalised conditional distribution $\rho$ will take the form
\begin{equation}\label{eq.gaussian_mixture_approximation}
\pi_t^n\triangleq\sum_{j=1}^n\bar a_j^n(t)\Gamma_{v_j^n(t),\omega_j^n(t)},
\end{equation}
where $v_j^n(t)$ denotes the mean and $\omega_j^n(t)$ denotes the variance of the Gaussian measure $\Gamma_{v_j^n(t),\omega_j^n(t)}$, and $a_j^n(t)$ is the (unnormalised) weight of the particle, and $$\bar a_j^n(t)=\frac{a_j^n(t)}{\sum_{k=1}^na_k^n(t)}$$
is the normalised weight. Obviously, each particle is characterised by the triple process $(a_j^n,v_j^n,\omega_j^n)$ which is chosen to evolve as
\begin{equation}\label{eq.appro_with_variance}
\left\{
\begin{array}{lll}
a_j^n(t)=1+\int_{i\delta}^t a_j^n(s)h(v_j^n(s))dY_s,\\
v_j^n(t)=v_j^n(i\delta)+\int_{i\delta}^tf\left(v_j^n(s)\right)ds
+\sqrt{1-\alpha}\int_{i\delta}^t\sigma\left(v_j^n(s)\right)dV_s^{(j)},\\
 \omega_j^n(t)=\alpha\left(\beta+\int_{i\delta}^t\sigma^2\left(v_j^n(s)\right)ds\right),
\end{array}
\right.
\end{equation}
where $\{{V^{(j)}}\}_{j=1}^n$ are mutually independent Brownian motions and  independent of $Y$. The parameter $\alpha$ is a real number in the interval $[0,1]$. For $\alpha=0$ we recover the classic particle approximation (see, for example, Chapter 9 in \cite{Bain and Crisan}); for $\alpha=1$ the mean of the Gaussian measures evolve deterministically (the stochastic term is eliminated). The parameter $\beta$ is a positive real number, which we call the \emph{smoothing parameter}, ensures that the approximating measure has smooth density at the branching time.

\textbf{Correction}: at the end of the interval $[i\delta,(i+1)\delta)$, immediately prior to the correction step, each Gaussian measure is replaced by a random number of offsprings, which are Gaussian measures with mean $X_j^n((i+1)\delta)$ and variance $\alpha\beta$, where the mean $X_j^n$ is a normally distributed random variable, i.e.
$$
X_j^n((i+1)\delta)\sim\mathcal N\left(v_j^n(i+1)\delta_-,\omega_j^n(i+1)\delta_-\right),\quad j=1,\ldots,n.
$$ 
We denote by $o_j^{n,(i+1)\delta}$ the number of ``offsprings'' produced by $j$th generalised particle. The total number of offsprings is fixed to be $n$ at each correcting event.

After correction all the particles are re-indexed from 1 to $n$ and all of
the unnormalised weights are re-initialised back to 1; and the particles
evolve following \eqref{eq.appro_with_variance} again. 
The recursion is repeated $N$ times until we reach the terminal time $T$,
where we obtain the approximation $\pi_T^n$ of $\pi_T$.

Before discussing how the correction step is actually carried out, we give here a brief explanation why we should introduce it. As time increases, the unnormalised weights of the majority of the particles decrease to zero, with only few becoming very large (or equivalently, the normalised weights of the majority of the particles decrease to zero, with only few becoming close to one), this phenomenon is called the \emph{sample degeneracy}.  As a consequence, only a small number of particles contribute significantly to the approximations, and therefore a large number of particles are needed in order to obtain the required accuracy; in other words, the convergence of this approximation is very slow. In order to solve this, a correction procedure is used which culls particles with small weights and multiplies particles with large weights.
The resampling depends both on the weights of the particles and the observation data, and by doing this particles with small weights (and hence their trajectories are far from the signal) are not carried forward and therefore the more likely region where the signal might be can be explored.

In the following we discuss two correction mechanisms. The first one uses the so called Tree Based Branching Algorithm (TBBA) and the second one is based on the Multinomial Resampling to determine the number of offsprings $\{o_j^n\}_{j=1}^n$.\\

The Tree Based Branching Algorithm (see Chapter 9 in \cite{Bain and Crisan}) produces offsprings $\{o_j^n\}$ with distribution
\begin{equation}\label{eq.number_of_offspring}
o_{j}^{n,(i+1)\delta}=\left\{
\begin{array}{lll}
\left[n\bar a_{j}^{n,(i+1)\delta}\right]\qquad\text{with prob.}\quad1-\{n\bar a_{j}^{n,(i+1)\delta}\}\\
\\
\left[n\bar a_{j}^{n,(i+1)\delta}\right]+1\qquad\text{with prob.}\quad\{n\bar a_{j}^{n,(i+1)\delta}\};
\end{array}
\right.
\end{equation}
where $\bar a_j^{n,(i+1)\delta}$ is the value of the Gaussian particle's weight immediately prior to the branching, in other words,
$$
\bar a_j^{n,(i+1)\delta}=\bar a_j^n((i+1)\delta-)=\lim_{t\nearrow(i+1)\delta}\bar a_j^n(t).
$$
If $\mathcal F_{(i+1)\delta-}$ is the $\sigma$-algebra of events up to time $(i+1)\delta$, i.e.
$\mathcal F_{(i+1)\delta-}=\sigma(\mathcal F_s:s<(i+1)\delta),$
then we have the following proposition (see Chapter 9 in \cite{Bain and Crisan}).
\begin{proposition}\label{prop.tbba_proposition} 
The random variables $\{o_j^n\}_{j=1}^n$ defined in \eqref{eq.number_of_offspring} have the following properties
\begin{align}\label{eq.expectation_of_offspring}
&\mathbb{E}\left[o_{j'}^{n,(i+1)\delta}\vert\mathcal{F}_{(i+1)\delta-}\right]=n\bar a_{j'}^{n,(i+1)\delta},\nonumber\\
&\mathbb{E}\left[\left(o_{j}^{n,(i+1)\delta}-n\bar a_j^{n,(i+1)\delta}\right)^2\big|\mathcal F_{(i+1)\delta-}\right]=
\left\{n\bar a_{j}^{n,(i+1)\delta}\right\}\left(1-\left\{n\bar a_{j}^{n,(i+1)\delta}\right\}\right).
\nonumber
\end{align}
\end{proposition}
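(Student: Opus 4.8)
The plan is to treat this as a direct conditional computation for a two-point (Bernoulli-type) random variable, exploiting the fact that every quantity appearing in the offspring law \eqref{eq.number_of_offspring} is frozen once we condition on $\mathcal F_{(i+1)\delta-}$. The essential preliminary observation is a measurability one: since $n\bar a_j^{n,(i+1)\delta}=\lim_{t\nearrow(i+1)\delta}n\bar a_j^n(t)$ is by definition a left limit, it is $\mathcal F_{(i+1)\delta-}$-measurable, and hence so are its integer part $[n\bar a_j^{n,(i+1)\delta}]$ and its fractional part $\{n\bar a_j^{n,(i+1)\delta}\}$. This licenses treating these two quantities as deterministic constants inside every conditional expectation $\mathbb E[\,\cdot\mid\mathcal F_{(i+1)\delta-}]$ below. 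Writing $k=[n\bar a_j^{n,(i+1)\delta}]$ and $p=\{n\bar a_j^{n,(i+1)\delta}\}$, so that $k+p=n\bar a_j^{n,(i+1)\delta}$ by the defining relation between integer and fractional parts, the offspring count $o_j^{n,(i+1)\delta}$ equals $k$ with conditional probability $1-p$ and $k+1$ with conditional probability $p$.

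For the first identity I would simply evaluate the conditional mean against this two-point law:
\begin{equation*}
\mathbb E\!\left[o_j^{n,(i+1)\delta}\mid\mathcal F_{(i+1)\delta-}\right]=k(1-p)+(k+1)p=k+p=n\bar a_j^{n,(i+1)\delta},
\end{equation*}
which is the asserted unbiasedness of the branching mechanism. For the second identity I would first record the centred increment on each of the two events: on the event of conditional probability $1-p$ one has $o_j^{n,(i+1)\delta}-n\bar a_j^{n,(i+1)\delta}=k-(k+p)=-p$, and on the event of conditional probability $p$ one has $o_j^{n,(i+1)\delta}-n\bar a_j^{n,(i+1)\delta}=(k+1)-(k+p)=1-p$. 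Squaring and taking the conditional expectation then gives
\begin{equation*}
\mathbb E\!\left[\left(o_j^{n,(i+1)\delta}-n\bar a_j^{n,(i+1)\delta}\right)^2\mid\mathcal F_{(i+1)\delta-}\right]=p^2(1-p)+(1-p)^2p=p(1-p),
\end{equation*}
which is exactly $\{n\bar a_j^{n,(i+1)\delta}\}\bigl(1-\{n\bar a_j^{n,(i+1)\delta}\}\bigr)$ after substituting back $p=\{n\bar a_j^{n,(i+1)\delta}\}$.

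Since both computations are elementary algebraic manipulations of a distribution supported on two consecutive integers, there is no genuine analytic obstacle here; the result is the standard minimal-variance branching construction recalled from Chapter 9 of \cite{Bain and Crisan}. The only point that deserves explicit attention, and the one I would be careful to state, is the measurability step in the opening paragraph, which is what allows $k$ and $p$ to be pulled outside the conditional expectation as constants. Everything else is a verification of the two displayed identities by direct enumeration of the two outcomes.
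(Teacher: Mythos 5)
Your proof is correct, and it is exactly the standard two-point computation that the paper itself does not reproduce but instead defers to Proposition 9.3 and Chapter 9 of \cite{Bain and Crisan}. The measurability remark about $[n\bar a_j^{n,(i+1)\delta}]$ and $\{n\bar a_j^{n,(i+1)\delta}\}$ being $\mathcal F_{(i+1)\delta-}$-measurable is the right point to make explicit, and the algebra $k(1-p)+(k+1)p=k+p$ and $p^2(1-p)+(1-p)^2p=p(1-p)$ is exactly what is needed.
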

\begin{remark}
The random variables $o_j^{n,(i+1)\delta}$ defined \eqref{eq.number_of_offspring} have conditional minimal variance in the set of all integer-valued random variables $\xi$ satisfying $\mathbb E[\xi|\mathcal F_{(i+1)\delta-}]=n\bar a_j^{n,(i+1)\delta}$. This property is important as it is the variance of $o_j^n$ that influences the speed of the corresponding algorithm (see Exercise 9.1 in \cite{Bain and Crisan}).
\end{remark}
In addition the TBBA keeps the number of particles in the system constant at $n$; that is, for each $i$,
\begin{equation}\label{eq.fix_total_offspring_number}
\sum_{j=1}^no_j^{n,(i+1)\delta}=n.
\end{equation}
The TBBA is, for example, discussed in Section 9.2.1 in \cite{Bain and Crisan} to ensure \eqref{eq.fix_total_offspring_number} is satisfied, and by Proposition 9.3 in \cite{Bain and Crisan} we know that the distribution of $o_j^{n}$ satisfies \eqref{eq.number_of_offspring} and Proposition \ref{prop.tbba_proposition}.\\

If multinomial resampling is used (see, for example, \cite{Obanubi}), 
then the offspring distribution is determined by the multinomial distribution
$$O_{(i+1)\delta}=\text{Multinomial}(n,\bar a_1^n((i+1)\delta_-),\ldots,\bar a_n^n((i+1)\delta_-)),$$
i.e.
\begin{equation}
\mathbb P\left(O_{(i+1)\delta}^{(j)}=o_j^{n,(i+1)\delta},j=1,\ldots,n\right)=
\frac{n!}{\prod_{j=1}^no_j^{n,(i+1)\delta}!}\prod_{j=1}^n\left(\bar a_j^n((i+i)\delta-)\right)^{o_j^{n,(i+1)\delta}}
\end{equation}
with $\sum_{j=1}^no_j^{n,(i+1)\delta}=n$.

By properties of the multinomial distribution, we have the following result (see, for example, \cite{Li}).
\begin{proposition}\label{prop.multinomial_proposition}
At branching time $(i+1)\delta$, $\left\{O_{(i+1)\delta}^{(j)}=o_j^{n,(i+1)\delta}\right\}_{j=1}^n$ has a multinomial distribution, then the conditional mean is proportional to the normalised weights of their parents:
\begin{align}
\tilde{\mathbb E}\left[o_j^{n,(i+1)\delta}\big|\mathcal F_{(i+1)\delta-}\right]=n\bar
a_{j'}^{n,(i+1)\delta}
\end{align}
for $1\leq j\leq n$; and the condition variance and covariance satisfy
\begin{align}
&\tilde{\mathbb E}\left[\left(o_{l}^{n,(i+1)\delta}-n\bar a_l^{n,(i+1)\delta}\right)\left(o_{j}^{n,(i+1)\delta}-n\bar a_j^{n,(i+1)\delta}\right)\Big|\mathcal F_{(i+1)\delta-}\right]\nonumber\\
&=\left\{
\begin{array}{lll}
n\bar a_{j}^{n,(i+1)\delta}\left(1-\bar a_{j}^{n,(i+1)\delta}\right),\quad l=j\\
-n\bar a_{l}^{n,(i+1)\delta}\bar a_{j}^{n,(i+1)\delta},\qquad\quad\  l\neq j
\end{array}
\right.
\end{align}
for $1\leq l,j\leq n$.
\end{proposition}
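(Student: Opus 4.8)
The plan is to exploit the standard representation of the multinomial distribution as a sum of independent categorical draws, carried out conditionally on $\mathcal F_{(i+1)\delta-}$. Since the normalised weights $\bar a_1^{n,(i+1)\delta},\ldots,\bar a_n^{n,(i+1)\delta}$ are all $\mathcal F_{(i+1)\delta-}$-measurable, under the conditional law $\tilde{\mathbb P}(\,\cdot\,\mid\mathcal F_{(i+1)\delta-})$ they may be treated as fixed probabilities summing to one. The multinomial sampling then amounts to performing $n$ conditionally independent trials $\xi_1,\ldots,\xi_n$ taking values in $\{1,\ldots,n\}$ with $\tilde{\mathbb P}(\xi_k=j\mid\mathcal F_{(i+1)\delta-})=\bar a_j^{n,(i+1)\delta}$, and writing the offspring count as
$$
o_j^{n,(i+1)\delta}=\sum_{k=1}^n\mathbf 1_{\{\xi_k=j\}},\qquad j=1,\ldots,n.
$$

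First I would establish the conditional mean. By linearity of the conditional expectation and the fact that each indicator $\mathbf 1_{\{\xi_k=j\}}$ is a Bernoulli variable with conditional success probability $\bar a_j^{n,(i+1)\delta}$, one obtains immediately
$$
\tilde{\mathbb E}\bigl[o_j^{n,(i+1)\delta}\mid\mathcal F_{(i+1)\delta-}\bigr]=\sum_{k=1}^n\tilde{\mathbb P}(\xi_k=j\mid\mathcal F_{(i+1)\delta-})=n\bar a_j^{n,(i+1)\delta},
$$
which is the asserted first-moment identity (the $j'$ appearing in the statement being a typographical slip for $j$).

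Next I would compute the second-order structure by expanding over the double sum of indicators. Writing $p_l=\bar a_l^{n,(i+1)\delta}$ and $p_j=\bar a_j^{n,(i+1)\delta}$, the covariance decomposes as
$$
\tilde{\mathbb E}\Bigl[\bigl(o_l^{n,(i+1)\delta}-np_l\bigr)\bigl(o_j^{n,(i+1)\delta}-np_j\bigr)\Bigm|\mathcal F_{(i+1)\delta-}\Bigr]=\sum_{k=1}^n\sum_{k'=1}^n\mathrm{Cov}\bigl(\mathbf 1_{\{\xi_k=l\}},\mathbf 1_{\{\xi_{k'}=j\}}\bigr),
$$
where each covariance is understood conditionally on $\mathcal F_{(i+1)\delta-}$. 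The off-diagonal terms $k\neq k'$ vanish by the conditional independence of distinct trials. On the diagonal $k=k'$, when $l=j$ each term equals $p_j(1-p_j)$, giving $np_j(1-p_j)$ after summation; when $l\neq j$ the events $\{\xi_k=l\}$ and $\{\xi_k=j\}$ are mutually exclusive, so $\mathbf 1_{\{\xi_k=l\}}\mathbf 1_{\{\xi_k=j\}}=0$ and each diagonal term reduces to $-p_lp_j$, yielding $-np_lp_j$ after summation. Substituting back the weights delivers the two cases of the claimed variance--covariance formula.

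The argument is entirely elementary, and there is no genuine obstacle beyond bookkeeping. The only point requiring care is the conditioning: one must verify that the weights are $\mathcal F_{(i+1)\delta-}$-measurable so that the categorical representation is legitimate under the conditional measure, and that the $n$ draws are conditionally independent given $\mathcal F_{(i+1)\delta-}$ --- both of which follow directly from the construction of the multinomial resampling step described above.
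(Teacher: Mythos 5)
Your proof is correct. The paper does not actually prove this proposition --- it simply invokes ``properties of the multinomial distribution'' and cites Li's thesis --- and your conditional indicator-sum decomposition is exactly the standard derivation of those properties, including the correct observation that the weights are $\mathcal F_{(i+1)\delta-}$-measurable and that the $j'$ in the stated mean is a typographical slip for $j$.
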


The multinomial resampling algorithm essentially states that, at branching times, we sample $n$ times (with replacement) from the population of Gaussian random variables $X_j^n((i+1)\delta)$ (with means $v_j^n((i+1)\delta_-)$ and variances $\omega_j^n((i+1)\delta_-)),j=1,\ldots,n$ according to the multinomial probability distribution given by the corresponding normalised weights $\bar a_j^n((i+1)\delta_-),j=1,\ldots,n$. Therefore, by definition of multinomial distribution, $o_j^{n,(i+1)\delta}$ is the number of times $X_j^n((i+1)\delta)$ is chosen at time $(i+1)\delta$; that is to say, $o_j^{n,(i+1)\delta}$ is the number of offspring produced by this Gaussian random variable.

\section{Convergence Analysis}
In this section we deduce the evolution equation of the approximating measure $\rho^n$ for the generalised particle filters with Gaussian mixtures, and show its convergence to the target measure $\rho$ -- the solution of the Zakai equation, as well as the convergence of $\pi^n$ to $\pi$ -- the solution of the Kushner-Stratonovich equation. The correction mechanism for the generalised particle system involves either the use of the Tree Based Branching Algorithm (TBBA) or the multinomial resampling algorithm. These will be investigated in Sections 4.2 and 4.3 respectively. 

\subsection{Evolution Equation for $\rho^n$}
We firstly define the process $\xi^n=\{\xi_t^n;t\geq0\}$ by
$$
\xi_t^n\triangleq\left(\prod_{i=1}^{[t/\delta]}\frac{1}{n}\sum_{j=1}^na_j^{n,i\delta}\right)\left(\frac{1}{n}\sum_{j=1}^na_j^n(t)\right).
$$ 
Then $\xi^n$ is a martingale and by Exercise 9.10 in \cite{Bain and Crisan} we know for any $t\geq0$ and $p\geq1$, there exist two constants $c_1^{t,p}$ and $c_2^{t,p}$ which depend only on $t$, $p$, and $\max_{k=1,\ldots,m}\|h_k\|_{0,\infty}$, such that
\begin{equation}
\sup_{n\geq0}\sup_{s\in[0,t]}\tilde{\mathbb{E}}\left[(\xi_s^n)^p\right]\leq c_1^{t,p},\qquad\text{}
\end{equation} 
and
\begin{equation}\label{eq.bound_for_xi_2}
\max_{j=1,\ldots,n}\sup_{n\geq0}\sup_{s\in[0,t]}\tilde{\mathbb{E}}\left[(\xi_s^na_j^n(s))^p\right]\leq c_2^{t,p}.
\end{equation}
We use the martingale $\xi^n$ to linearise $\pi^n$ in order to make it easier to analyse its convergence. Let $\rho^n=\{\rho_t^n;t\geq0\}$ be the measure-valued process defined by
\begin{equation}\label{definition_of_rho}
\rho_t^n\triangleq\xi_t^n\pi_t^n=\frac{\xi_{[t/\delta]\delta}}{n}\sum_{j=1}^na_j^n(t)\Gamma_{v_j^n(t),\omega_j^n(t)},
\end{equation}
where $\Gamma_{v_j^n(t),\omega_j^n(t)}$ is the Gaussian measure with mean $v_j^n(t)$ and variance $\omega_j^n(t)$. We will show the convergence of $\rho^n$ to $\rho$ as the number of generalised particles $n$ increases.

The following proposition describes the evolution equation satisfied by the approximating sequence $\rho^n=\{\rho_t^n;t\geq0\}$ constructed using the algorithm described in the previous section. 
As discussed in Section 3, the approximation algorithm is constructed for the case where the state space of the signal process $X$ is $\mathbb R$. We adopt this assumption in this and the following sections. We first introduce the following notations:

\begin{align}
R_{s,j}^1(\varphi)=&\omega_j^n(s)\left[\frac{1}{2}(f\varphi''')(v_j^n(s))+\frac{\alpha}{4}(\sigma\varphi^{(4)})(v_j^n(s))+2\alpha\sigma^2(v_j^n(s))I_{4,j}^{(4)}(\varphi)-I_j(A\varphi)\right]\nonumber\\
+&(\omega_j^n(s))^2\left[f(v_j^n(s))I_{4,j}^{(5)}(\varphi)+\frac{\alpha\sigma^2(v_j^n(s))}{2\sqrt{\omega_j^n(s)}}I_{5,j}(\varphi)+\frac{1-\alpha}{2}\sigma^2(v_j^n(s))I_{4,j}^{(6)}(\varphi)\right],\label{eq.R^1} \\
R_{s,j}^2(\varphi)=&\omega_j^n(s)\left[\frac{1}{2}h(v_j^n(s))\varphi''(v_j^n(s))-I_j(h\varphi)\right]
+(\omega_j^n(s))^2h(v_j^n(s))I_{4,j}^{(4)}(\varphi),\label{eq.R^2} \\
R_{s,j}^3(\varphi)=&\sqrt{1-\alpha}\Bigg[\sigma(v_j^n(s))\varphi'(v_j^n(s))+\frac{1}{2}\omega_j^n(s)\sigma(v_j^n(s))\varphi'''(v_j^n(s))\nonumber\\
&\qquad\quad+(\omega_j^n(s))^2\sigma(v_j^n(s))I_{4,j}^{(5)}(\varphi)\Bigg]; \label{eq.R^3}
\end{align}
\begin{align}
I_{4,j}^{(k)}(\varphi)=&\int_{\mathbb{R}}\frac{y^4e^{\frac{-y^2}{2}}}{\sqrt{2\pi}}\int_0^1\varphi^{(k)}\left(v_j^n(s)+uy\sqrt{\omega_j^n(s)}\right)\frac{(1-u)^3}{6}dudy,\quad\text{for}\
k=4,5,6;\nonumber\\
I_{5,j}(\varphi)=&\int_{\mathbb{R}}\frac{y^5e^{\frac{-y^2}{2}}}{\sqrt{2\pi}}\int_0^1\varphi^{(5)}\left(v_j^n(s)+uy\sqrt{\omega_j^n(s)}\right)\frac{u(1-u)^3}{6}dudy;\nonumber\\
I_j(\psi)=&\int_{\mathbb{R}}\frac{y^2e^{\frac{-y^2}{2}}}{\sqrt{2\pi}}\int_0^1(\psi)''\left(v_j^n(s)+uy\sqrt{\omega_j^n(s)}\right)(1-u)dudy,\quad\text{for}\
\psi=A\varphi,h\varphi.\nonumber
\end{align}

\begin{proposition}\label{prop.equation_for_rho_t_n}
The measure-valued process $\rho^n=\{\rho_t^n:t\geq0\}$ satisfies the following evolution equation:
\begin{align}\label{eq.integral_form_of_rho_t_n_varphi}
\rho_t^n(\varphi)& = \rho_0^n(\varphi)+\int_0^t\rho_s^n(A\varphi)ds+\int_0^t\rho_s^n(h\varphi)dY_s+M_{[t/\delta]}^{n,\varphi}+B_t^{n,\varphi}
\end{align}
for any $\varphi\in C_b^m(\mathbb{R})$ and $t\in[0,T]$ with $m\geq6$.
In \eqref{eq.integral_form_of_rho_t_n_varphi},
$M^{n,\varphi}=\{M_i^{n,\varphi},i>0 \ and \ i\in\mathbb{N}\}$ is the discrete process
\begin{align}\label{eq.M_n_varphi}
&M_{[t/\delta]}^{n,\varphi}=\frac{1}{n}\sum_{i=0}^{[t/\delta]}\xi_{i\delta}^n\sum_{j=1}^n\Bigg[o_{j}^{n,i\delta}
\int_{\mathbb R}\varphi(x)\frac{e^{-\frac{(x-X_{j}^n(i\delta))^2}{2\alpha\beta}}}{\sqrt{2\pi\alpha\beta}}dx
-n\bar a_{j}^n(i\delta-)\int_{\mathbb{R}}\varphi(x)
\frac{e^{-\frac{(x-v_{j}^n(i\delta-))^2}{2\omega_{j}^n(i\delta-)}}}{\sqrt{2\pi\omega_{j}^n(i\delta-)}}dx\Bigg]
\end{align}
where $X_{j}^n(i\delta)\sim N(v_{j}^n(i\delta-),\omega_{j}^n(i\delta-))$ is a Gaussian random variable. Also in \eqref{eq.integral_form_of_rho_t_n_varphi},
$B_t^{n,\varphi}$ is the following process:
$$
B_t^{n,\varphi}=\frac{1}{n}\sum_{j=1}^n\int_{0}^{t}\xi_{[s/\delta]\delta}^na_j^n(s)\Big[R_{s,j}^1(\varphi)ds+R_{s,j}^2(\varphi)dY_s+R_{s,j}^3(\varphi)dV_s^{(j)}\Big].
$$
\end{proposition}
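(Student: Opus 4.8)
The plan is to derive \eqref{eq.integral_form_of_rho_t_n_varphi} by treating separately the inter-branching intervals $[i\delta,(i+1)\delta)$, on which the triple $(a_j^n,v_j^n,\omega_j^n)$ evolves smoothly according to \eqref{eq.appro_with_variance}, and the branching times $i\delta$, at which $\rho^n$ jumps, and then to assemble the two contributions. Throughout I write $g(v,\omega)=\int_{\mathbb R}\varphi(x)\Gamma_{v,\omega}(dx)=\int_{\mathbb R}\varphi(v+\sqrt\omega\,y)\tfrac{e^{-y^2/2}}{\sqrt{2\pi}}\,dy$, so that on $[i\delta,(i+1)\delta)$ one has $\rho_t^n(\varphi)=\frac{\xi_{i\delta}^n}{n}\sum_{j=1}^n a_j^n(t)\,g\big(v_j^n(t),\omega_j^n(t)\big)$ with $\xi^n$ constant on the interval. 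The two facts I would use repeatedly are that the Gaussian density solves the heat equation, giving the exact identity $\partial_\omega g=\tfrac12\partial_{vv}g$, and that under $\tilde{\mathbb P}$ the processes $Y$ and the $V^{(j)}$ are mutually independent Brownian motions, so that all cross-variations between the weight dynamics and the mean/variance dynamics vanish.

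First I would apply It\^o's formula to $a_j^n(t)\,g\big(v_j^n(t),\omega_j^n(t)\big)$. Since $d a_j^n=a_j^n h(v_j^n)\,dY$, $d v_j^n=f(v_j^n)\,dt+\sqrt{1-\alpha}\,\sigma(v_j^n)\,dV^{(j)}$ and $d\omega_j^n=\alpha\sigma^2(v_j^n)\,dt$ is of finite variation (so $\omega_j^n$ contributes neither quadratic variation nor a cross term), the product rule, together with $\partial_\omega g=\tfrac12\partial_{vv}g$ and $d\langle v_j^n\rangle=(1-\alpha)\sigma^2(v_j^n)\,dt$, gives
\begin{align*}
d\big(a_j^n g\big)=a_j^n h(v_j^n)\,g\,dY+a_j^n\Big[f(v_j^n)\partial_v g+\tfrac12\sigma^2(v_j^n)\partial_{vv}g\Big]dt+a_j^n\sqrt{1-\alpha}\,\sigma(v_j^n)\partial_v g\,dV^{(j)}.
\end{align*}
Summing over $j$ and multiplying by $\xi_{i\delta}^n/n$ produces a $dY$-integrand, a $dt$-drift and a $dV^{(j)}$-martingale part, which I must now reconcile with $\rho_s^n(A\varphi)\,ds+\rho_s^n(h\varphi)\,dY_s$.

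The core of the argument is this reconciliation. Since $\partial_v g=\int\varphi'\,d\Gamma$ and $\partial_{vv}g=\int\varphi''\,d\Gamma$, I would Taylor-expand, with integral remainder, the Gaussian integrals $g,\partial_v g,\partial_{vv}g$ about the mean $v_j^n$ and compare them with the analogous expansions of $\int(A\varphi)\,d\Gamma_{v_j^n,\omega_j^n}$ and $\int(h\varphi)\,d\Gamma_{v_j^n,\omega_j^n}$; the odd Gaussian moments vanish, leaving remainders scaled by $\omega_j^n$ and $(\omega_j^n)^2$. For example $g=\varphi(v_j^n)+\tfrac{\omega_j^n}2\varphi''(v_j^n)+(\omega_j^n)^2 I_{4,j}^{(4)}(\varphi)$ and $\int(h\varphi)\,d\Gamma=h(v_j^n)\varphi(v_j^n)+\omega_j^n I_j(h\varphi)$, whose difference is exactly $R_{s,j}^2(\varphi)$; the same bookkeeping applied to the $dt$-drift produces $R_{s,j}^1(\varphi)$ and applied to $\sqrt{1-\alpha}\,\sigma(v_j^n)\partial_v g$ produces $R_{s,j}^3(\varphi)$, so that, carrying the prefactor $\xi_{[s/\delta]\delta}^n a_j^n(s)/n$ and integrating over each interval, these terms assemble into $B_t^{n,\varphi}$ while the leading terms reassemble $\int_0^t\rho_s^n(A\varphi)\,ds+\int_0^t\rho_s^n(h\varphi)\,dY_s$. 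For the jumps I would use the resetting of the weights to $1$ and the definition of $\xi^n$ to verify the algebraic identities $\rho_{i\delta-}^n(\varphi)=\frac{\xi_{i\delta}^n}{n}\sum_j n\bar a_j^n(i\delta-)\int\varphi\,d\Gamma_{v_j^n(i\delta-),\omega_j^n(i\delta-)}$ and, grouping the $o_j^{n,i\delta}$ offspring of each parent (all with mean $X_j^n(i\delta)$ and variance $\alpha\beta$), $\rho_{i\delta}^n(\varphi)=\frac{\xi_{i\delta}^n}{n}\sum_j o_j^{n,i\delta}\int\varphi\,d\Gamma_{X_j^n(i\delta),\alpha\beta}$; their difference is precisely the $i$-th increment of $M^{n,\varphi}$ in \eqref{eq.M_n_varphi}. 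Summing the continuous increments over the intervals met by $[0,t]$ and the jumps over the branching times in $[0,t]$ then yields \eqref{eq.integral_form_of_rho_t_n_varphi}.

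The main obstacle is the middle step, the Taylor/moment bookkeeping that pins down the exact forms of $R^1,R^2,R^3$. In particular I must keep the variance-drift contribution $\alpha\sigma^2(v_j^n)\,\partial_\omega g$ separate from the quadratic-variation contribution $\tfrac12(1-\alpha)\sigma^2(v_j^n)\,\partial_{vv}g$ and expand the former through the representation $\partial_\omega g=\int\varphi'(v+\sqrt\omega\,y)\tfrac{y}{2\sqrt\omega}\tfrac{e^{-y^2/2}}{\sqrt{2\pi}}\,dy$, which is what generates the $\alpha$-weighted terms and the odd-moment remainder $I_{5,j}$ with its $1/\sqrt{\omega_j^n}$ factor. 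Getting every coefficient of this expansion right, together with the treatment of the boundary ($i=0$) term, is where the care lies; the remaining ingredients are the elementary interchange of the finite sum $\sum_{j=1}^n$ with the stochastic integrals and the piecewise constancy of $\xi^n$ inside each interval.
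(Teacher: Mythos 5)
Your proposal is correct and follows essentially the same route as the paper's proof: It\^o's formula applied to the Gaussian-integral representation of $\rho_t^n(\varphi)$ on each inter-branching interval, Taylor expansion with integral remainder (using the vanishing of odd Gaussian moments) to identify the leading terms with $\rho_s^n(A\varphi)\,ds+\rho_s^n(h\varphi)\,dY_s$ and collect the remainders into $R^1,R^2,R^3$, and the telescoping jump decomposition at branching times to produce $M^{n,\varphi}$. The only difference is cosmetic — the paper Taylor-expands first and then applies It\^o to the expanded expression, whereas you apply It\^o to the exact functional $g(v,\omega)$ and expand afterwards via $\partial_\omega g=\tfrac12\partial_{vv}g$ — and both orderings yield the same terms.
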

\begin{proof}
For any $\varphi\in C_b^m(\mathbb{R})$ and $t\in[i\delta,(i+1)\delta)$, we have from \eqref{definition_of_rho} that
\begin{align}\label{eq.rho_t_n_varphi}
\rho_t^n(\varphi)
&=\frac{\xi_{i\delta}^n}{n}\sum_{j=1}^na_j^n(t)\int_{\mathbb{R}}\varphi\left(v_j^n(t)+y\sqrt{\omega_j^n(t)}\right)\frac{1}{\sqrt{2\pi}}\exp\left(-\frac{y^2}{2}\right)dy,
\end{align}
with similar formulas for $A\varphi$ and $h\varphi$. We have the following Taylor expansions
\begin{align}\label{eq.taylor_expansion1}
\psi\left(v_j^n(t)+y\sqrt{\omega_j^n(t)}\right)&=\sum_{k=0}^{2p-1}
\frac{y^k}{k!}(\omega_j^n(t))^\frac{k}{2}\psi^{(k)}(v_j^n(t))\nonumber\\
&+y^{2p}\left(\omega_j^n(t)\right)^p\int_0^1\frac{1}{(2p)!}\psi^{(2p)}\left(v_j^n(t)+uy\sqrt{\omega_j^n(t)}\right)(1-u)^{2p-1}du,
\end{align}
where $\psi$ can be $\varphi$, $A\varphi$, or $h\varphi$.

\noindent By applying \eqref{eq.taylor_expansion1} (for $p=2$ and $p=1$) to \eqref{eq.rho_t_n_varphi} and the similar identities for $A\varphi$ and $h\varphi$, noting the fact that for any $k\geq1$ and $k\in\mathbb{N}$,
$$
\int_{\mathbb{R}}y^{2k-1}\frac{1}{\sqrt{2\pi}}\exp(-\frac{y^2}{2})dy=0,\quad
\int_{\mathbb{R}}y^{2k}\frac{1}{\sqrt{2\pi}}\exp(-\frac{y^2}{2})dy=\prod_{j=1}^k(2j-1),
$$
we obtain that
\begin{equation}\label{eq.taylored_rho_t_n_varphi}
\rho_t^n(\varphi)=\frac{\xi_{i\delta}^n}{n}\sum_{j=1}^na_j^n(t)\left[\varphi(v_j^n(t))+\frac{1}{2}\omega_j^n(t)\varphi''(v_j^n(t))\right]
+\frac{\xi_{i\delta}^n}{n}\sum_{j=1}^na_j^n(t)\left(\omega_j^n(t)\right)^2I_{4,j}^{(4)}(\varphi);
\end{equation}
\begin{equation}\label{eq.taylored_rho_t_n_Avarphi}
\rho_t^n(A\varphi)=\frac{\xi_{i\delta}^n}{n}\sum_{j=1}^na_j^n(t)\left[\left(A\varphi\right)(v_j^n(t))\right]
+\frac{\xi_{i\delta}^n}{n}\sum_{j=1}^na_j^n(t)\omega_j^n(t)I_j(A\varphi);
\end{equation}
\begin{equation}\label{eq.taylored_rho_t_n_hvarphi}
\rho_t^n(h\varphi)=\frac{\xi_{i\delta}^n}{n}\sum_{j=1}^na_j^n(t)\left[\left(h\varphi\right)(v_j^n(t))\right]
+\frac{\xi_{i\delta}^n}{n}\sum_{j=1}^na_j^n(t)\omega_j^n(t)I_j(h\varphi).
\end{equation}
Next we apply It\^o's formula to equation \eqref{eq.taylored_rho_t_n_varphi}, with the particles satisfying equations \eqref{eq.appro_with_variance}. After substituting \eqref{eq.taylored_rho_t_n_Avarphi} and \eqref{eq.taylored_rho_t_n_hvarphi}, we have, for $t\in[i\delta,(i+1)\delta)$,
\begin{align}\label{eq.rho_between_branches}
\rho_t^n(\varphi)& = \rho_{i\delta}^n(\varphi)+\int_{i\delta}^t\rho_s^n(A\varphi)ds+\int_{i\delta}^t\rho_s^n(h\varphi)dY_s\nonumber\\
&+\int_{i\delta}^{t}\frac{1}{n}\sum_{j=1}^n\xi_{i\delta}^na_j^n(s)\Big[R_{s,j}^1(\varphi)ds+R_{s,j}^2(\varphi)dY_s+R_{s,j}^3(\varphi)dV_s^{(j)}\Big].
\end{align}

Let $\mathcal{F}_{i\delta-}=\sigma\left(\mathcal{F}_s,0\leq s<i\delta\right)$ be the $\sigma$-algebra of the events up to time $i\delta$ (the time of the $i$-th-branching) and $\rho_{i\delta-}^n=\lim_{t\nearrow i\delta}\rho_t^n$. For any $t\geq0$, we have\footnote{We use the standard convention $\sum_{k=1}^0=0$.}
\begin{equation}
\rho_t^n(\varphi)=\rho_0^n(\varphi)+\sum_{i=1}^{[t/\delta]}(\rho_{i\delta}^n(\varphi)-\rho_{i\delta-}^n(\varphi))+\sum_{i=1}^{[t/\delta]}(\rho_{i\delta-}^n(\varphi)-\rho_{(i-1)\delta}^n(\varphi))+(\rho_t^n(\varphi)-\rho_{[t/\delta]\delta}^n(\varphi)),
\end{equation}
At the $i$-th correction event, each Gaussian measure is replaced by a random number $(o_{j}^{n,i\delta})$ of offsprings. Each offspring is a Gaussian measure with mean $X_j^n(i\delta)$ and variance $\alpha\beta$, where $X_j^n(i\delta)\sim\mathcal N(v_j^n(i\delta-),\omega_j^n(i\delta-))$. The weights of the offspring generalised particles are re-initialised to $1$, i.e. $a_{j}^n(i\delta)=1$; hence $\bar a_j^{n}(i\delta)=1/n$. So
$$
\pi_{i\delta}^n=\frac{1}{n}\sum_{j=1}^no_{j}^{n,k\delta}\Gamma_{X_{j}^n(k\delta),\alpha\beta},\qquad \text{and}\qquad
\pi_{i\delta}^n(\varphi)=\frac{1}{n}\sum_{j=1}^no_{j}^{n,k\delta}\int_{\mathbb R}\varphi(x)\frac{e^{-\frac{(x-X_{j}^n(i\delta))^2}{2\alpha\beta}}}{\sqrt{2\pi\alpha\beta}}dx.
$$
Before the correction event, we have
$$
\rho_{i\delta-}^n(\varphi)={\xi_{i\delta}^n}\sum_{j=1}^n\bar a_{j}^n(i\delta-)\int_{\mathbb{R}}\varphi\left(v_{j}^n(i\delta-)+y\sqrt{\omega_{j}^n(i\delta-)}\right)\frac{1}{\sqrt{2\pi}}\exp\left(-\frac{y^2}{2}\right)dy.
$$
We then obtain 
\begin{align}\label{eq.M_n_varphi}
M_{t/\delta}^{n,\varphi}\triangleq&\sum_{i=0}^{[t/\delta]}\left(\rho_{i\delta}^n(\varphi)-\rho_{i\delta-}^n(\varphi)\right)\nonumber\\
=&\frac{1}{n}\sum_{i=0}^{[t/\delta]}\xi_{i\delta}^n\sum_{j=1}^n\Bigg[o_{j}^{n,i\delta}\int_{\mathbb R}\varphi(x)\frac{e^{-\frac{(x-X_{j}^n(i\delta))^2}{2\alpha\beta}}}{\sqrt{2\pi\alpha\beta}}dx
-n\bar a_{j}^n(i\delta-)\int_{\mathbb{R}}\varphi(x)
\frac{e^{-\frac{(x-v_{j}^n(i\delta-))^2}{2\omega_{j}^n(i\delta-)}}}{\sqrt{2\pi\omega_{j}^n(i\delta-)}}dx\Bigg].
\end{align}
For $t\in[(i-1)\delta,i\delta)$, for $i=1,2,\ldots,[t/\delta]$,
using \eqref{eq.rho_between_branches}, we obtain that
\begin{align}\label{eq.rho_t_n_varphi_minus_M}
&\sum_{i=1}^{[t/\delta]}(\rho_{i\delta-}^n(\varphi)-\rho_{(i-1)\delta}^n(\varphi))+(\rho_t^n(\varphi)-\rho_{i\delta}^n(\varphi))\nonumber\\
=&\rho_0^n(\varphi)+\int_0^t\rho_s^n(A\varphi)ds+\int_0^t\rho_s^n(h\varphi)dY_s
+\int_{0}^{t}\frac{1}{n}\sum_{j=1}^n\xi_{[s/\delta]\delta}^na_j^n(s)\Big[R_{s,j}^1(\varphi)ds+R_{s,j}^2(\varphi)dY_s+R_{s,j}^3(\varphi)dV_s^{(j)}\Big],
\end{align}
Finally, \eqref{eq.M_n_varphi} and \eqref{eq.rho_t_n_varphi_minus_M} imply \eqref{eq.integral_form_of_rho_t_n_varphi}, which completes the proof.
\end{proof}

\begin{corollary}
Under the same assumption as in Proposition \ref{prop.equation_for_rho_t_n}, if we further assume that $\alpha=0$ in \eqref{eq.appro_with_variance}, we recover the classic particle filters with mixture of Dirac measures.
\end{corollary}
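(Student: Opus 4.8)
The plan is to verify by direct substitution that setting $\alpha=0$ collapses every Gaussian ingredient of the approximating system to a point mass, so that the scheme of Section \ref{gaussian} becomes literally the classic particle filter. First I would substitute $\alpha=0$ into the three equations of \eqref{eq.appro_with_variance}. The variance equation immediately yields $\omega_j^n(t)=0$ for all $t$ and all $j$, so each component measure $\Gamma_{v_j^n(t),\omega_j^n(t)}$ degenerates to the Dirac mass $\delta_{v_j^n(t)}$, and consequently
$$
\pi_t^n=\sum_{j=1}^n\bar a_j^n(t)\,\delta_{v_j^n(t)},
$$
which is exactly the weighted empirical measure of the classic particle approximation. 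At the same time $\sqrt{1-\alpha}=1$, so the mean equation reduces to
$$
v_j^n(t)=v_j^n(i\delta)+\int_{i\delta}^t f(v_j^n(s))\,ds+\int_{i\delta}^t\sigma(v_j^n(s))\,dV_s^{(j)},
$$
which is precisely the one-dimensional signal dynamics \eqref{eq.signal_is_diffusion} restarted at $i\delta$ and driven by the independent Brownian motions $V^{(j)}$; the weight equation is left untouched by $\alpha$.

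Next I would examine the correction step. With $\alpha=0$ the offspring variance $\alpha\beta$ is zero, and the offspring mean $X_j^n((i+1)\delta)\sim\mathcal N(v_j^n((i+1)\delta-),\omega_j^n((i+1)\delta-))=\mathcal N(v_j^n((i+1)\delta-),0)$ is deterministic and equal to $v_j^n((i+1)\delta-)$. Hence each branching event replaces $\delta_{v_j^n((i+1)\delta-)}$ by $o_j^{n,(i+1)\delta}$ Dirac copies at the same location, each reweighted to $1/n$; this is precisely the resampling step of the standard TBBA or multinomial particle filter. Together with the previous paragraph, this identifies the algorithm with the classic scheme.

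Finally, for consistency I would check that the evolution equation \eqref{eq.integral_form_of_rho_t_n_varphi} degenerates to the known equation for the classic filter. Both remainder terms $R_{s,j}^1(\varphi)$ and $R_{s,j}^2(\varphi)$ in \eqref{eq.R^1}--\eqref{eq.R^2} carry an overall factor $\omega_j^n(s)$ (and $(\omega_j^n(s))^2$), so they vanish identically once $\omega_j^n\equiv0$; in \eqref{eq.R^3} only the first summand survives, leaving $R_{s,j}^3(\varphi)=\sigma(v_j^n(s))\varphi'(v_j^n(s))$, the familiar martingale integrand of the classic filter. The Taylor identity \eqref{eq.taylored_rho_t_n_varphi} likewise collapses to $\rho_t^n(\varphi)=\frac{\xi_{i\delta}^n}{n}\sum_j a_j^n(t)\varphi(v_j^n(t))$, reconfirming the Dirac structure. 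I do not anticipate a genuine obstacle here, since the corollary is a degenerate-limit verification rather than a new estimate; the only point requiring a word of care is that the auxiliary integrals $I_{4,j}^{(k)}$, $I_{5,j}$ and $I_j$ stay bounded as $\omega_j^n\to0$ (their integrands are controlled uniformly by $\|\varphi\|_{m,\infty}$), so the products $\omega_j^n\cdot I$ and $(\omega_j^n)^2\cdot I$ genuinely vanish rather than producing an indeterminate form.
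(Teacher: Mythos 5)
Your verification is correct and is exactly the argument the paper intends: the paper states this corollary without proof precisely because it follows by the direct substitution $\alpha=0\Rightarrow\omega_j^n\equiv0$ that you carry out, collapsing each $\Gamma_{v_j^n(t),\omega_j^n(t)}$ to $\delta_{v_j^n(t)}$ and killing the remainder terms $R^1,R^2$ while reducing $R^3$ to the classic martingale integrand. Your closing remark that the integrals $I_{4,j}^{(k)}$, $I_{5,j}$, $I_j$ remain bounded as $\omega_j^n\to0$ (so the products with $\omega_j^n$ genuinely vanish) is the right point of care and is all that needs checking.
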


\subsection{Convergence Results for Generalised Particle Filters using the TBBA}
In order to investigate the convergence of the approximating measure $\rho^n$, we consider the mild form of the Zakai equation. One should note that the proof of the convergence in \cite{Bain and Crisan} using the dual, $\psi_s^{t,\varphi}$, of the measure-valued process $\rho$ does not work for our model. $\psi_s^{t,\varphi}$ is measurable with respect to the backward filtration $\mathcal{Y}_s^t=\sigma(Y_t-Y_r,\ r\in[s,t])$, and so is $R_{s,j}^2(\psi_s^{t,\varphi})$; however, the It\^o's integral $\int_0^tR_{s,j}^2(\psi_s^{t,\varphi})dY_s$ requires $R_{s,j}^2(\psi_s^{t,\varphi})$ is measurable with respect to the forward filtration $\mathcal{Y}_s=\sigma(Y_r,\ r\in[0,s])$. This leads to an anticipative integration which cannot be tackled in a standard manner.  Another approach is therefore required. Markov semigroups were used in \cite{Obanubi} to obtain relevant bounds on the error which in turn enables us to discuss the convergence rate. In the following this idea will be discussed in some details.

Let $(P_r)_{r\geq0}$ be the Markov semigroup whose infinitesimal generator is the operator $A$ and $\varphi$ is the single variable function which does not depend on $t$. 
Then from \eqref{eq.integral_form_of_rho_t_n_varphi} for $t\in[0,s]$, we get that
\begin{align}\label{eq.integral_form_of_rho_t_n_varphi_semigroup}
&\rho_t^n(P_{s-t}\varphi)=\rho_0^n(P_s\varphi)+\int_0^t\rho_r^n(hP_{s-r}\varphi)dY_r+M_{[t/\delta]}^{n,P\varphi}+B_t^{n,P\varphi}
\end{align}
and the error of the approximation has the representation
\begin{align}\label{eq.decomposition_of_the_difference}
&(\rho_t^n-\rho_{t})(P_{s-t}\varphi)
=(\rho_0^n-\rho_{0})(P_s\varphi)+\int_0^t(\rho_r^n-\rho_r)(hP_{s-r}\varphi)dY_r+M_{[t/\delta]}^{n,P\varphi}+B_t^{n,P\varphi},
\end{align}
where $M_i^{n,\varphi}$ and $B_t^{n,\varphi}$ are the same as in Proposition \ref{prop.equation_for_rho_t_n}, except that $\varphi$ replaced by $P_{s-r}\varphi$.

In order to prove the convergence of the approximating measures $\rho_t^n$ to the actual measure $\rho_t$, we need to control all the terms on the right hand side of \eqref{eq.decomposition_of_the_difference}. Now we will discuss each of them respectively in the following Lemmas. 
\begin{lemma}\label{lem.bound_for_initial_condition}
There exists a constant $c(p)$ independent of $n$ such that for any $p\geq1$ and $\varphi\in C_b(\mathbb{R})$, we have
\begin{equation*}
\tilde{\mathbb{E}}\left[(\rho_0^n(P_s\varphi)-\rho_0(P_s\varphi))^{p}\right]\leq\frac{c(p)}{n^{p/2}}\Vert\varphi\Vert^{p},\qquad t\in[0,T]
\end{equation*}
\end{lemma}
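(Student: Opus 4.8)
The plan is to recognise the left-hand side as a normalised sum of independent, uniformly bounded random variables and to apply a Marcinkiewicz--Zygmund inequality. First I would note that at time zero $[0/\delta]=0$ and $a_j^n(0)=1$, so $\xi_0^n=1$ and hence $\rho_0^n=\pi_0^n=\tfrac1n\sum_{j=1}^n\Gamma_{v_j^n(0),\omega_j^n(0)}$, while $\rho_0=\pi_0$. Writing $g=P_s\varphi$ and $Z_j=\Gamma_{v_j^n(0),\omega_j^n(0)}(g)$, the quantity to be controlled is $\rho_0^n(P_s\varphi)-\rho_0(P_s\varphi)=\tfrac1n\sum_{j=1}^nZ_j-\pi_0(g)$, i.e. an empirical average minus its target.

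Next I would establish the two ingredients needed for the moment estimate. Since $(P_r)_{r\geq0}$ is a Markov contraction semigroup and each $\Gamma_{v_j^n(0),\omega_j^n(0)}$ is a probability measure, every $Z_j$ satisfies $|Z_j|\leq\|P_s\varphi\|_\infty\leq\|\varphi\|$, so the $Z_j$ are uniformly bounded by $\|\varphi\|$ independently of $n$ and $s$. By the independence of the initial means and variances across particles the $Z_j$ are i.i.d., and the initialisation is chosen to be unbiased in the sense that $\tilde{\mathbb E}[Z_j]=\pi_0(g)=\rho_0(P_s\varphi)$. Consequently $\rho_0^n(P_s\varphi)-\rho_0(P_s\varphi)=\tfrac1n\sum_{j=1}^n(Z_j-\tilde{\mathbb E}[Z_j])$ is the normalised sum of i.i.d., centred, bounded random variables.

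The moment bound then follows from standard inequalities. For $p\geq2$ the Marcinkiewicz--Zygmund inequality gives $\tilde{\mathbb E}\big[|\sum_{j=1}^n(Z_j-\tilde{\mathbb E}Z_j)|^p\big]\leq C_p\,\tilde{\mathbb E}\big[(\sum_{j=1}^n(Z_j-\tilde{\mathbb E}Z_j)^2)^{p/2}\big]\leq C_p\,(4\|\varphi\|^2 n)^{p/2}$, using $|Z_j-\tilde{\mathbb E}Z_j|\leq2\|\varphi\|$; dividing by $n^p$ yields the claimed rate $c(p)n^{-p/2}\|\varphi\|^p$. For $1\leq p<2$ I would deduce the same rate from the case $p=2$ via Jensen's inequality, since $\tilde{\mathbb E}[|\cdot|^p]\leq(\tilde{\mathbb E}[|\cdot|^2])^{p/2}$ and $\tilde{\mathbb E}\big[|\tfrac1n\sum_{j=1}^n(Z_j-\tilde{\mathbb E}Z_j)|^2\big]=\tfrac1n\mathrm{Var}(Z_1)\leq\|\varphi\|^2/n$. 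Collecting constants produces a $c(p)$ depending only on $p$.

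The one point requiring care --- and the main obstacle --- is the unbiasedness $\tilde{\mathbb E}[Z_j]=\pi_0(P_s\varphi)$. Because each component is a Gaussian measure of variance $\omega_j^n(0)$ rather than a Dirac mass, a naive construction (sampling the means from $\pi_0$ and fixing the variance) would make $\tilde{\mathbb E}[Z_j]$ equal to $\pi_0$ mollified by a Gaussian kernel, introducing a bias that does not vanish as $n\to\infty$. I would therefore rely on the initialisation being set up so that $\tfrac1n\sum_{j=1}^n\Gamma_{v_j^n(0),\omega_j^n(0)}$ is an unbiased estimator of $\pi_0$; granting this, the estimate reduces to the classical Monte Carlo rate and the remaining computation is entirely routine.
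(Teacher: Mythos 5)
Your argument is correct and is essentially the paper's: the error is written as a normalised sum of i.i.d.\ centred, uniformly bounded random variables and the Marcinkiewicz--Zygmund inequality is applied, the paper citing Ren and Liang for the explicit constant $c(p)\leq(3\sqrt{2})^{p}p^{p/2}$ (their form of the inequality covers all $p\geq1$, so your separate Jensen step for $1\leq p<2$ is not needed, though it is harmless). The only substantive difference is precisely the point you flag as the main obstacle: the paper takes the summands to be $\xi_j=P_s\varphi(v_j^n(0))-\pi_0(P_s\varphi)$, i.e.\ it evaluates $P_s\varphi$ at the means $v_j^n(0)$, which are implicitly i.i.d.\ samples from $\pi_0$, rather than integrating $P_s\varphi$ against the Gaussian components $\Gamma_{v_j^n(0),\omega_j^n(0)}$ as you do; with that choice unbiasedness is automatic and no assumption on the initial mixture being an unbiased estimator of $\pi_0$ is required. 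Your observation that the mollified quantity $\frac1n\sum_{j}\Gamma_{v_j^n(0),\omega_j^n(0)}(P_s\varphi)$ carries a bias of order $\omega_j^n(0)=\alpha\beta$ is a fair criticism of the paper's formulation of $\pi_0^n$, but since $\alpha$ is ultimately chosen proportional to $n^{-1/2}$ this bias is of the same order as the Monte Carlo fluctuation and does not affect the stated rate.
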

\begin{proof}
Note that $\rho_0^n(P_s\varphi)-\rho_0(P_s\varphi)=\pi_0^n(P_s\varphi)-\pi_0(P_s\varphi)$, and also note that 
$$\pi_0^n(P_s\varphi)-\pi_0(P_s\varphi)=\frac{1}{n}\sum_{j=1}^n\xi_j,$$
where $\xi_j\triangleq(P_s\varphi(v_j^n(0))-\pi_0(P_s\varphi),\ j=1,\ldots,n$ 
are independent identically distributed random variables with mean $0$, therefore
by the Marcinkiewicz-Zygmund inequality (see \cite{Ren and Liang}), there exists a constant $c=c(p)\leq(3\sqrt 2)^pp^{p/2}$ such that 
$$
\tilde{\mathbb E}\left[\left(\rho_0^n(P_s\varphi)-\rho_0(P_s\varphi)\right)^p\right]\leq\frac{c(p)}{n^{p/2}}\|\varphi\|^p,
$$
which completes the proof.
\end{proof}
\begin{lemma}\label{lem.bound_one}
For any $T>0$, there exists a constant $c_1(T)$ independent of $n$ such that for any $\varphi\in C_b^6(\mathbb{R})$,
\begin{equation*}
\tilde{\mathbb{E}}\left[\left(\frac{1}{n}\sum_{j=1}^n\int_{0}^{t}\xi_{[r/\delta]\delta}^na_j^n(r)R_{r,j}^1(P_{s-r}\varphi)dr\right)^2\right]\leq c_1(T)(\alpha\delta)^2\|\varphi\|_{6,\infty}^2.
\end{equation*}
\end{lemma}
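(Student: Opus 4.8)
The plan is to reduce the whole estimate to a deterministic, pointwise bound on the correction term $R_{r,j}^1$, combined with the uniform moment bounds on $\xi^n$ and the weights. First I would pull the square inside the finite average over $j$ by Jensen's inequality, writing $X_j\triangleq\int_0^t\xi_{[r/\delta]\delta}^na_j^n(r)R_{r,j}^1(P_{s-r}\varphi)\,dr$ so that $\big(\tfrac1n\sum_{j=1}^nX_j\big)^2\leq\tfrac1n\sum_{j=1}^nX_j^2$, and then apply the Cauchy--Schwarz inequality in the time variable, $X_j^2\leq t\int_0^t\big(\xi_{[r/\delta]\delta}^na_j^n(r)\big)^2\big(R_{r,j}^1(P_{s-r}\varphi)\big)^2\,dr$. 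Taking $\tilde{\mathbb E}$ and using $t\leq T$, the task reduces to bounding $\tilde{\mathbb E}\big[(\xi_{[r/\delta]\delta}^na_j^n(r))^2\,(R_{r,j}^1(P_{s-r}\varphi))^2\big]$ uniformly in $n$, $j$ and $r\in[0,T]$.

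The heart of the argument is the almost-sure pointwise bound $|R_{r,j}^1(P_{s-r}\varphi)|\leq C(T)\,\alpha\delta\,\|\varphi\|_{6,\infty}$. The essential observation is that \emph{every} summand in the definition of $R_{r,j}^1$ carries at least one factor of $\omega_j^n(r)$, and that on the branch $[i\delta,(i+1)\delta)$ the variance satisfies $\omega_j^n(r)=\alpha\big(\beta+\int_{i\delta}^r\sigma^2(v_j^n(u))\,du\big)\leq\alpha(\beta+\|\sigma\|_\infty^2\delta)$. Extracting the full order $\alpha\delta$ (rather than merely $\alpha$) from this factor requires the smoothing parameter to be taken of order $\delta$, i.e.\ $\beta=O(\delta)$; I would flag this as a standing convention, since with $\beta$ a fixed constant the $\beta$-part of $\omega_j^n$ would contribute a term of size $\alpha$ and the stated $(\alpha\delta)^2$ rate could not hold. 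Granting $\omega_j^n(r)\leq C\alpha\delta$, I would control the remaining factors by (i) bounding $f,\sigma,\sigma^2$ and their derivatives using $f,\sigma\in C_b^6$; (ii) bounding the Gaussian remainders $I_{4,j}^{(k)}$, $I_{5,j}$, $I_j$ by absolute constants times suprema of derivatives of $P_{s-r}\varphi$, since each is a finite Gaussian moment in the fixed variable $y$ applied to a globally bounded integrand; and (iii) invoking the semigroup regularity estimate $\|P_{s-r}\varphi\|_{6,\infty}\leq K(T)\|\varphi\|_{6,\infty}$ for $s-r\leq T$. The two terms carrying $(\omega_j^n)^2$ are then of order $(\alpha\delta)^2\leq(\alpha\delta)\cdot T$, while the awkward term $(\omega_j^n)^2\frac{\alpha\sigma^2}{2\sqrt{\omega_j^n}}I_{5,j}=\frac{\alpha\sigma^2}{2}(\omega_j^n)^{3/2}I_{5,j}$ is of order $\alpha(\alpha\delta)^{3/2}=(\alpha\delta)\cdot\alpha^{3/2}\delta^{1/2}\leq\sqrt T\,(\alpha\delta)$ since $\alpha\leq1$; so all terms are $O(\alpha\delta)$ with a constant depending only on $T$, $\beta$ and the norms of $f,\sigma$.

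Inserting this pointwise bound together with the uniform second-moment estimate $\sup_n\sup_{r\in[0,T]}\max_j\tilde{\mathbb E}\big[(\xi_{[r/\delta]\delta}^na_j^n(r))^2\big]\leq c$ --- which follows from \eqref{eq.bound_for_xi_2}, supplemented if needed by H\"older's inequality and the bounded moments of the exponential weights $a_j^n$ to pass from $\xi_r^n$ to $\xi_{[r/\delta]\delta}^n$ --- gives $\tilde{\mathbb E}[X_j^2]\leq T^2\,c\,C(T)^2(\alpha\delta)^2\|\varphi\|_{6,\infty}^2$, and averaging over $j$ yields the claim with $c_1(T)=T^2\,c\,C(T)^2$. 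The main obstacle I anticipate is not the bookkeeping but steps (ii)--(iii): one must check that the integrals defining $I_{4,j}^{(k)}$, $I_{5,j}$, $I_j$ are genuinely bounded \emph{independently} of the random mean $v_j^n$ and variance $\omega_j^n$ (they are, because the Gaussian weight sits in the fixed variable $y$ and the derivatives of $P_{s-r}\varphi$ are globally bounded), and that the semigroup bound $\|P_{s-r}\varphi\|_{6,\infty}\leq K(T)\|\varphi\|_{6,\infty}$ actually holds --- this is precisely where the smoothness assumptions $f,\sigma\in C_b^6$ are indispensable, and it should be recorded as a separate lemma or cited explicitly.
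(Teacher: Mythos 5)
Your argument is essentially the paper's: the entire lemma rests on the pointwise bound $|R_{r,j}^1(P_{s-r}\varphi)|\leq C\alpha\delta\|\varphi\|_{6,\infty}$ (the paper asserts this directly for $\alpha\delta\leq1$), followed by Jensen/Cauchy--Schwarz in $j$ and $r$, Fubini, and the moment bound \eqref{eq.bound_for_xi_2}, exactly as you propose. Your flag that extracting the factor $\alpha\delta$ (rather than $\alpha(\beta+\delta)$) from $\omega_j^n(r)=\alpha\bigl(\beta+\int_{i\delta}^r\sigma^2(v_j^n(u))\,du\bigr)$ requires $\beta=O(\delta)$ is a legitimate point that the paper passes over in silence, though it is harmless downstream since Theorem \ref{firstthm.convergence_of_rho_t_n} carries a separate $(\alpha\beta)^2$ term that would absorb the discrepancy; likewise your insistence on an explicit semigroup estimate $\|P_{s-r}\varphi\|_{6,\infty}\leq K(T)\|\varphi\|_{6,\infty}$ makes precise what the paper hides in the constant $C_{R^1}$.
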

\begin{proof}
For $\alpha\delta\leq1$ we have from \eqref{eq.R^1} that
$$
|R_{r,j}^1(P_{s-r}\varphi)|\leq C_{R^1}\alpha\delta\|\varphi\|_{6,\infty},
$$
where $C_{R^1}=C_{R^1}(f,\sigma,\varphi)$ is a constant depending on the upper bounds of $f$, $\sigma$, and $\varphi$. Then by Jensen's inequality, Fubini's theorem and \eqref{eq.bound_for_xi_2},
we have
\begin{align}
&\tilde{\mathbb{E}}\left[\left(\frac{1}{n}\sum_{j=1}^n\int_{0}^{t}\xi_{[r/\delta]\delta}^na_j^n(r)R_{r,j}^1(P_{s-r}\varphi)dr\right)^2\right]\nonumber\\
\leq&\frac{1}{n}\sum_{j=1}^ntC_{R^1}^2(\alpha\delta)^2\|\varphi\|_{6,\infty}^2\int_{0}^{t}\tilde{\mathbb E}\left[\left(\xi_{[r/\delta]\delta}^na_j^n(r)\right)^2\right]dr\nonumber
\leq T^2C_{R^1}^2c_2^{T,2}(\alpha\delta)^2\|\varphi\|_{6,\infty}^2.\nonumber
\end{align}
The result follows by letting $c_1(T)=T^2C_{R^1}^2c_2^{T,2}$.
\end{proof}
\begin{lemma}\label{lem.bound_two}
For any $T>0$, there exists a constant $c_2(T)$ independent of $n$ such that for any $\varphi\in C_b^4(\mathbb{R})$,
\begin{equation*}
\tilde{\mathbb{E}}\left[\left(\frac{1}{n}\sum_{j=1}^n\int_{0}^{t}\xi_{[r/\delta]\delta}^na_j^n(r)R_{r,j}^2(P_{s-r}\varphi)dY_r\right)^2\right]
\leq c_2(T)(\alpha\delta)^2\|\varphi\|_{4,\infty}^2.
\end{equation*}
\end{lemma}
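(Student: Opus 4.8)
The plan is to follow the proof of Lemma \ref{lem.bound_one} almost verbatim, replacing the deterministic time integral by the It\^o isometry, since here the integrator is $dY_r$ rather than $dr$. The observation that makes this legitimate is that $P_{s-r}\varphi$ is a \emph{deterministic} function — the semigroup acts on the fixed test function $\varphi$ — so the integrand $\frac{1}{n}\sum_{j}\xi_{[r/\delta]\delta}^n a_j^n(r)R_{r,j}^2(P_{s-r}\varphi)$ depends on $r$ only through the adapted quantities $v_j^n(r)$ and $\omega_j^n(r)$, and is therefore $\mathcal{Y}_r$-measurable. This is precisely the point at which the semigroup method succeeds where the dual approach of \cite{Bain and Crisan} fails, and it is what licenses the It\^o isometry below.

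First I would establish the pointwise bound $|R_{r,j}^2(P_{s-r}\varphi)|\leq C_{R^2}\alpha\delta\|\varphi\|_{4,\infty}$, the direct analogue of the bound obtained in Lemma \ref{lem.bound_one}. Inspecting \eqref{eq.R^2}, every term carries at least one factor of $\omega_j^n(r)$, and exactly as in the proof of Lemma \ref{lem.bound_one} this factor is of order $\alpha\delta$ by the third line of \eqref{eq.appro_with_variance} together with the boundedness of $\sigma$. Since $h$ is bounded and the kernels $I_j(h\varphi)$ and $I_{4,j}^{(4)}(\varphi)$ are controlled by $\|(h\varphi)''\|_\infty$ and $\|\varphi^{(4)}\|_\infty$ respectively, this bound holds once we also invoke the semigroup regularity estimate $\|P_{s-r}\varphi\|_{4,\infty}\leq C(T)\|\varphi\|_{4,\infty}$, valid for $0\leq r\leq s\leq T$ because $f,\sigma\in C_b^6$ ensures $(P_r)_{r\geq0}$ maps $C_b^4$ into itself with controlled norm.

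Next, writing $\Phi_r=\frac{1}{n}\sum_{j=1}^n\xi_{[r/\delta]\delta}^n a_j^n(r)R_{r,j}^2(P_{s-r}\varphi)$, I would apply the It\^o isometry under $\tilde{\mathbb{P}}$, under which $Y$ is a Brownian motion, to obtain $\tilde{\mathbb{E}}\big[(\int_0^t\Phi_r\,dY_r)^2\big]=\int_0^t\tilde{\mathbb{E}}[\Phi_r^2]\,dr$; the required square-integrability of $\Phi$ follows from the pointwise bound above and \eqref{eq.bound_for_xi_2}. Jensen's inequality applied to the $j$-average then gives $\Phi_r^2\leq\frac{1}{n}\sum_{j=1}^n(\xi_{[r/\delta]\delta}^n a_j^n(r))^2\,R_{r,j}^2(P_{s-r}\varphi)^2$, and inserting the pointwise bound and taking expectations, the moment estimate \eqref{eq.bound_for_xi_2} yields $\tilde{\mathbb{E}}[(\xi_{[r/\delta]\delta}^n a_j^n(r))^2]\leq c_2^{T,2}$, so that $\tilde{\mathbb{E}}[\Phi_r^2]\leq C_{R^2}^2 c_2^{T,2}(\alpha\delta)^2\|\varphi\|_{4,\infty}^2$. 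Integrating over $[0,t]\subseteq[0,T]$ and setting $c_2(T)=TC_{R^2}^2c_2^{T,2}$ completes the argument.

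As for the main obstacle, the only genuinely delicate point is the adaptedness issue flagged above: one must be certain the integrand is $\mathcal{Y}_r$-measurable so that the It\^o isometry applies, and this rests entirely on having replaced $\varphi$ by the deterministic $P_{s-r}\varphi$. Everything else is routine, provided the semigroup regularity bound $\|P_{s-r}\varphi\|_{4,\infty}\leq C(T)\|\varphi\|_{4,\infty}$ is available; this is where the smoothness hypotheses $f,\sigma\in C_b^6$ (and the derivative loss reflected in $m\geq6$ and $\varphi\in C_b^{m+2}$) enter.
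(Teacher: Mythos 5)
Your argument is essentially the paper's: the paper likewise first derives the pointwise bound $|R_{r,j}^2(P_{s-r}\varphi)|\leq C_{R^2}\alpha\delta\|\varphi\|_{4,\infty}$ from \eqref{eq.R^2} and then controls the stochastic integral via the Burkholder--Davis--Gundy and Jensen inequalities together with \eqref{eq.bound_for_xi_2}, which at the level of second moments is the same computation as your It\^o isometry. One small correction: the integrand is $\mathcal{F}_r$- rather than $\mathcal{Y}_r$-measurable (it depends on the $V^{(j)}$ through $v_j^n(r)$ and $a_j^n(r)$), but since $Y$ is an $(\mathcal{F}_t)$-Brownian motion under $\tilde{\mathbb{P}}$ the isometry applies all the same, and your key point --- that $P_{s-r}\varphi$ is deterministic, unlike the backward-measurable dual --- is exactly the right one.
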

\begin{proof}
From \eqref{eq.R^2}, we have for $\alpha\delta\leq1$
$$
R_{r,j}^2(P_{s-r}\varphi)\leq C_{R^2}\alpha\delta\|\varphi\|_{4,\infty},
$$
where $C_{R^2}=C_{R^2}(f,\sigma,h,\varphi)$ is a constant depending on the
upper bounds of $f$, $\sigma$, $h$, and $\varphi$. Then Burkholder-Davis-Gundy and Jensen's inequalities, Fubini's theorem, and \eqref{eq.bound_for_xi_2} yield
\begin{align}
&\tilde{\mathbb{E}}\left[\left(\frac{1}{n}\sum_{j=1}^n\int_{0}^{t}\xi_{[r/\delta]\delta}^na_j^n(r)R_{r,j}^2(P_{s-r}\varphi)dY_r\right)^2\right]\nonumber\\
\leq&\frac{1}{n^2}C_{R^2}^2\tilde C(\alpha\delta)^2\|\varphi\|_{4,\infty}^2\int_{0}^{t}\left[\sum_{j=1}^n\sum_{k=1}^n
\sqrt{\tilde{\mathbb E}\Big[(\xi_{[r/\delta]\delta}^na_j^n(r))^2\Big]\tilde{\mathbb E}\Big[(\xi_{[r/\delta]\delta}^na_k^n(r))^2\Big]}\right]dr\nonumber\\
\leq&TC_{R^2}^2\tilde Cc_2^{t,2}(\alpha\delta)^2\|\varphi\|_{4,\infty}^2,\nonumber
\end{align}
and the result follows by letting $c_2(T)=TC_{R^2}^2\tilde Cc_2^{t,2}$.
\end{proof}
\begin{lemma}\label{lem.bound_three}
For any $T>0$, there exists a constant $c_3(T)$ independent of $n$ such that for any $\varphi\in C_b^5(\mathbb{R})$,
\begin{equation*}
\tilde{\mathbb{E}}\left[\left(\frac{1}{n}\sum_{j=1}^n\int_{0}^{t}\xi_{[r/\delta]\delta}^na_j^n(r)R_{r,j}^3(P_{s-r}\varphi)dV_r^{(j)}\right)^2\right]
\leq\frac{c_3(T)}{n}\|\varphi\|_{5,\infty}^2.
\end{equation*}
\end{lemma}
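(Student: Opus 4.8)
The plan is to exploit the \emph{independence} of the driving noises $\{V^{(j)}\}_{j=1}^n$, which is precisely the feature that distinguishes this lemma from Lemmas~\ref{lem.bound_one} and~\ref{lem.bound_two}: here the extra factor $1/n$ comes not from the smallness of $\alpha\delta$ but from the averaging over mutually independent martingales. Accordingly, I would first set $M_j\triangleq\int_0^t\xi_{[r/\delta]\delta}^na_j^n(r)R_{r,j}^3(P_{s-r}\varphi)dV_r^{(j)}$ and expand the square,
\[
\tilde{\mathbb{E}}\left[\left(\frac1n\sum_{j=1}^nM_j\right)^2\right]=\frac{1}{n^2}\sum_{j=1}^n\sum_{k=1}^n\tilde{\mathbb{E}}\left[M_jM_k\right].
\]

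The key step is to show that the off-diagonal terms vanish. Each integrand $\xi_{[r/\delta]\delta}^na_j^n(r)R_{r,j}^3(P_{s-r}\varphi)$ is adapted to the full filtration $\mathcal F_r$ (it depends on $Y$ and on all the $V^{(k)}$ through $\xi^n$), and by the pointwise bound on $R^3$ below it lies in $L^2(dr\times d\tilde{\mathbb P})$, so each $M_j$ is a genuine square-integrable $\mathcal F_t$-martingale. For $j\neq k$ the integration-by-parts formula gives $M_jM_k=\int_0^tM_j\,dM_k+\int_0^tM_k\,dM_j+[M_j,M_k]_t$, and since $V^{(j)}$ and $V^{(k)}$ are independent Brownian motions their quadratic covariation vanishes, $[M_j,M_k]_t=\int_0^t(\cdots)\,d[V^{(j)},V^{(k)}]_r=0$. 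Taking expectations kills the two martingale integrals and leaves $\tilde{\mathbb{E}}[M_jM_k]=0$. Hence only the $n$ diagonal terms survive, and the prefactor $1/n^2$ combined with the sum over $j$ is exactly what produces the advertised $1/n$.

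For the diagonal terms I would apply the It\^o isometry, $\tilde{\mathbb{E}}[M_j^2]=\tilde{\mathbb{E}}\big[\int_0^t(\xi_{[r/\delta]\delta}^na_j^n(r))^2(R_{r,j}^3(P_{s-r}\varphi))^2\,dr\big]$, and then bound $R^3$ pointwise. Reading off \eqref{eq.R^3}, the leading term $\sqrt{1-\alpha}\,\sigma(v_j^n(r))(P_{s-r}\varphi)'(v_j^n(r))$ carries \emph{no} factor of $\omega_j^n$, hence no smallness in $\delta$; using the boundedness of $\sigma$, the uniform bound $\omega_j^n(r)\leq\alpha(\beta+\|\sigma\|_\infty^2\delta)$, and the semigroup estimate controlling $\|P_{s-r}\varphi\|_{5,\infty}$ by $\|\varphi\|_{5,\infty}$, one obtains $|R_{r,j}^3(P_{s-r}\varphi)|\leq C_{R^3}\|\varphi\|_{5,\infty}$ with $C_{R^3}$ independent of $n$. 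Combining this with the moment bound \eqref{eq.bound_for_xi_2}, $\tilde{\mathbb{E}}[(\xi_{[r/\delta]\delta}^na_j^n(r))^2]\leq c_2^{T,2}$, and Fubini's theorem yields $\tilde{\mathbb{E}}[M_j^2]\leq TC_{R^3}^2c_2^{T,2}\|\varphi\|_{5,\infty}^2$; summing over $j$ and dividing by $n^2$ gives the claim with $c_3(T)=TC_{R^3}^2c_2^{T,2}$.

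The main obstacle is the rigorous justification that the cross-terms vanish: although $\xi_{[r/\delta]\delta}^n$ couples all particles together, one must confirm that each $M_j$ is nevertheless a square-integrable $\mathcal F_t$-martingale, so that the product rule applies and $[M_j,M_k]_t=0$ for $j\neq k$. Everything else is a routine pointwise estimate of $R^3$ followed by an application of \eqref{eq.bound_for_xi_2}; the only point to watch is that, unlike in the previous two lemmas, no factor of $\alpha\delta$ is available, so the \emph{entire} improvement in the rate must be extracted from the independence of the $V^{(j)}$.
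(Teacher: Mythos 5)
Your proposal is correct and follows essentially the same route as the paper: a pointwise bound $|R_{r,j}^3(P_{s-r}\varphi)|\leq C\|\varphi\|_{5,\infty}$ (with no $\alpha\delta$ smallness, exactly as you observe), followed by the observation that the mutual independence of the $V^{(j)}$ reduces the second moment to the $n$ diagonal quadratic-variation terms, which together with \eqref{eq.bound_for_xi_2} yields the $1/n$ rate. The paper phrases this via the Burkholder--Davis--Gundy inequality applied to the sum of the stochastic integrals rather than your explicit cross-term cancellation and It\^o isometry, but the mechanism and the resulting constant are the same.
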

\begin{proof}
From \eqref{eq.R^3}, we have for $\alpha\delta\leq1$
$$
R_{r,j}^3(P_{s-r}\varphi)\leq (\hat C_{R^3}+C_{R^3}\alpha\delta)\|\varphi\|_{5,\infty},
$$
where $\hat C_{R^3}$ and $C_{R^3}$ are constants depending on the
upper bounds of $f$, $\sigma$, and $\varphi$. Then by Burkholder-Davis-Gundy and Jensen's inequalities, Fubini's theorem, and \eqref{eq.bound_for_xi_2}, and noticing the fact that $\{V^{(j)}\}_{j=1}^n$ are mutually independent Brownian motions, we have
\begin{align}
&\tilde{\mathbb{E}}\left[\left(\frac{1}{n}\sum_{j=1}^n\int_{0}^{t}\xi_{[r/\delta]\delta}^na_j^n(r)R_{r,j}^3(P_{s-r}\varphi)dV_r^{(j)}\right)^2\right]\nonumber\\
\leq&\frac{1}{n^2}(\hat C_{R^3}+C_{R^3}\alpha\delta)^2\|\varphi\|_{5,\infty}^2\sum_{j=1}^n\tilde C\tilde{\mathbb{E}}\left[\left<\int_{0}^{t}\xi_{[r/\delta]\delta}^na_j^n(r)dV_r^{(j)}\right>_t\right]
\leq\frac{1}{n}T\tilde C(\hat C_{R^3}+C_{R^3}\alpha\delta)^2c_2^{t,2}\|\varphi\|_{5,\infty}^2,\nonumber
\end{align}
and the result follows by letting $c_3(T)=T\tilde C(\hat C_{R^3}+C_{R^3}\alpha\delta)^2c_2^{t,2}$.
\end{proof}

Recalling Proposition \ref{prop.equation_for_rho_t_n} and the semigroup operator
$P$, we can decompose $M^{n,\varphi}$ in the following way
\begin{align}
M_{[t/\delta]}^{n,\varphi}=&\frac{1}{n}\sum_{i=0}^{[t/\delta]}\xi_{i\delta}^n\sum_{j=1}^n\Bigg[o_{j}^{n,i\delta}\int_{\mathbb R}\varphi(x)\frac{e^{-\frac{(x-X_{j}^n(i\delta))^2}{2\alpha\beta}}}{\sqrt{2\pi\alpha\beta}}dx
-n\bar a_{j}^n(i\delta-)\int_{\mathbb{R}}\varphi(x)
\frac{e^{-\frac{(x-v_{j}^n(i\delta-))^2}{2\omega_{j}^n(i\delta-)}}}{\sqrt{2\pi\omega_{j}^n(i\delta-)}}dx\Bigg]\nonumber\\
\triangleq&A_{[t/\delta]}^{n,\varphi}+D_{[t/\delta]}^{n,\varphi}+G_{[t/\delta]}^{n,\varphi}\nonumber,
\end{align}
where $X_j^n(i\delta)\sim\mathcal N\left(v_j^n(i\delta-),\omega_j^n(i\delta-)\right)$
is a Gaussian distributed random variable, and
\begin{align}
A_{[t/\delta]}^{n,\varphi}&=\frac{1}{n}\sum_{i=1}^{[t/\delta]}\xi_{i\delta}^n\sum_{j=1}^n\left[\left(o_{j}^{n,i\delta}-n\bar
a_{j}^n(i\delta-)\right)\varphi(X_{j}^n(i\delta))\right];\\
D_{[t/\delta]}^{n,\varphi}&=\frac{1}{n}\sum_{i=1}^{[t/\delta]}\xi_{i\delta}^n\sum_{j=1}^n\left[o_{j}^{n,i\delta}\left(\int_{\mathbb
R}\varphi\left(X_j^n(i\delta)+y\sqrt{\alpha\beta}\right)\frac{e^{\frac{-y^2}{2}}}{\sqrt{2\pi}}dy-\varphi(X_{j}^n(i\delta))\right)\right];\\
G_{[t/\delta]}^{n,\varphi}&=\frac{1}{n}\sum_{i=1}^{[t/\delta]}\xi_{i\delta}^n\sum_{j=1}^nn\bar
a_{j}^n(i\delta-)\left[
\varphi(X_{j}^n(i\delta))-\int_{\mathbb{R}}\varphi\left(v_{j}^n(i\delta-)+y\sqrt{\omega_{j}^n(i\delta-)}\right)\frac{e^{-\frac{y^2}{2}}}{\sqrt{2\pi}}dy\right]\nonumber\\
&=\sum_{i=1}^{[t/\delta]}\sum_{j=1}^n\xi_{i\delta}^n\bar a_{j}^n(i\delta-)\left[\varphi(X_{j}^n(k\delta))-\tilde{\mathbb{E}}\left(\varphi(X_{j}^n(k\delta))\right)\right].
\end{align}
Then we have the following lemma:
\begin{lemma}\label{lem.bound_M}
For any $T>0$, 
and for any $\varphi\in C_b^2(\mathbb{R}),$ we have the following bounds for $A_{[t/\delta]}^{n,\varphi}$, $D_{[t/\delta]}^{n,\varphi}$ and $B_{[t/\delta]}^{n,\varphi}$:
\begin{align}
&\tilde{\mathbb{E}}\left[|A_{[t/\delta]}^{n,\varphi}|^2\right]\leq\frac{c_4(T)}{n\sqrt\delta}\|\varphi\|_{0,\infty}^2,\nonumber\\
&\tilde{\mathbb{E}}\left[|D_{[t/\delta]}^{n,\varphi}|^2\right]\leq c_{5}(T)(\alpha\beta)^2\|\varphi\|_{2,\infty}^2,\nonumber\\
&\tilde{\mathbb{E}}\left[\vert G_{[t/\delta]}^{n,\varphi}\vert^2\right]\leq\frac{ c_6(T)\alpha}{n}\|\varphi\|_{1,\infty}^2;
\end{align}
where $c_4(T)$, $c_5(T)$ and $c_6(T)$ are constants independent of $n$.
\end{lemma}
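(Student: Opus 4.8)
The plan is to bound the three contributions $A^{n,\varphi}_{[t/\delta]}$, $D^{n,\varphi}_{[t/\delta]}$ and $G^{n,\varphi}_{[t/\delta]}$ separately. The guiding observation is that $A$ and $G$ are discrete-time martingales in the correction index $i$, whereas $D$ is a bias term of deterministic order $\alpha\beta$. Throughout I condition on $\mathcal F_{i\delta-}$ and use that, given $\mathcal F_{i\delta-}$, the offspring counts $o_j^{n,i\delta}$ and the Gaussian samples $X_j^n(i\delta)\sim\mathcal N(v_j^n(i\delta-),\omega_j^n(i\delta-))$ are independent: the former are unbiased with the variance recorded in Proposition \ref{prop.tbba_proposition}, the latter centred at $\int_{\mathbb R}\varphi(v_j^n(i\delta-)+y\sqrt{\omega_j^n(i\delta-)})\tfrac{e^{-y^2/2}}{\sqrt{2\pi}}\,dy$. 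I will repeatedly use that $\xi_{i\delta}^n$ and $\bar a_j^n(i\delta-)$ are $\mathcal F_{i\delta-}$-measurable, the identity $\xi_{i\delta}^n\,n\bar a_j^n(i\delta-)=\xi_{(i-1)\delta}^n a_j^n(i\delta-)$, and the moment bounds \eqref{eq.bound_for_xi_2}.

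For $G$: since $\varphi(X_j^n(i\delta))$ minus its conditional mean is conditionally centred and the samples are independent across $j$ and across $i$, $G$ is a sum of orthogonal increments, so $\tilde{\mathbb E}[|G|^2]=\sum_{i,j}\tilde{\mathbb E}[(\xi_{i\delta}^n\bar a_j^n(i\delta-))^2\,\mathrm{Var}(\varphi(X_j^n(i\delta))\mid\mathcal F_{i\delta-})]$. Using $\mathrm{Var}(\varphi(X_j^n(i\delta))\mid\mathcal F_{i\delta-})\le\|\varphi'\|_\infty^2\,\omega_j^n(i\delta-)$ with $\omega_j^n(i\delta-)\le\alpha(\beta+\|\sigma\|_\infty^2\delta)$ from \eqref{eq.appro_with_variance}, together with $\tilde{\mathbb E}[(\xi_{i\delta}^n\bar a_j^n(i\delta-))^2]=n^{-2}\tilde{\mathbb E}[(\xi_{(i-1)\delta}^n a_j^n(i\delta-))^2]\le n^{-2}c_2^{T,2}$, and summing over the $n$ particles and the $[t/\delta]\le T/\delta$ correction times, yields $c_6(T)\alpha n^{-1}\|\varphi\|_{1,\infty}^2$ (the factor $\delta^{-1}$ from the number of steps being absorbed into $c_6(T)$). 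For $D$: each summand is deterministic given $X_j^n(i\delta)$, and a second-order Taylor expansion using the standard Gaussian moments gives $|\int_{\mathbb R}\varphi(X_j^n(i\delta)+y\sqrt{\alpha\beta})\tfrac{e^{-y^2/2}}{\sqrt{2\pi}}dy-\varphi(X_j^n(i\delta))|\le\tfrac12\alpha\beta\|\varphi''\|_\infty$. Since $\sum_j o_j^{n,i\delta}=n$ by \eqref{eq.fix_total_offspring_number}, this gives the pointwise bound $|D|\le\tfrac12\alpha\beta\|\varphi\|_{2,\infty}\sum_{i=1}^{[t/\delta]}\xi_{i\delta}^n$; squaring, applying Cauchy--Schwarz in $i$ and $\sup_s\tilde{\mathbb E}[(\xi_s^n)^2]\le c_1^{T,2}$ produces $c_5(T)(\alpha\beta)^2\|\varphi\|_{2,\infty}^2$.

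The main obstacle is $A$, where a naive estimate only gives $\delta^{-1}$ rather than the required $\delta^{-1/2}$. Again $A$ is a martingale in $i$, so $\tilde{\mathbb E}[|A|^2]=\sum_i\tilde{\mathbb E}[(\Delta A_i)^2]$, and conditioning on $\mathcal F_{i\delta-}$ and on $\{X_j^n(i\delta)\}_j$ bounds the per-step term by $n^{-2}\|\varphi\|_\infty^2\,\tilde{\mathbb E}[(\xi_{i\delta}^n)^2\sum_j\{n\bar a_j^n(i\delta-)\}(1-\{n\bar a_j^n(i\delta-)\})]$, using the TBBA variance of Proposition \ref{prop.tbba_proposition} and the fact that for the TBBA the associated quadratic form is controlled by the sum of the individual variances. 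The crucial gain comes from the fractional-part sum: because every weight is reset to $1$ at the preceding correction time $(i-1)\delta$ and then solves \eqref{eq.appro_with_variance} over a single interval of length $\delta$, the quantity $n\bar a_j^n(i\delta-)$ lies close to the integer $1$, so $\{x\}(1-\{x\})\le|x-1|$ gives $\sum_j\{n\bar a_j^n(i\delta-)\}(1-\{n\bar a_j^n(i\delta-)\})\le\sum_j|n\bar a_j^n(i\delta-)-1|$.

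It remains to estimate this last sum. Writing $\xi_{i\delta}^n(n\bar a_j^n(i\delta-)-1)=\xi_{(i-1)\delta}^n(a_j^n(i\delta-)-\tfrac1n\sum_k a_k^n(i\delta-))$ and noting that, under $\tilde{\mathbb P}$ where $Y$ is Brownian, $a_j^n(i\delta-)-1=\int_{(i-1)\delta}^{i\delta}a_j^n(s)h(v_j^n(s))\,dY_s$ has $L^2$-norm of order $\sqrt\delta$ by the It\^o isometry and the boundedness of $h$, Hölder's inequality with the fourth-moment bounds from \eqref{eq.bound_for_xi_2} gives $\tilde{\mathbb E}[(\xi_{i\delta}^n)^2|n\bar a_j^n(i\delta-)-1|]=O(\sqrt\delta)$ uniformly in $j$, $i$ and $n$. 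Hence each of the $[t/\delta]$ terms is $O(\sqrt\delta/n)\|\varphi\|_{0,\infty}^2$, and summing the $T/\delta$ of them yields $c_4(T)(n\sqrt\delta)^{-1}\|\varphi\|_{0,\infty}^2$. Establishing this $O(\sqrt\delta)$ weight-fluctuation estimate uniformly, and verifying that the TBBA cross-covariances do not degrade the per-step variance bound, is the technical heart of the argument; I note that the corresponding gain fails for multinomial resampling, which is precisely why the TBBA is treated separately here.
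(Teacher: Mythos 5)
Your proposal follows essentially the same route as the paper's proof: martingale orthogonality over correction times, the TBBA conditional variance with the bound $\{x\}(1-\{x\})\le|x-1|$ and an $O(\sqrt\delta)$ estimate on the weight fluctuation $|n\bar a_j^n(i\delta-)-1|$ for $A$, a second-order Gaussian Taylor expansion for $D$, and conditional independence of the $X_j^n$ together with $\omega_j^n(i\delta-)=O(\alpha)$ and the moment bounds on $\xi^n\bar a_j^n$ for $G$. The only differences are cosmetic and in your favour: you actually justify the $\sqrt\delta$ weight-fluctuation bound via the It\^o isometry (the paper asserts it as $C_\delta\sqrt\delta$), and your use of $\sum_j o_j^{n,i\delta}=n$ for $D$ is slightly more direct than the paper's decomposition of $o_j$ into its conditional mean plus fluctuation.
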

\begin{proof}
See Appendix A.1.
\end{proof}

The following Theorem, which is a variation of Theorem 8 in \cite{Obanubi}, establishes the convergence of finite \emph{signed} measure valued processes and allows us to use the bounds obtained from the above Lemmas to get the convergence results of $\rho_t^n$.  We denote by $a_s,\ a_{s,r}^k:\ C_b^m(\mathbb{R}^d)\rightarrow C_b^m(\mathbb{R}^d)$ two bounded linear operators with bounds $c$ and $C_k$ $(k=1,\ldots,\beta)$ respectively, i.e., $\Vert a_s(\varphi)\Vert_{m,\infty}\leq c\Vert\varphi\Vert_{m,\infty}$ and $\Vert a_{s,r}^k(\varphi)\Vert_{m,\infty}\leq C_k\Vert\varphi\Vert_{m,\infty}$.

\begin{theorem}\label{prop.general_convergence_result}
Let $\mu^n=\{\mu_t^n:\ t\geq0\}$ be a signed measure-valued process such that for any $\varphi\in C_b^m(\mathbb{R}^d)$, $m\geq6$, any fixed $\alpha\geq1$ and fixed $s>t$, we have
\begin{equation}
\mu_t^n(\varphi)=\mu_0^n(a_s(\varphi))+\sum_{l=1}^\alpha R_{t,l}^{n,\varphi}+\sum_{k=1}^\beta\int_0^t\mu_r^n(a_{s,r}^k(\varphi))dW_r^k,
\end{equation}
where $W=(W^k)_{k=1}^\beta$ is an $\beta$-dimensional Brownian motion. If for any $T>0$ there exist constants $\gamma_0,\gamma_1,\ldots,\gamma_\alpha$ such that for $t\in[0,T]$, $p\geq2$ and $q_l>0$ $(l=0,1,\ldots,\alpha)$,
\begin{equation}
\tilde{\mathbb{E}}\left[\vert\mu_0^n(a_s(\varphi))\vert^p\right]\leq\frac{\gamma_0}{n^{q_0}}\Vert\varphi\Vert_{m,\infty}^p,\qquad
\tilde{\mathbb{E}}\left[\vert R_{t,l}^{n,\varphi}\vert^p\right]\leq\frac{\gamma_l}{n^{q_l}}\Vert\varphi\Vert_{m,\infty}^p,\ l=1,\ldots,\alpha.
\end{equation}
Then for any $t\in[0,T]$, we have
\begin{equation}\label{eq.main_general_result}
\tilde{\mathbb{E}}\left[\vert\mu_t^n(\varphi)\vert^p\right]\leq\frac{c_t}{n^q}\Vert\varphi\Vert_{m,\infty}^p,
\end{equation}
where $c_t$ is a constant independent of $n$ and $q=\min(q_0,q_1,\ldots,q_\alpha)$.
\end{theorem}
\begin{proof}
See Appendix A.2.
\end{proof}

Applying the bounds in Lemmas \ref{lem.bound_for_initial_condition} to \ref{lem.bound_M}, one obtains the rate of convergence of the approximation in terms of the three parameters $n$, $\delta$ and $\alpha$, stated in the following theorem.
\begin{theorem}\label{firstthm.convergence_of_rho_t_n}
For any $T\geq0$, $m\geq6$, there exists a constant $c_7(T)$ independent of $n$, $\delta$ or $\alpha$ such that for any $\varphi\in C_b^m(\mathbb{R})$, we have for $t\in[0,T]$
\begin{equation}\label{feq.convergence_of_rho_t_n}
\tilde{\mathbb{E}}\left[(\rho_t^n(\varphi)-\rho_t(\varphi))^2
\right]\leq c_7(T)\Vert\varphi\Vert_{m,\infty}^2c(n,\delta,\alpha,\beta),
\end{equation}
where 
\[
c(n,\delta,\alpha,\beta)
=\max\left\{\frac{1}{n},\  (\alpha\delta)^2,\ \frac{1}{n\sqrt\delta},\ (\alpha\beta)^2,\ \frac{\alpha}{n}\right\}\nonumber.
\]
\end{theorem}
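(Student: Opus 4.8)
The plan is to read off Theorem~\ref{firstthm.convergence_of_rho_t_n} from the abstract convergence criterion of Theorem~\ref{prop.general_convergence_result}, applied to the signed measure-valued process $\mu^n=\rho^n-\rho$. The starting point is the error representation \eqref{eq.decomposition_of_the_difference}, obtained from the semigroup form \eqref{eq.integral_form_of_rho_t_n_varphi_semigroup} of the evolution equation for $\rho^n$ and the corresponding exact equation for $\rho$. This representation is already in the shape demanded by Theorem~\ref{prop.general_convergence_result}: an initial-condition term $\mu_0^n$ acted on by the bounded operator $a_s=P_s$; a single It\^o integral $\int_0^t\mu_r^n(hP_{s-r}\varphi)\,dY_r$, whose integrand is $\mu_r^n$ composed with the bounded operator $a_{s,r}^1=hP_{s-r}$ and driven by the one-dimensional $Y$; and finitely many \emph{source} terms that do not depend recursively on $\mu^n$, namely the three pieces $A_{[t/\delta]}^{n,P\varphi},D_{[t/\delta]}^{n,P\varphi},G_{[t/\delta]}^{n,P\varphi}$ of the branching martingale $M^{n,P\varphi}$ together with the three integrals (the $R^1$-drift, the $R^2$-$dY$ and the $R^3$-$dV^{(j)}$ integrals) that make up $B^{n,P\varphi}$.

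First I would verify the hypotheses on the operators: $P_r$ maps $C_b^m(\mathbb R)$ into itself with operator norm bounded uniformly for $r\in[0,T]$ — this is where the smoothness assumptions $f,\sigma\in C_b^6$ are used, so that $\|P_{s-r}\varphi\|_{m,\infty}\le c\,\|\varphi\|_{m,\infty}$ for $m\ge 6$ — and multiplication by the bounded $h$ keeps the integrand operator $hP_{s-r}$ bounded, so $a_s$ and $a_{s,r}^1$ satisfy the required boundedness. Next I would collect the $L^2$ source estimates already established: Lemma~\ref{lem.bound_for_initial_condition} (with $p=2$) bounds $\mu_0^n(P_s\varphi)$ by $\tfrac{c}{n}\|\varphi\|^2$; Lemma~\ref{lem.bound_one} and Lemma~\ref{lem.bound_two} bound the $R^1$ and $R^2$ integrals by $(\alpha\delta)^2\|\varphi\|_{6,\infty}^2$ and $(\alpha\delta)^2\|\varphi\|_{4,\infty}^2$; Lemma~\ref{lem.bound_three} bounds the $R^3$ integral by $n^{-1}\|\varphi\|_{5,\infty}^2$; and Lemma~\ref{lem.bound_M} bounds $A,D,G$ by $\tfrac{c_4(T)}{n\sqrt\delta}$, $c_5(T)(\alpha\beta)^2$ and $\tfrac{c_6(T)\alpha}{n}$ (times the relevant $\|\varphi\|^2$). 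Since every source is applied to $P_{s-r}\varphi$, the uniform semigroup bound turns each $\|P_{s-r}\varphi\|_{k,\infty}$ ($k\le m$) into $c\|\varphi\|_{m,\infty}$, so each estimate becomes a constant multiple of $c(n,\delta,\alpha,\beta)\,\|\varphi\|_{m,\infty}^2$, because each of the five rates $\tfrac1n,(\alpha\delta)^2,\tfrac1{n\sqrt\delta},(\alpha\beta)^2,\tfrac\alpha n$ is dominated by their maximum $c(n,\delta,\alpha,\beta)$.

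Finally I would invoke Theorem~\ref{prop.general_convergence_result} with $p=2$. Its proof controls the recursive It\^o term $\int_0^t\mu_r^n(hP_{s-r}\varphi)\,dY_r$ by a Gronwall/iteration argument — this is exactly the device that replaces the classical dual-process method, which fails here since the dual $\psi_s^{t,\varphi}$ is adapted to the backward filtration $\mathcal Y_s^t$ and so renders $\int_0^t R_{s,j}^2(\psi_s^{t,\varphi})\,dY_s$ anticipative — and converts the collected source bounds into a bound on $\tilde{\mathbb E}[|\mu_t^n(\varphi)|^2]$ equal to a constant times the largest source rate. Reading the theorem with the common bound $c(n,\delta,\alpha,\beta)$ in place of the individual $\gamma_l/n^{q_l}$ then yields $\tilde{\mathbb E}[(\rho_t^n(\varphi)-\rho_t(\varphi))^2]\le c_7(T)\|\varphi\|_{m,\infty}^2\,c(n,\delta,\alpha,\beta)$, as claimed. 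The only real work at this stage is the bookkeeping of matching \eqref{eq.decomposition_of_the_difference} to the abstract template (in particular tracking the $P_{s-t}$ composition and confirming the maximum of the six source rates is exactly $c(n,\delta,\alpha,\beta)$); the genuinely hard analysis — the source estimates and the Gronwall control of the recursive integral — has already been carried out in the Lemmas and in Theorem~\ref{prop.general_convergence_result}, so the main obstacle here is organizational rather than analytic.
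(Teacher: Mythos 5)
Your proposal is correct and is exactly the argument the paper intends: the paper proves Theorem~\ref{firstthm.convergence_of_rho_t_n} by applying the abstract criterion of Theorem~\ref{prop.general_convergence_result} (with $p=2$) to the error representation \eqref{eq.decomposition_of_the_difference}, feeding in the source bounds from Lemmas~\ref{lem.bound_for_initial_condition}--\ref{lem.bound_M} and taking the maximum of the resulting rates, which is precisely $c(n,\delta,\alpha,\beta)$. Your additional care in verifying the boundedness of $P_{s-r}$ and $hP_{s-r}$ as operators on $C_b^m$ is the right bookkeeping and matches the paper's (largely implicit) reasoning.
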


In what follows, we will discuss $c(n,\delta,\alpha,\beta)$ to obtain the $L^2$-convergence rate of the approximation process $\rho_t^n$.

When $\alpha=0$ in \eqref{eq.appro_with_variance}, the component Gaussian measures have null covariance matrices, in other words they are Dirac measures.  In this case $\rho_n$ is nothing other than the classic particle filter (see, for example, \cite{Bain and Crisan}). In this case several terms in $c(n,\delta,\alpha)$ coming from the covariance term disappear. The rate of convergence $c(n,\delta,0)$ becomes:  
$$
c(n,\delta,0)=\max\left\{\frac{1}{n},\ \frac{1}{n\sqrt\delta}\right\}.
$$
Obviously the fastest rate is obtained when $\delta$ is a fixed constant independent of $n$\(.\) The $L^2$-convergence rate will be in this case of order  $1/n$, which coincides with the results in \cite{Bain and Crisan}.

When $\alpha\in(0,1]$, the rate of convergence can deteriorate. First of all let us observe that we still need to choose  $\delta$ to be a fixed constant independent of $n$\(.\) Then the convergence depends on the simpler coefficient $c(n,\alpha)$ given by 
\[
c(n,\alpha,\beta)
=\max\left\{\frac{1}{n},\  \alpha^2,\ (\alpha\beta)^2,\ \frac{\alpha}{n}\right\}\nonumber
\]
In this case we need to choose $\alpha={1\over\sqrt n}$ (or of order $1/\sqrt n$) and $\beta$ to be a fixed constant independent of $n$ to ensure the optimal rate of convergence, which equals $1/n$. This discussion therefore leads to the following convergence result:
\begin{corollary}\label{thm.convergence_of_rho_t_n}
For any $T\geq0$, $m\geq6$, there exists constant $c_8(T)$ independent of $n$, such that for any $\varphi\in C_b^m(\mathbb{R})$, $t\in[0,T]$ and $\alpha\propto \frac{1}{\sqrt n}$ (defined in \eqref{eq.appro_with_variance}), we have
\begin{equation}\label{eq.convergence_of_rho_t_n}
\tilde{\mathbb{E}}\left[(\rho_t^n(\varphi)-\rho_t(\varphi))^2\right]\leq\frac{c_8(T)}{n}\Vert\varphi\Vert_{m,\infty}^2.
\end{equation}
\end{corollary}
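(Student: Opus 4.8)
The plan is to obtain the corollary as a direct specialisation of Theorem \ref{firstthm.convergence_of_rho_t_n}. That theorem already provides the master bound $\tilde{\mathbb{E}}[(\rho_t^n(\varphi)-\rho_t(\varphi))^2]\leq c_7(T)\|\varphi\|_{m,\infty}^2\, c(n,\delta,\alpha,\beta)$ with a constant $c_7(T)$ that is uniform over $t\in[0,T]$ and independent of $n,\delta,\alpha,\beta$. Hence the entire task reduces to an elementary optimisation: choose the free parameters $\delta$, $\alpha$, $\beta$ so that every one of the five terms inside $c(n,\delta,\alpha,\beta)=\max\{1/n,\,(\alpha\delta)^2,\,1/(n\sqrt\delta),\,(\alpha\beta)^2,\,\alpha/n\}$ is bounded by a constant multiple of $1/n$, and then absorb $c_7(T)$ together with that constant into a single $c_8(T)$.

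Concretely, I would first fix $\delta$ to be a constant independent of $n$ (e.g.\ $\delta=T/N$ with $N$ fixed), so that $\sqrt\delta$ is bounded away from zero and the term $1/(n\sqrt\delta)$ is $O(1/n)$. I would then set $\alpha=\kappa/\sqrt n$ for a constant $\kappa\in(0,1]$, which is precisely the assumption $\alpha\propto 1/\sqrt n$, and take $\beta$ to be a fixed positive constant. A term-by-term check then gives $(\alpha\delta)^2=\kappa^2\delta^2/n=O(1/n)$, $(\alpha\beta)^2=\kappa^2\beta^2/n=O(1/n)$, and $\alpha/n=\kappa/n^{3/2}=O(1/n^{3/2})$, while $1/n$ and $1/(n\sqrt\delta)$ are manifestly $O(1/n)$. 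Taking the maximum yields $c(n,\delta,\alpha,\beta)\leq C/n$ for a constant $C=C(\delta,\kappa,\beta)$ independent of $n$; substituting this into the master bound and writing $c_8(T)=c_7(T)\,C$ completes the argument.

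There is essentially no technical obstacle, since all the analytic work is already contained in the Lemmas and in Theorem \ref{firstthm.convergence_of_rho_t_n}. The only points worth flagging are, first, that the uniformity of the resulting constant over $t\in[0,T]$ is inherited directly from the uniformity of $c_7(T)$, so no additional supremum argument is needed; and second, that the scaling $\alpha\asymp 1/\sqrt n$ is genuinely the binding choice rather than an arbitrary one. Indeed, $\alpha=\kappa/\sqrt n$ is the \emph{largest} admissible order of $\alpha$ for which the Gaussian-covariance terms $(\alpha\delta)^2$ and $(\alpha\beta)^2$ remain at $O(1/n)$: any $\alpha$ decaying more slowly than $1/\sqrt n$ would force these terms to dominate $1/n$ and degrade the rate, whereas faster decay would not improve the rate (which is floored at $1/n$ by the intrinsic Monte Carlo terms) but would erode the smoothing benefit of the Gaussian mixture. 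Thus $\alpha\propto 1/\sqrt n$ is exactly the threshold at which the optimal rate $1/n$ is attained.
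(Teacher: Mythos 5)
Your proposal is correct and follows essentially the same route as the paper: the corollary is obtained there too as a direct specialisation of Theorem \ref{firstthm.convergence_of_rho_t_n}, by fixing $\delta$ and $\beta$ as constants independent of $n$ and taking $\alpha$ of order $1/\sqrt{n}$ so that every term in $c(n,\delta,\alpha,\beta)$ is $O(1/n)$. Your additional observation that $\alpha\asymp 1/\sqrt{n}$ is the largest admissible order matches the paper's own discussion of the optimality of this choice.
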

For the normalised approximating measure $\pi^n$, we have the following main result.
\begin{theorem}\label{thm.convergence_of_pi}
For any $T\geq0$, $m\geq6$, there exists a constant $c_9(T)$ independent
of $n$ such that for $\alpha\propto\frac{1}{\sqrt n}$ and $\varphi\in C_b^m(\mathbb{R})$, we have
\begin{equation}
\tilde{\mathbb{E}}\left[\vert\pi_t^n(\varphi)-\pi_t(\varphi)\vert\right]\leq\frac{c_9(T)}{\sqrt n}\Vert\varphi\Vert_{m,\infty},\quad\quad
t\in[0,T].
\end{equation}
\end{theorem}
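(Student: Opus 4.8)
The plan is to transfer the $L^2$ control of the unnormalised approximation $\rho^n$ afforded by Corollary~\ref{thm.convergence_of_rho_t_n} to the normalised approximation $\pi^n$ via the Kallianpur--Striebel formula $\pi_t(\varphi)=\rho_t(\varphi)/\rho_t(\mathbf 1)$. First I would record the corresponding identity at the discrete level: since $\pi_t^n$ is a genuine probability measure (its weights are normalised and each Gaussian component has unit mass), one has $\rho_t^n(\mathbf 1)=\xi_t^n$ and hence $\pi_t^n(\varphi)=\rho_t^n(\varphi)/\rho_t^n(\mathbf 1)$. The naive difference $\rho_t^n(\varphi)/\rho_t^n(\mathbf 1)-\rho_t(\varphi)/\rho_t(\mathbf 1)$ carries the random, $n$-dependent denominator $\xi_t^n$, whose negative moments are awkward to control. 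The key algebraic step is therefore to rearrange so that only the $n$-independent normalising constant $\rho_t(\mathbf 1)$ appears in the denominator:
\begin{equation*}
\pi_t^n(\varphi)-\pi_t(\varphi)=\frac{\rho_t^n(\varphi)-\rho_t(\varphi)}{\rho_t(\mathbf 1)}+\pi_t^n(\varphi)\,\frac{\rho_t(\mathbf 1)-\rho_t^n(\mathbf 1)}{\rho_t(\mathbf 1)}.
\end{equation*}
The boundedness $|\pi_t^n(\varphi)|\le\|\varphi\|_\infty$ coming from $\pi_t^n$ being a probability measure is exactly what makes this rearrangement useful, as it removes $\xi_t^n$ from the denominator entirely.

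Next I would take absolute values and apply the Cauchy--Schwarz inequality, splitting off the factor $\rho_t(\mathbf 1)^{-1}$:
\begin{equation*}
\tilde{\mathbb E}\big[|\pi_t^n(\varphi)-\pi_t(\varphi)|\big]\le\tilde{\mathbb E}\big[\rho_t(\mathbf 1)^{-2}\big]^{1/2}\Big(\tilde{\mathbb E}\big[(\rho_t^n(\varphi)-\rho_t(\varphi))^2\big]^{1/2}+\|\varphi\|_\infty\,\tilde{\mathbb E}\big[(\rho_t^n(\mathbf 1)-\rho_t(\mathbf 1))^2\big]^{1/2}\Big).
\end{equation*}
The two $L^2$ factors are then each bounded by $c_8(T)^{1/2}n^{-1/2}\|\varphi\|_{m,\infty}$ by Corollary~\ref{thm.convergence_of_rho_t_n}, applied to the test function $\varphi$ in the first factor and to the constant function $\mathbf 1$ in the second (noting $\|\mathbf 1\|_{m,\infty}=1$ and $\|\varphi\|_\infty\le\|\varphi\|_{m,\infty}$). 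This already produces the desired rate $n^{-1/2}$, provided the prefactor $\tilde{\mathbb E}[\rho_t(\mathbf 1)^{-2}]$ is finite and bounded uniformly for $t\in[0,T]$.

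The main obstacle, and the only step specific to the filtering setting, is thus the bound on the inverse second moment of the true normalising constant $\rho_t(\mathbf 1)$. I would argue as follows. By the (unnormalised) Kallianpur--Striebel representation, $\rho_t(\mathbf 1)=\tilde{\mathbb E}[\tilde Z_t\mid\mathcal Y_t]$, where $\tilde Z_t=Z_t^{-1}=\exp(\int_0^t h(X_s)\,dY_s-\tfrac12\int_0^t h(X_s)^2\,ds)$ is the inverse of the density in \eqref{eq.definition_Z}. Conditional Jensen's inequality applied to the convex map $x\mapsto x^{-2}$ gives $\rho_t(\mathbf 1)^{-2}\le\tilde{\mathbb E}[\tilde Z_t^{-2}\mid\mathcal Y_t]$, so by the tower property $\tilde{\mathbb E}[\rho_t(\mathbf 1)^{-2}]\le\tilde{\mathbb E}[\tilde Z_t^{-2}]$. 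Under $\tilde{\mathbb P}$ the observation $Y$ is a Brownian motion independent of $X$; conditioning on the trajectory of $X$, the stochastic integral $\int_0^t h(X_s)\,dY_s$ is centred Gaussian with variance $\int_0^t h(X_s)^2\,ds$, and a direct Gaussian moment computation yields $\tilde{\mathbb E}[\tilde Z_t^{-2}\mid X]=\exp(3\int_0^t h(X_s)^2\,ds)$. Since $h$ is bounded, this is at most $\exp(3T\|h\|_{0,\infty}^2)$ uniformly on $[0,T]$, whence $\tilde{\mathbb E}[\rho_t(\mathbf 1)^{-2}]\le\exp(3T\|h\|_{0,\infty}^2)$. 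Combining the three ingredients and absorbing all constants into $c_9(T)$ completes the proof.
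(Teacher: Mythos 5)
Your proposal is correct and follows essentially the same route as the paper: the same decomposition of $\pi_t^n(\varphi)-\pi_t(\varphi)$ with only $\rho_t(\mathbf 1)$ in the denominator, Cauchy--Schwarz, and two applications of Corollary~\ref{thm.convergence_of_rho_t_n} (to $\varphi$ and to $\mathbf 1$). The only difference is that you supply a self-contained Jensen-plus-Gaussian-moment proof that $\tilde{\mathbb E}\left[\rho_t(\mathbf 1)^{-2}\right]<\infty$, where the paper simply cites Exercise 9.16 of \cite{Bain and Crisan}; your computation is correct.
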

\begin{proof}
Observe that $\rho_t^n(\varphi)=\rho_t^n(\mathbf{1})\pi_t^n(\varphi)$, we then have
\begin{align}
&\pi_t^n(\varphi)-\pi_t(\varphi)
=\left(\rho_t^n(\varphi)-\rho_t(\varphi)\right)(\rho_t(\mathbf{1}))^{-1}-\pi_t^n(\varphi)
\left(\rho_t^n(\mathbf{1})-\rho_t(\mathbf{1})\right)(\rho_t(\mathbf{1}))^{-1}.\nonumber
\end{align}
Use the fact that $m_t\triangleq\sqrt{\tilde{\mathbb{E}}\left[(\rho_t(\mathbf{1}))^{-2}\right]}<\infty$
(see Exercise 9.16 of \cite{Bain and Crisan} for details), and by Cauchy-Schwartz
inequality:
\begin{align}\label{eq.difference_between_pi_t_n_and_pi_t}
&\tilde{\mathbb{E}}\left[\vert\pi_t^n(\varphi)-\pi_t(\varphi)\vert\right]\nonumber\\
\leq&m_t\left(\sqrt{\tilde{\mathbb{E}}\left[(\rho_t^n(\varphi)-\rho_t(\varphi))^2\right]}+\Vert\varphi\Vert_{0,\infty}\sqrt{\tilde{\mathbb{E}}\left[(\rho_t^n(\mathbf{1})-\rho_t(\mathbf{1}))^2\right]}\right),
\end{align}
and the result follows by applying Corollary \ref{thm.convergence_of_rho_t_n}
to the two expectations of the right hand side of \eqref{eq.difference_between_pi_t_n_and_pi_t}.
\end{proof}
A stronger convergence result for $\rho_t^n$ and $\pi_t^n$ will be proved in the following two propositions,
from which we can see that their convergence are uniform in time $t$. 
\begin{proposition}\label{thm.uniform_convergence_of_rho}
For any $T\geq0$, $m\geq6$, there exists a constant $c_{10}(T)$ independent
of $n$ such that for any $\varphi\in C_b^{m+2}(\mathbb{R})$,
\begin{equation}\label{eq.uniform_convergence_of_rho}
\tilde{\mathbb{E}}\left[\sup_{t\in[0,T]}(\rho_t^n(\varphi)-\rho_t(\varphi))^2\right]\leq\frac{c_{10}(T)}{n}\Vert\varphi\Vert_{m+2,\infty}^2.
\end{equation}
\end{proposition}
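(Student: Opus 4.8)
The plan is to deduce the uniform bound from the already-established pointwise bound of Corollary \ref{thm.convergence_of_rho_t_n} by feeding it into the time integrals, letting the supremum be absorbed by maximal inequalities. Subtracting the Zakai equation \eqref{eq.zakai_equation} from the evolution equation \eqref{eq.integral_form_of_rho_t_n_varphi} gives, for every $t\in[0,T]$,
\begin{align*}
(\rho_t^n-\rho_t)(\varphi)=&\,(\rho_0^n-\rho_0)(\varphi)+\int_0^t(\rho_r^n-\rho_r)(A\varphi)\,dr\\
&+\int_0^t(\rho_r^n-\rho_r)(h\varphi)\,dY_r+M_{[t/\delta]}^{n,\varphi}+B_t^{n,\varphi}.
\end{align*}
Applying $\sup_{t\in[0,T]}$, squaring, and using $(\sum_{i=1}^5 x_i)^2\le 5\sum_{i=1}^5 x_i^2$ reduces the claim to bounding the expected running supremum of each of the five terms. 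The guiding observation is that the running supremum costs nothing on the martingale terms, since the Burkholder--Davis--Gundy (BDG) inequality already controls $\tilde{\mathbb E}[\sup_{t}(\cdot)^2]$, and on the discrete martingale part of $M$ Doob's maximal inequality does the same.

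First I would dispose of the three ``difference'' terms. The initial term is constant in $t$ and is controlled directly by Lemma \ref{lem.bound_for_initial_condition} with $p=2$. For the drift term I would use Cauchy--Schwarz, $\sup_{t\le T}\big(\int_0^t(\rho_r^n-\rho_r)(A\varphi)\,dr\big)^2\le T\int_0^T\big((\rho_r^n-\rho_r)(A\varphi)\big)^2\,dr$, then take expectations and apply Fubini together with Corollary \ref{thm.convergence_of_rho_t_n} to the test function $A\varphi$; since $\|A\varphi\|_{m,\infty}\le C\|\varphi\|_{m+2,\infty}$, this is of order $\tfrac1n\|\varphi\|_{m+2,\infty}^2$. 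This loss of two derivatives is exactly why the hypothesis is $\varphi\in C_b^{m+2}$ rather than $C_b^m$. For the $dY_r$ term I would invoke BDG to bound $\tilde{\mathbb E}\big[\sup_{t\le T}\big(\int_0^t(\rho_r^n-\rho_r)(h\varphi)\,dY_r\big)^2\big]$ by a constant times $\tilde{\mathbb E}\big[\int_0^T\big((\rho_r^n-\rho_r)(h\varphi)\big)^2\,dr\big]$, and then Fubini and Corollary \ref{thm.convergence_of_rho_t_n} (now with $h\varphi$, and $\|h\varphi\|_{m,\infty}\le C\|\varphi\|_{m+2,\infty}$) close the estimate at order $\tfrac1n$. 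There is no circularity: the uniform-in-time bound is obtained from the pointwise bound simply by integrating it in $r$.

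The terms $B_t^{n,\varphi}$ and $M_{[t/\delta]}^{n,\varphi}$ are treated exactly as in Lemmas \ref{lem.bound_one}--\ref{lem.bound_M}, the only addition being the insertion of $\sup_{t\le T}$. For $B_t^{n,\varphi}$, the $R^1\,dr$ part satisfies $\sup_t|\int_0^t\cdots\,dr|\le\int_0^T|\cdots|\,dr$, so the Jensen/Fubini computation of Lemma \ref{lem.bound_one} applies unchanged, while the $R^2\,dY_r$ and $R^3\,dV_r^{(j)}$ parts are martingales whose estimates in Lemmas \ref{lem.bound_two} and \ref{lem.bound_three} already rest on BDG and hence hold verbatim with the running supremum. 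For $M_{[t/\delta]}^{n,\varphi}=A_{[t/\delta]}^{n,\varphi}+D_{[t/\delta]}^{n,\varphi}+G_{[t/\delta]}^{n,\varphi}$, I would note that, in the branching index $k=[t/\delta]$, both $A_k^{n,\varphi}$ and $G_k^{n,\varphi}$ are discrete martingales with respect to the branching filtration (the offspring fluctuations $o_j^{n,i\delta}-n\bar a_j^n(i\delta-)$ and the centred Gaussian evaluations have conditional mean zero), so Doob's maximal inequality converts the terminal bounds of Lemma \ref{lem.bound_M} into bounds on $\tilde{\mathbb E}[\max_{k\le N}(\cdot)^2]$. Collecting the five contributions under the calibration $\delta$ fixed, $\alpha\propto 1/\sqrt n$, $\beta$ fixed makes each term of order $\tfrac1n\|\varphi\|_{m+2,\infty}^2$, which yields \eqref{eq.uniform_convergence_of_rho}.

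The step I expect to be the main obstacle is the bias term $D_{[t/\delta]}^{n,\varphi}$, which is \emph{not} a martingale, so Doob does not apply. The plan there is to bound $\max_{k\le N}|D_k^{n,\varphi}|\le\sum_{i=1}^N|\Delta D_i|$ by the sum of absolute increments and to verify that the estimate of Lemma \ref{lem.bound_M} — which already proceeds through such an absolute-increment bound of order $\alpha\beta$ per branching time — survives the maximum over branching times without degrading the overall $1/n$ rate.
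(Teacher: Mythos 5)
Your proposal is correct and follows essentially the same route as the paper's proof: the same five-term decomposition of $(\rho_t^n-\rho_t)(\varphi)$, Cauchy--Schwarz/Fubini plus Corollary \ref{thm.convergence_of_rho_t_n} applied to $A\varphi$ and $h\varphi$ (whence the two extra derivatives), Burkholder--Davis--Gundy for the $dY$ integral, Lemmas \ref{lem.bound_one}--\ref{lem.bound_M} for $B^{n,\varphi}$ and $M^{n,\varphi}$, and a maximal inequality for the discrete-time jump part. The one point where you are in fact more careful than the paper is the bias term $D_{[t/\delta]}^{n,\varphi}$: the paper applies Doob's maximal inequality to all of $M^{n,\varphi}$, even though the variance-inflation bias carried by $D$ means $M^{n,\varphi}$ is not exactly a martingale, whereas your absolute-increment bound for $D$ (which is precisely how Lemma \ref{lem.bound_M} already estimates it) closes that gap without degrading the $1/n$ rate.
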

\begin{proof}
By Proposition \ref{prop.equation_for_rho_t_n} and the fact that $\rho_t(\varphi)$
satisfies Zakai equation, we have
\begin{align}\label{eq.rho_t_n_varphi_minus_rho_t_varphi}
\rho_t^n(\varphi)-\rho_t(\varphi)& =(\pi_0^n(\varphi)-\pi_0(\varphi))\nonumber\\
&+\int_0^t(\rho_s^n(A\varphi)-\rho_s(A\varphi))ds+ \int_0^t(\rho_s^n(h\varphi)-\rho_s(h\varphi))dY_s+M_{[t/\delta]}^{n,\varphi}\nonumber\\
&+\frac{1}{n}\sum_{j=1}^n\sum_{i=0}^{\infty}\int_{i\delta\wedge t}^{(i+1)\delta\wedge
t}\xi_{i\delta}^na_j^n(s)\Big[R_{s,j}^1(\varphi)ds+R_{s,j}^2(\varphi)dY_s+R_{s,j}^3(\varphi)dV_s^{(j)}\Big].
\end{align}
By  Lemmas \ref{lem.bound_one} -- \ref{lem.bound_three}, we know that,
\begin{equation*}
\tilde{\mathbb{E}}\left[\sup_{t\in[0,T]}\left(\frac{1}{n}\sum_{j=1}^n\sum_{i=0}^{\infty}\int_{i\delta\wedge
t}^{(i+1)\delta\wedge t}\xi_{i\delta}^na_j^n(r)R_{s,j}^1(\varphi)ds\right)^2\right]\leq
c_1(T)(\alpha\delta)^2\|\varphi\|_{6,\infty}^2;
\end{equation*}
\begin{equation*}
\tilde{\mathbb{E}}\left[\sup_{t\in[0,T]}\left(\frac{1}{n}\sum_{j=1}^n\sum_{i=0}^{\infty}\int_{i\delta\wedge
t}^{(i+1)\delta\wedge t}\xi_{i\delta}^na_j^n(r)R_{s,j}^2(\varphi)dY_s\right)^2\right]
\leq c_2(T)(\alpha\delta)^2\|\varphi\|_{4,\infty}^2;
\end{equation*}
\begin{equation*}
\tilde{\mathbb{E}}\left[\sup_{t\in[0,T]}\left(\frac{1}{n}\sum_{j=1}^n\sum_{i=0}^{\infty}\int_{i\delta\wedge
t}^{(i+1)\delta\wedge t}\xi_{i\delta}^na_j^n(r)R_{s,j}^3(\varphi)dV_s^{(j)}\right)^2\right]
\leq\frac{c_3(T)}{n}\|\varphi\|_{5,\infty}^2.
\end{equation*}
By Doob's maximal inequality and Lemma \ref{lem.bound_M}
\begin{equation*}
\tilde{\mathbb{E}}\left[\max_{i=1,\ldots,[T/\delta]}(M_i^{n,\varphi})^2\right]\leq4\tilde{\mathbb{E}}\left[\left(M_{[T/\delta]}^{n,\varphi}\right)^2\right]\leq\frac{4c_M([T/\delta])}{n}\Vert\varphi\Vert_{1,\infty}^2;
\end{equation*}

Now we only need to bound the first three terms on the right-hand side of
\eqref{eq.rho_t_n_varphi_minus_rho_t_varphi}. For the first term, using the
mutual independence of the initial locations of the particles $v_j^n(0)$,
\begin{equation*}
\tilde{\mathbb{E}}\left[\right(\pi_0^n(\varphi)-\pi_0(\varphi))^2]=\frac{1}{n}\left(\pi_0(\varphi^2)-\pi_0(\varphi)^2\right)\leq\frac{1}{n}\Vert\varphi\Vert_{2,\infty}^2.
\end{equation*} 
For the second term, by Jensen's inequality and Fubini's Theorem,
together with Corollary \ref{thm.convergence_of_rho_t_n}, we have
\begin{align}
& \tilde{\mathbb{E}}\left[\sup_{t\in[0,T]}\left(\int_0^t(\rho_s^n(A\varphi)-\rho_s(A\varphi))ds\right)^2\right]
\leq T\int_0^T\tilde{\mathbb{E}}\left[(\rho_s^n(A\varphi)-\rho_s(A\varphi))^2\right]ds\nonumber
\leq\frac{c_8(T)T^2}{n}\Vert\varphi\Vert_{m+2,\infty}^2.\nonumber
\end{align}
For the third term, similarly, by Burkholder-Davis-Gundy inequality and Fubini's
Theorem, together with Theorem \ref{thm.convergence_of_rho_t_n}, we have
\begin{align}
& \tilde{\mathbb{E}}\left[\sup_{t\in[0,T]}\left(\int_0^t(\rho_s^n(h\varphi)-\rho_s(h\varphi))dY_s\right)^2\right]\leq
\tilde{C}\tilde{\mathbb{E}}\left[\int_0^T(\rho_s^n(h\varphi)-\rho_s(h\varphi))^2ds\right]\nonumber
\leq\frac{\tilde{C}c_8(T)T\Vert
h\Vert_{0,\infty}^2}{n}\Vert\varphi\Vert_{m,\infty}^2.\nonumber
\end{align}
The above obtained  bounds together imply \eqref{eq.uniform_convergence_of_rho}.
\end{proof}
Similar to the proof of Theorem \ref{thm.convergence_of_pi}, we can show the following proposition.
\begin{proposition}\label{thm.uniform_convergence_of_pi}
For any $T\geq0$, $m\geq6$, there exists a constant $c_{11}(T)$ independent
of $n$ such that for and $\varphi\in C_b^{m+2}(\mathbb{R})$,
\begin{equation}
\tilde{\mathbb E}\left[\sup_{t\in[0,T]}\left\vert\pi_t^n(\varphi)-\pi_t(\varphi)\right\vert\right]\leq\frac{c_{11}(T)}{\sqrt n}\Vert\varphi\Vert_{m+2,\infty}.
\end{equation}
\end{proposition}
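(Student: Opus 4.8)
The plan is to mirror the argument used in the proof of Theorem \ref{thm.convergence_of_pi}, but now keeping the supremum over $t\in[0,T]$ inside the expectation and invoking the stronger uniform bound from Proposition \ref{thm.uniform_convergence_of_rho} in place of the pointwise bound from Corollary \ref{thm.convergence_of_rho_t_n}. First I would use the algebraic identity already exploited in Theorem \ref{thm.convergence_of_pi}: writing $\rho_t^n(\varphi)=\rho_t^n(\mathbf 1)\pi_t^n(\varphi)$, decompose the error as
\begin{equation*}
\pi_t^n(\varphi)-\pi_t(\varphi)=\bigl(\rho_t^n(\varphi)-\rho_t(\varphi)\bigr)\bigl(\rho_t(\mathbf 1)\bigr)^{-1}-\pi_t^n(\varphi)\bigl(\rho_t^n(\mathbf 1)-\rho_t(\mathbf 1)\bigr)\bigl(\rho_t(\mathbf 1)\bigr)^{-1}.
\end{equation*}
Since $\pi_t^n$ is a probability measure we have $|\pi_t^n(\varphi)|\leq\|\varphi\|_{0,\infty}$, so taking absolute values and bounding the supremum of the sum by the sum of the suprema gives
\begin{equation*}
\sup_{t\in[0,T]}\bigl|\pi_t^n(\varphi)-\pi_t(\varphi)\bigr|\leq\Bigl(\sup_{t\in[0,T]}\bigl(\rho_t(\mathbf 1)\bigr)^{-1}\Bigr)\Bigl(\sup_{t\in[0,T]}\bigl|\rho_t^n(\varphi)-\rho_t(\varphi)\bigr|+\|\varphi\|_{0,\infty}\sup_{t\in[0,T]}\bigl|\rho_t^n(\mathbf 1)-\rho_t(\mathbf 1)\bigr|\Bigr).
\end{equation*}

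Next I would take $\tilde{\mathbb E}$ of both sides and apply the Cauchy--Schwarz inequality, separating the factor involving $(\rho_t(\mathbf 1))^{-1}$ from the two difference factors. The key auxiliary fact here is the uniform-in-time moment bound $\tilde m_T\triangleq\sqrt{\tilde{\mathbb E}\bigl[\sup_{t\in[0,T]}(\rho_t(\mathbf 1))^{-2}\bigr]}<\infty$, which is the natural strengthening of the bound $m_t=\sqrt{\tilde{\mathbb E}[(\rho_t(\mathbf 1))^{-2}]}<\infty$ used in Theorem \ref{thm.convergence_of_pi} and which can be obtained from the same estimates in Exercise 9.16 of \cite{Bain and Crisan}. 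With this in hand,
\begin{equation*}
\tilde{\mathbb E}\Bigl[\sup_{t\in[0,T]}\bigl|\pi_t^n(\varphi)-\pi_t(\varphi)\bigr|\Bigr]\leq\tilde m_T\Bigl(\sqrt{\tilde{\mathbb E}\bigl[\sup_{t\in[0,T]}(\rho_t^n(\varphi)-\rho_t(\varphi))^2\bigr]}+\|\varphi\|_{0,\infty}\sqrt{\tilde{\mathbb E}\bigl[\sup_{t\in[0,T]}(\rho_t^n(\mathbf 1)-\rho_t(\mathbf 1))^2\bigr]}\Bigr).
\end{equation*}

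Finally I would apply Proposition \ref{thm.uniform_convergence_of_rho} to each of the two square roots on the right: the first with test function $\varphi$ and the second with the constant function $\mathbf 1$ (noting $\|\mathbf 1\|_{m+2,\infty}$ is a fixed constant). Each yields a bound of order $\|\varphi\|_{m+2,\infty}/\sqrt n$, and absorbing all constants (including $\tilde m_T$) into a single $c_{11}(T)$ completes the proof. The main obstacle I anticipate is the passage from the pointwise bound $\tilde{\mathbb E}[(\rho_t(\mathbf 1))^{-2}]<\infty$ to its uniform-in-time version $\tilde{\mathbb E}[\sup_{t\in[0,T]}(\rho_t(\mathbf 1))^{-2}]<\infty$; this requires controlling the running infimum of $\rho_t(\mathbf 1)$ over $[0,T]$ rather than at a single time, which one handles via the exponential-martingale structure of $\rho_t(\mathbf 1)$ together with a Burkholder--Davis--Gundy or maximal-inequality argument of the type already cited in \cite{Bain and Crisan}. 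Everything else is the same Cauchy--Schwarz manipulation as in Theorem \ref{thm.convergence_of_pi}, now upgraded through the supremum.
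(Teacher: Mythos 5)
Your proposal is correct and is exactly the argument the paper intends: the paper omits the proof entirely, saying only that it is ``similar to the proof of Theorem \ref{thm.convergence_of_pi}'', and your write-up supplies precisely that — the same decomposition via $\rho_t^n(\varphi)=\rho_t^n(\mathbf 1)\pi_t^n(\varphi)$ and Cauchy--Schwarz, with the pointwise bound of Corollary \ref{thm.convergence_of_rho_t_n} replaced by the uniform bound of Proposition \ref{thm.uniform_convergence_of_rho}. You also correctly identify the one point needing care, namely upgrading $\tilde{\mathbb E}[(\rho_t(\mathbf 1))^{-2}]<\infty$ to its supremum-in-time version, which follows from the exponential-martingale representation of $\rho_t(\mathbf 1)$ with bounded $h$ as you indicate.
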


\begin{remark}The fact that the optimal value for $\alpha$ decreases with $n$ is not surprising. As the number of particles increases, the quantisation of the posterior distribution becomes finer and finer. Therefore, 
\emph{asymptotically}, the position and the weight of the particle provide  sufficient information to obtain a good approximation. In other words, asymptotically the classic particle filter is optimal.  
\end{remark}
\begin{remark}
Since the approximations $\rho_t^n$ and $\pi_t^n$ have smooth densities with respect to the Lebesgue measure, it makes it possible to study various properties of the density of $\rho_t$ from its approximation $\rho_t^n$ (for example, the position of their maximum value, the decay in time, the properties of their derivatives, etc).  This would be possible under the classic particle filtering framework, where the approximations are linear combinations of Dirac measures, only if a smoothing procedure is applied first (see \cite{Crisan and Miguez}).
\end{remark}

So far, the convergence results and $L^2$-error are obtained under probability $\tilde{\mathbb P}$; however, it is more natural to investigate these results under the original probability $\mathbb P$. The following proposition states the $L^2$-convergence result under $\mathbb P$.

\begin{proposition}
For any $T\geq0$, $m\geq6$, there exists
constant $c_{12}(T)$ independent of $n$, such that for any $\varphi\in C_b^m(\mathbb{R})$,
$t\in[0,T]$ and $\alpha\propto \frac{1}{\sqrt n}$ (defined in \eqref{eq.appro_with_variance}),
we have
\begin{equation}
{\mathbb{E}}\left[|\rho_t^n(\varphi)-\rho_t(\varphi)|\right]\leq\frac{c_{12}(T)}{\sqrt n}\Vert\varphi\Vert_{m,\infty}.
\end{equation}
\end{proposition}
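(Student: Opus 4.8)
The plan is to transfer the $L^2$-estimate already obtained under $\tilde{\mathbb P}$ in Corollary \ref{thm.convergence_of_rho_t_n} to an $L^1$-estimate under the original measure $\mathbb P$ by a change of measure followed by the Cauchy--Schwarz inequality. Since $d\tilde{\mathbb P}/d\mathbb P=Z_t$ on $\mathcal F_t$ and both $\rho_t^n(\varphi)$ and $\rho_t(\varphi)$ are $\mathcal F_t$-measurable, for any integrable $\mathcal F_t$-measurable $G$ one has $\mathbb E[G]=\tilde{\mathbb E}[G\,Z_t^{-1}]$. Applying this with $G=|\rho_t^n(\varphi)-\rho_t(\varphi)|$ and then Cauchy--Schwarz under $\tilde{\mathbb P}$ gives
\begin{equation}
\mathbb E\left[|\rho_t^n(\varphi)-\rho_t(\varphi)|\right]
=\tilde{\mathbb E}\left[|\rho_t^n(\varphi)-\rho_t(\varphi)|\,Z_t^{-1}\right]
\leq\sqrt{\tilde{\mathbb E}\left[(\rho_t^n(\varphi)-\rho_t(\varphi))^2\right]}\;\sqrt{\tilde{\mathbb E}\left[Z_t^{-2}\right]}.
\end{equation}
By Corollary \ref{thm.convergence_of_rho_t_n} the first factor is bounded by $\sqrt{c_8(T)/n}\,\|\varphi\|_{m,\infty}$, so the whole task reduces to showing that the second factor is bounded by a constant depending only on $T$.

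To control $\tilde{\mathbb E}[Z_t^{-2}]$, I would re-express $Z_t^{-1}$ in terms of the $\tilde{\mathbb P}$-Brownian motion $Y$. Substituting $dW_s=dY_s-h(X_s)\,ds$ into \eqref{eq.definition_Z} and recombining the drift terms yields
\begin{equation}
Z_t^{-1}=\exp\left(\sum_{i=1}^m\int_0^t h^i(X_s)\,dY_s^i-\frac12\sum_{i=1}^m\int_0^t h^i(X_s)^2\,ds\right),
\end{equation}
which is the Dol\'eans--Dade exponential of the $\tilde{\mathbb P}$-local martingale $\sum_i\int_0^\cdot h^i(X_s)\,dY_s^i$. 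Squaring and completing the exponent into the exponential martingale $\mathcal E_t$ built from the integrand $2h$ leaves a correction factor, and since $h$ is bounded,
\begin{equation}
Z_t^{-2}=\mathcal E_t\,\exp\left(\sum_{i=1}^m\int_0^t h^i(X_s)^2\,ds\right)\leq\mathcal E_t\,\exp\left(mT\max_{k}\|h_k\|_{0,\infty}^2\right).
\end{equation}
Boundedness of $h$ makes Novikov's condition trivial, so $\mathcal E$ is a genuine $\tilde{\mathbb P}$-martingale with $\tilde{\mathbb E}[\mathcal E_t]=1$; hence $\tilde{\mathbb E}[Z_t^{-2}]\leq\exp(mT\max_k\|h_k\|_{0,\infty}^2)=:K_T$ uniformly in $t\in[0,T]$.

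Combining the two bounds gives $\mathbb E[|\rho_t^n(\varphi)-\rho_t(\varphi)|]\leq\sqrt{c_8(T)K_T/n}\,\|\varphi\|_{m,\infty}$, which is the claim with $c_{12}(T)=\sqrt{c_8(T)K_T}$. The only step that requires genuine care is the uniform moment bound $\tilde{\mathbb E}[Z_t^{-2}]\leq K_T$: the key points there are the re-expression of $Z^{-1}$ through the observation martingale under $\tilde{\mathbb P}$ and the use of the boundedness of $h$ to dominate the quadratic-variation correction term. Everything else is a direct application of the already-established $\tilde{\mathbb P}$-estimate together with Cauchy--Schwarz, so I do not expect any substantial obstacle beyond verifying this integrability of the inverse density.
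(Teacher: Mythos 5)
Your proof is correct and follows essentially the same route as the paper: change of measure via $d\mathbb P/d\tilde{\mathbb P}|_{\mathcal F_t}=Z_t^{-1}=\tilde Z_t$, Cauchy--Schwarz under $\tilde{\mathbb P}$, and the $L^2$-bound from Corollary \ref{thm.convergence_of_rho_t_n}. The only difference is that you explicitly verify the uniform bound $\tilde{\mathbb E}[\tilde Z_t^{2}]\leq K_T$ via the exponential-martingale factorisation and Novikov's condition, whereas the paper simply asserts the existence of the constant $c_{\tilde z}$; your verification is correct and fills in that detail.
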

\begin{proof}
Recalling the derivation of the new probability $\tilde{\mathbb P}$, we know that
\begin{equation}\label{eq.Z_tilde}
\tilde Z_t=\exp\left(\int_0^t h(X_s)dY_s-\frac{1}{2}\int_0^t
h^2(X_s)ds\right)\quad (t\geq0)
\end{equation}
is an  $\mathcal{F}_t$-adapted
martingale under $\tilde{\mathbb{P}}$ and
$$
\frac{d\mathbb{P}}{d\tilde{\mathbb{P}}}\Bigg\vert_{\mathcal{F}_t}=\tilde{Z}_t\quad
t\geq0.
$$
Therefore
\begin{align}
\mathbb E\left[|\rho_t^n(\varphi)-\rho_t(\varphi)|\right]=&\tilde{\mathbb E}\left[|\rho_t^n(\varphi)-\rho_t(\varphi)|\tilde Z_t\right]\nonumber
\leq\sqrt{\tilde{\mathbb E}\left[|\rho_t^n(\varphi)-\rho_t(\varphi)|^2\right]\tilde{\mathbb E}\left[(\tilde Z_t)^2\right]}\nonumber
\leq\sqrt{\frac{c_8(T)c_{\tilde z}}{n}}\|\varphi\|_{m,\infty}.
\end{align}
The result follows by letting $c_{12}(T)=\sqrt{c_8(T)c_{\tilde z}}$.
\end{proof}

\begin{remark}
If the correction mechanism is done using the Tree Based Branching Algorithm (TBBA), $L^p$-convergence of $\rho^n$ to $\rho$ cannot be generally obtained. This is because, in general, we do not have a control on the $p$th moment of $M_{[t/\delta]}^{n,\varphi}$ under $\tilde{\mathbb P}$. As a result, one can only obtain $L^1$-convergence result for $\rho^n$ under the original probability $\mathbb P$.
\end{remark}

\subsection{Convergence Results using the Multinomial Branching Algorithm}

In this section we show the convergence result for the case where the resampling is conducted by using Multinomial branching algorithm. The results in this section can be obtained in a similar manner as previous section, therefore the theorems in this section are only stated without proof. See \cite{Li} for detailed discussion and proofs of these results.

\begin{theorem}\label{thm.convergence_of_rho_t_n_multinomial}
For any $T\geq0$, $m\geq6$, there exist
constants $c_{13}(T)$ and $c_{14}(T)$ independent of $n$, such that for any $\varphi\in C_b^m(\mathbb{R})$,
$t\in[0,T]$ and $\alpha\propto \frac{1}{\sqrt n}$ (defined in \eqref{eq.appro_with_variance}),
we have
\begin{align}\label{eq.convergence_of_rho_t_n_multinomial}
\tilde{\mathbb{E}}\left[(\rho_t^n(\varphi)-\rho_t(\varphi))^2\right]&\leq\frac{c_{13}(T)}{n}\Vert\varphi\Vert_{m,\infty}^2;\\
\tilde{\mathbb{E}}\left[\vert\pi_t^n(\varphi)-\pi_t(\varphi)\vert\right]&\leq\frac{c_{14}(T)}{\sqrt
n}\Vert\varphi\Vert_{m,\infty}.
\end{align}
\end{theorem}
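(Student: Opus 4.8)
The plan is to follow the TBBA argument of Section 4.2 essentially verbatim, isolating the single place where the multinomial mechanism enters and recomputing only that estimate. The starting point is to note that neither the evolution equation of Proposition \ref{prop.equation_for_rho_t_n} nor the semigroup representation \eqref{eq.decomposition_of_the_difference} of the error $(\rho_t^n-\rho_t)(P_{s-t}\varphi)$ make any reference to how the offspring numbers $o_j^{n,i\delta}$ are produced: both are consequences of the between-branching dynamics \eqref{eq.appro_with_variance} and of the bookkeeping identity $\pi_{i\delta}^n=\frac1n\sum_j o_j^{n,i\delta}\Gamma_{X_j^n(i\delta),\alpha\beta}$. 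Consequently, Lemma \ref{lem.bound_for_initial_condition} and Lemmas \ref{lem.bound_one}--\ref{lem.bound_three} (the $R^1,R^2,R^3$ contributions to $B_t^{n,\varphi}$) hold without any change, since their proofs use only the deterministic pathwise bounds on $R_{s,j}^k$ together with the moment estimates \eqref{eq.bound_for_xi_2} for $\xi^n$.

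First I would re-examine the three pieces $A^{n,\varphi}$, $D^{n,\varphi}$, $G^{n,\varphi}$ of the jump martingale $M^{n,\varphi}$ and observe that only the first is sensitive to the branching law. The term $G^{n,\varphi}$ depends on the offspring numbers only through their conditional mean $n\bar a_j^n(i\delta-)$, which is common to both mechanisms by Propositions \ref{prop.tbba_proposition} and \ref{prop.multinomial_proposition}; its bound is a statement about the Gaussian sampling fluctuation $\varphi(X_j^n)-\tilde{\mathbb E}\varphi(X_j^n)$ and is therefore unchanged. The term $D^{n,\varphi}$ involves $o_j^{n,i\delta}$ only through the total-mass constraint $\sum_j o_j^{n,i\delta}=n$ and the smoothing error, both shared by the two schemes, so its bound carries over as well. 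Thus the whole of Lemma \ref{lem.bound_M} survives except for the estimate on $\tilde{\mathbb E}[|A_{[t/\delta]}^{n,\varphi}|^2]$.

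The heart of the proof is the recomputation of this $A$-term second moment. Writing each branching increment as $\frac1n\xi_{i\delta}^n\sum_j(o_j^{n,i\delta}-n\bar a_j^n(i\delta-))\varphi(X_j^n(i\delta))$, I would first establish $L^2$-orthogonality of the increments across distinct branching times: conditionally on $\mathcal F_{i\delta-}$ the multinomial counts are centred at $n\bar a_j$ and are independent of the Gaussian draws $X_j^n(i\delta)$, so each increment has conditional mean zero and the cross terms vanish. For a fixed $i$, I would then condition on the enlarged $\sigma$-algebra $\mathcal G=\sigma(\mathcal F_{i\delta-},\{X_j^n(i\delta)\}_j)$, under which the $\varphi(X_j^n(i\delta))$ are bounded constants and $\xi_{i\delta}^n$ is measurable; the multinomial covariance of Proposition \ref{prop.multinomial_proposition} then gives the exact identity
\begin{equation*}
\mathrm{Var}\!\left(\sum_j\varphi(X_j^n)\,o_j^{n,i\delta}\,\Big|\,\mathcal G\right)
=n\left[\sum_j\bar a_j^n(i\delta-)\varphi(X_j^n)^2-\Big(\sum_j\bar a_j^n(i\delta-)\varphi(X_j^n)\Big)^2\right]\le n\|\varphi\|_{0,\infty}^2,
\end{equation*}
where the negative off-diagonal covariances of the multinomial law are precisely what produce the subtracted square. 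Summing over the $[T/\delta]$ branching times and using $\sup_{i}\tilde{\mathbb E}[(\xi_{i\delta}^n)^2]\le c_1^{T,2}$ yields $\tilde{\mathbb E}[|A_{[t/\delta]}^{n,\varphi}|^2]\le \frac{T/\delta}{n}c_1^{T,2}\|\varphi\|_{0,\infty}^2$, which for fixed $\delta$ is of order $1/n$, just as in the TBBA case.

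With the multinomial analogue of Lemma \ref{lem.bound_M} in hand, the remainder is automatic. Feeding the unchanged bounds of Lemmas \ref{lem.bound_for_initial_condition}--\ref{lem.bound_three} and the recomputed $A$-bound into the abstract criterion Theorem \ref{prop.general_convergence_result}, applied to the signed measure $\mu_t^n=\rho_t^n-\rho_t$ through the decomposition \eqref{eq.decomposition_of_the_difference}, and choosing $\alpha\propto 1/\sqrt n$ with $\delta$ and $\beta$ held fixed (exactly as in the passage from Theorem \ref{firstthm.convergence_of_rho_t_n} to Corollary \ref{thm.convergence_of_rho_t_n}), gives the first inequality $\tilde{\mathbb E}[(\rho_t^n(\varphi)-\rho_t(\varphi))^2]\le \frac{c_{13}(T)}{n}\|\varphi\|_{m,\infty}^2$. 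The second inequality follows verbatim from the argument of Theorem \ref{thm.convergence_of_pi}: using $\rho_t^n(\varphi)=\rho_t^n(\mathbf 1)\pi_t^n(\varphi)$ to express $\pi_t^n(\varphi)-\pi_t(\varphi)$ in terms of $(\rho_t^n-\rho_t)(\varphi)$ and $(\rho_t^n-\rho_t)(\mathbf 1)$, then invoking $m_t=\sqrt{\tilde{\mathbb E}[(\rho_t(\mathbf 1))^{-2}]}<\infty$ and Cauchy--Schwarz before applying the first inequality. I expect the only genuinely new work, and hence the main obstacle, to be the conditioning step in the $A$-term estimate: one must condition on the Gaussian draws $X_j^n(i\delta)$ (not merely on $\mathcal F_{i\delta-}$) so that Proposition \ref{prop.multinomial_proposition} is applicable, and verify that this enlargement disturbs neither the centring of the increments nor the $\mathcal F_{i\delta-}$-measurability of $\xi_{i\delta}^n$.
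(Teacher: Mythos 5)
Your proposal is correct and is exactly the route the paper intends: the paper omits the proof of this theorem, saying only that the results ``can be obtained in a similar manner as previous section'' (deferring details to \cite{Li}), and your plan --- keep Lemmas \ref{lem.bound_for_initial_condition}--\ref{lem.bound_three} and the $D$, $G$ estimates of Lemma \ref{lem.bound_M} unchanged, recompute only the $A$-term via the multinomial covariance of Proposition \ref{prop.multinomial_proposition} after conditioning on the Gaussian draws, then feed the bounds into Theorem \ref{prop.general_convergence_result} and repeat the Kallianpur--Striebel argument of Theorem \ref{thm.convergence_of_pi} --- is precisely that adaptation. Your conditional variance identity is right, and the fact that your $A$-bound is of order $T/(n\delta)$ rather than the TBBA order $T/(n\sqrt{\delta})$ is immaterial since $\delta$ is held fixed.
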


In contrast with the discussion under the TBBA, one can show that, if the correction is done using the multinomial branching algorithm, $L^p$-convergence result for $\rho^n$ and $\pi^n$ can be obtained for any $p\geq2$, namely we have the following theorem.

\begin{theorem}
For any $T\geq0$, $m\geq6$, there exists
constants $c_{15}(T)$ and $c_{16}(T)$ independent of $n$, such that for any $\varphi\in C_b^m(\mathbb{R})$,
$t\in[0,T]$ and $\alpha\propto \frac{1}{\sqrt n}$ (defined in \eqref{eq.appro_with_variance}),
we have
\begin{equation}
\tilde{\mathbb{E}}\left[|\rho_t^n(\varphi)-\rho_t(\varphi)|^p\right]\leq\frac{c_{15}(T)}{n^{p/2}}\Vert\varphi\Vert_{m,\infty}^{p/2};
\end{equation}
\begin{equation}
\tilde{\mathbb{E}}\left[|\pi_t^n(\varphi)-\pi_t(\varphi)|^p\right]\leq\frac{c_{16}(T)}{n^{p/2}}\Vert\varphi\Vert_{m,\infty}^{p/2}.
\end{equation}
\end{theorem}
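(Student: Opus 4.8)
The plan is to run exactly the machinery of the two previous subsections, but to upgrade every intermediate estimate from the exponent $2$ to a general exponent $p\geq2$, and then to invoke the abstract criterion of Theorem \ref{prop.general_convergence_result}, which is already formulated for arbitrary $p\geq2$. Concretely I would set $\mu^n=\rho^n-\rho$ and read off from the semigroup representation \eqref{eq.decomposition_of_the_difference} that $\mu^n$ fits the hypothesis of Theorem \ref{prop.general_convergence_result} with $a_s=P_s$, with the single stochastic-integral term $\int_0^t(\rho_r^n-\rho_r)(hP_{s-r}\varphi)\,dY_r$, and with residual terms given by the initial error $(\rho_0^n-\rho_0)(P_s\varphi)$, the three pieces carrying $R^1,R^2,R^3$, and the branching term $M_{[t/\delta]}^{n,P\varphi}$. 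It then suffices to verify the moment hypotheses $\tilde{\mathbb E}[|R_{t,l}^{n,\varphi}|^p]\leq\gamma_l n^{-q_l}\|\varphi\|_{m,\infty}^p$ with every $q_l\geq p/2$ after the choice $\alpha\propto n^{-1/2}$ and $\delta,\beta$ fixed; the theorem then yields $\tilde{\mathbb E}[|\rho_t^n(\varphi)-\rho_t(\varphi)|^p]\leq c_{15}(T)n^{-p/2}\|\varphi\|_{m,\infty}^p$.

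First I would upgrade Lemmas \ref{lem.bound_for_initial_condition}--\ref{lem.bound_three} to $L^p$. The initial error is already handled for every $p\geq1$ by the Marcinkiewicz--Zygmund inequality (Lemma \ref{lem.bound_for_initial_condition}), giving $q_0=p/2$. For the $R^1$ and $R^2$ pieces I would combine the pointwise bounds $|R_{r,j}^1|,|R_{r,j}^2|\lesssim\alpha\delta$ from \eqref{eq.R^1}--\eqref{eq.R^2} with the Burkholder--Davis--Gundy inequality (for the $dY$ integral), Jensen's inequality, and the uniform moment bound \eqref{eq.bound_for_xi_2} on $\xi^n a_j^n$, which holds for all $p\geq1$; this produces the factor $(\alpha\delta)^p$, hence $n^{-p/2}$. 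For the $R^3$ piece the full extra power of $n$ comes, as in Lemma \ref{lem.bound_three}, from the mutual independence of the $V^{(j)}$: the quadratic variation of $\frac1n\sum_j\int\xi a_j^nR_{r,j}^3\,dV_r^{(j)}$ is $\frac1{n^2}\sum_j\int(\xi a_j^nR_{r,j}^3)^2\,dr$, so rewriting $\frac1{n^2}\sum_j=\frac1n\cdot\frac1n\sum_j$ and applying Jensen to the inner average before taking $\tilde{\mathbb E}$ gives, via BDG, an $L^p$ bound of order $n^{-p/2}$.

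The delicate term --- and the step I expect to be the main obstacle --- is the branching contribution $M_{[t/\delta]}^{n,P\varphi}$, and this is precisely where the multinomial resampling is indispensable (the Remark closing Subsection 4.2 records that the corresponding $p$th-moment control fails for the TBBA). I would use the decomposition $M=A+D+G$ introduced in Subsection 4.2. For $A_{[t/\delta]}^{n,\varphi}$ the crucial observation is that, under multinomial resampling, the offspring vector $(o_1^{n,i\delta},\dots,o_n^{n,i\delta})$ is realised as $n$ independent categorical draws from the weights $\bar a_j^n(i\delta-)$; hence, conditionally on $\mathcal F_{i\delta-}$, the inner sum $\sum_j(o_j^{n,i\delta}-n\bar a_j^n(i\delta-))\varphi(X_j^n(i\delta))$ is a sum of $n$ independent, centred terms each bounded by $2\|\varphi\|_\infty$. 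The conditional Marcinkiewicz--Zygmund inequality then bounds its $p$th conditional moment by $C_p n^{p/2}\|\varphi\|_\infty^p$, so the prefactor $1/n$ converts this into $n^{-p/2}$; summing over the fixed number $[T/\delta]$ of branching times and inserting the $\xi^n$-moments \eqref{eq.bound_for_xi_2} (using that $\xi_{i\delta}^n$ is $\mathcal F_{i\delta-}$-measurable and so independent of the branching randomness given $\mathcal F_{i\delta-}$) preserves the order $n^{-p/2}$. The term $G_{[t/\delta]}^{n,\varphi}$ is handled the same way via Rosenthal's inequality, exploiting the conditional independence of the Gaussian samples $X_j^n(i\delta)$ together with the variance factor $\omega_j^n\propto\alpha$, while $D_{[t/\delta]}^{n,\varphi}$ is deterministically of size $\alpha\beta$ by the Taylor remainder underlying \eqref{eq.R^2}; both give at least $n^{-p/2}$. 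This is the heart of the argument, because it is exactly the multinomial ``sum of i.i.d. draws'' structure that makes the higher moments contract at the correct rate, a structure the TBBA does not possess.

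Finally I would transfer the estimate from $\rho^n$ to $\pi^n$ as in the proof of Theorem \ref{thm.convergence_of_pi}. Writing $\pi_t^n(\varphi)-\pi_t(\varphi)=(\rho_t^n(\varphi)-\rho_t(\varphi))(\rho_t(\mathbf 1))^{-1}-\pi_t^n(\varphi)(\rho_t^n(\mathbf 1)-\rho_t(\mathbf 1))(\rho_t(\mathbf 1))^{-1}$ and using $|\pi_t^n(\varphi)|\leq\|\varphi\|_{0,\infty}$, Cauchy--Schwarz gives
\[
\tilde{\mathbb E}\big[|\pi_t^n(\varphi)-\pi_t(\varphi)|^p\big]\leq\big(\tilde{\mathbb E}[(\rho_t(\mathbf 1))^{-2p}]\big)^{1/2}\Big(\tilde{\mathbb E}\big[(|\rho_t^n(\varphi)-\rho_t(\varphi)|+\|\varphi\|_{0,\infty}|\rho_t^n(\mathbf 1)-\rho_t(\mathbf 1)|)^{2p}\big]\Big)^{1/2},
\]
where the negative moment $\tilde{\mathbb E}[(\rho_t(\mathbf 1))^{-2p}]$ is finite by the argument of Exercise 9.16 in \cite{Bain and Crisan}. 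Applying the $L^{2p}$ bound just established for the $\rho^n$-differences to the second factor yields $\tilde{\mathbb E}[|\pi_t^n(\varphi)-\pi_t(\varphi)|^p]\leq c_{16}(T)n^{-p/2}\|\varphi\|_{m,\infty}^p$, completing the proof.
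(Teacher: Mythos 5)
Your proposal is correct and follows exactly the route the paper intends: the paper gives no proof of this theorem, stating only that the results ``can be obtained in a similar manner as previous section'' and deferring to \cite{Li}, and your plan --- upgrading Lemmas \ref{lem.bound_for_initial_condition}--\ref{lem.bound_M} to $L^p$, exploiting that under multinomial resampling the offspring counts arise from $n$ conditionally i.i.d.\ categorical draws so that conditional Marcinkiewicz--Zygmund/Rosenthal inequalities control the $p$th moment of $A^{n,\varphi}$ and $G^{n,\varphi}$ (precisely the structure whose absence for the TBBA is flagged in the remark closing Subsection 4.2), then feeding the bounds into Theorem \ref{prop.general_convergence_result} and transferring to $\pi^n$ via negative moments of $\rho_t(\mathbf 1)$ --- is a faithful and sound execution of that strategy. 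One small point in your favour: by homogeneity in $\varphi$ your argument yields the factor $\Vert\varphi\Vert_{m,\infty}^{p}$ rather than the $\Vert\varphi\Vert_{m,\infty}^{p/2}$ printed in the statement, and the former is the dimensionally correct exponent, so the printed $p/2$ is evidently a typo.
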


\section{A Numerical Example}
In this section, we present some numerical experiments to test the performance of the approximations with mixture of Gaussian measures. The model chosen in this case is the Bene\v s filter. This is a stochastic filtering problem with nonlinear dynamics for the signal process and linear dynamics the observation process, with an analytical finite dimensional solution. The main reason for choosing this model is that it has a sufficient nonlinear behaviour to make it interesting, and more importantly, still has a closed form for its solution.
\subsection{The Model and its Exact Solution}
We assume that both the signal and the observation are one-dimensional. The dynamics of the signal $X$ is given by
\begin{align}
X_t=X_0+\int_0^tf(X_s)ds+\sigma V_t,
\end{align}
where $f(x)=\mu\sigma\tanh(\mu x/\sigma)$. We further assume that the observation $Y$ satisfies
\begin{align}
Y_t=\int_0^th(X_s)ds+W_t,
\end{align}
where $W$ is a standard Brownian motion independent of $V$, and $h(x)=h_1x+h_2$. We also assume that $X_0,\mu,h_1,h_2\in\mathbb R$ and $\sigma>0$.

Then from \cite{Crisan and Ortiz-Latorre} we know that the conditional law of $X_t$ given $\mathcal Y_t\triangleq\sigma(Y_s,0\leq s\leq t)$ has the exact expression of a weight mixture of two Gaussian distributions. In other words, the conditional distribution $\pi_t$ of $X_t$ is
$$
\pi_t= w^+\mathcal N(A^+/(2B_t),1/(2B_t))+w^-\mathcal N(A^-/(2B_t),1/(2B_t)),
$$
where $\mathcal N(\mu,\sigma^2)$ is the normal distribution with mean $\mu$ and variance $\sigma^2$, and
\begin{align}
w^\pm&\triangleq\exp\left((A_t^\pm)^2/(4B_t)\right)/\left[\exp\left((A_t^+)^2/(4B_t)\right)+\exp\left((A_t^-)^2/(4B_t)\right)\right]\nonumber\\
A_t^\pm&\triangleq\pm\frac{\mu}{\sigma}+h_1\Psi_t+\frac{h_1X_0+h_2}{\sigma\sinh(h_1\sigma t)}-\frac{h_2}{\sigma}\coth(h_1\sigma t)\nonumber\\
B_t&\triangleq\frac{h_1}{2\sigma}\coth(h_1\sigma t)\nonumber\\
\Psi_t&\triangleq\int_0^t\frac{\sinh(h_1\sigma s)}{\sinh(h_1\sigma t)}dY_s.\nonumber
\end{align}
We can, however, only observe $Y$ at a finite partition $\Pi^{m,T}=\{0=t_0<t_1<\cdots<t_{m-1}=T\}$ of $[0,T]$ in practice; thus we approximate the integral in the definition of $\Psi_t$ by
$$
\Psi_t\approx\sum_{k=0}^{i-1}\frac{\sinh(h_1\sigma t_{k+1})}{\sinh(h_1\sigma t_i)}(Y_{t_{k+1}}-Y_{t_k}),\quad\text{for}\ t_i\in\Pi^{n,T}.
$$
\subsection{Numerical Simulation Results}
We set values for the parameters $\mu$, $\sigma$, $h_1$, $h_2$, $X_0$ and $T$ as follows:
$$
\mu=0.3,\ h_1=0.8,\ h_2=0.0,\ \sigma=1.0,\ X_0=0.0,\ T=10.0;
$$ 
and then we compute one realisation for $X_t$ and one realisation for $Y_t$ respectively using the Euler scheme with an equidistant partition $\Pi^{m,T}=\{t_i=\frac{i}{m}T\}_{i=0,\ldots,m}$ with $m=10^6$. The realisation for $Y_t$ is then fixed and will act as the given observation path. All the simulations will be done assuming that we are given the previously obtained $Y_t$. With this previously simulated discrete path of $Y$, we can then approximate $\Psi_t$ and consequently compute the values of $A_t^\pm$, $B_t$ and $w_t^\pm$; so that we can compute the conditional law of $X_t$ given $\mathcal Y_t$. At the branching time, we use the Tree Based Branching Algorithm.
We will look at the convergence of the Gaussian mixture approximation and the classic particle filters in terms of the number of time steps in the partition and the number of particles respectively. 

We note that for the test function $\varphi(x)=x$, the Gaussian mixture approximation gives 
$$
\pi_t^n(\varphi)=\sum_{j=1}^n\bar a_j^n(t)\int_{\mathbb R}\left(v_j^n(t)+y\sqrt{\omega_j^n(t)}\right)\frac{1}{\sqrt{2\pi}}\exp\left(-\frac{y^2}{2}\right)dy
=\sum_{j=1}^n\bar a_j^n(t)v_j^n(t).
$$
This is almost the same result as the classic particle filters, except that the evolution equations satisfied by $v_j^n(t)$s are slightly different in two cases (see equation (9.4) in \cite{Bain and Crisan} and equation \eqref{eq.appro_with_variance} for details). It is therefore more interesting to look at $\pi_T(\varphi)$ for $\varphi(x)=x^2$ and $\varphi(x)=x^3$, that is, the second and third moments of the system at time $T$ given the observation $Y$ up to time $T$. To be specific, we estimate $\pi_T(\varphi)$ by $\pi_T^n(\varphi)$ with the number of particles (of Gaussian generalised particles) $n=40000$ and we choose various values for the number of time steps $m$ in the partition. We compute $\pi_T^n(\varphi)$ using classic particles and mixture of Gaussian measures respectively. Instead of the absolute error $\left|\pi_T^n(\varphi)-\pi_T(\varphi)\right|$, we consider the relative error 
$$\frac{\left|\pi_T^n(\varphi)-\pi_T(\varphi)\right|}{|\pi_T(\varphi)|}.$$

The convergence of both methods as the number of discretisation time steps $m$ increases can be seen from the following Figure \ref{fig.timesteps}, and for large number of time steps the Gaussian mixture approximation performs slightly better than the classic particle filters.\\

\begin{figure}[htb]
\centering
\includegraphics[width=150mm]{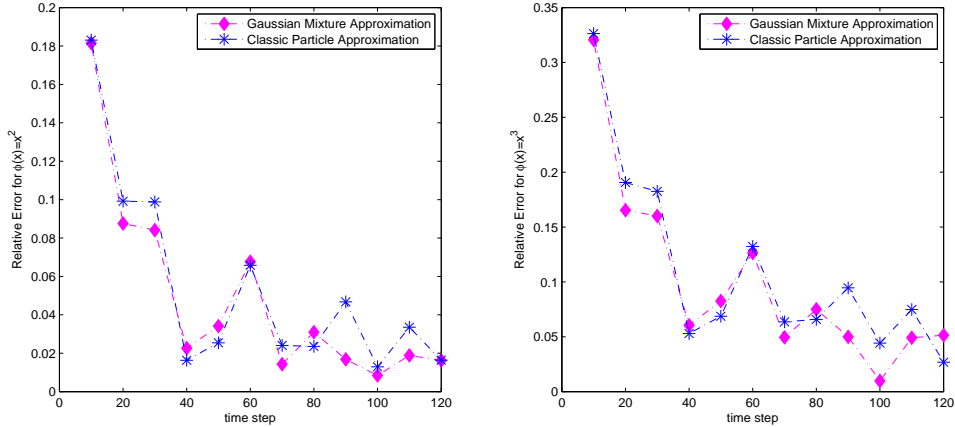}
\caption{Relative Errors with time steps for $\varphi(x)=x^2$ (left) \& $\varphi(x)=x^3$ (right)}
\label{fig.timesteps}
\end{figure}

In the following we fix the number of the discretisation time steps $m=100$ and vary the number of (generalised) particles $n$ in the approximating system. 

\begin{figure}[htb]
\centering
\includegraphics[width=150mm]{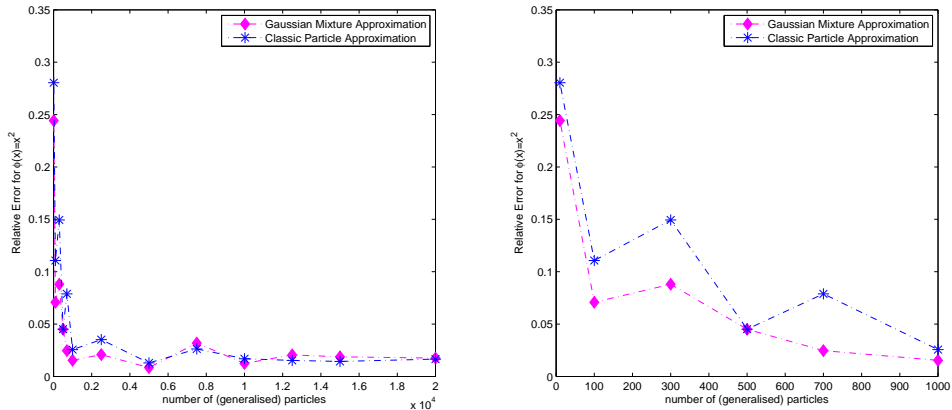}
\caption{Relative Errors with different number of (generalised) particles for $\varphi(x)=x^2$}
\label{fig.X2nParticles}
\end{figure}

\begin{figure}[htb]
\centering
\includegraphics[width=150mm]{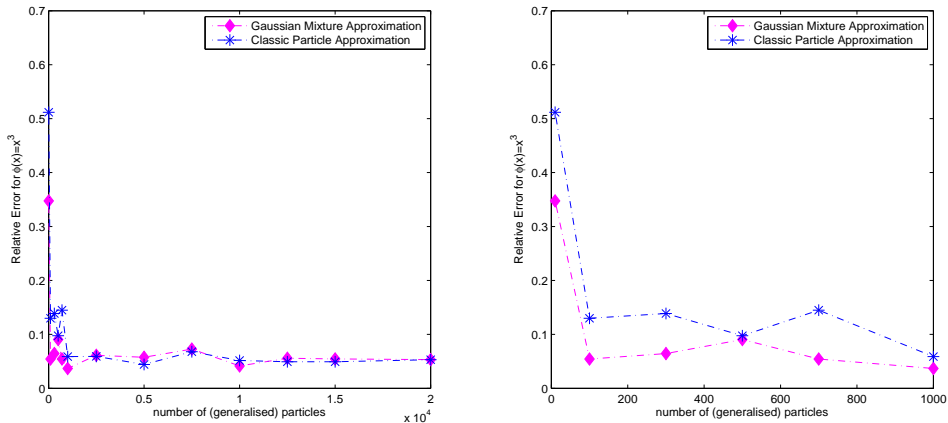}
\caption{Relative Errors with different number of (generalised) particles for $\varphi(x)=x^3$}
\label{fig.X3nParticles}
\end{figure}

From Figure \ref{fig.X2nParticles} and Figure \ref{fig.X3nParticles} we can see the convergence of both approximations with the increase of the number of (generalised) particles. It can be seen (from the right hand side of Figures \ref{fig.X2nParticles} and \ref{fig.X3nParticles}) that Gaussian mixture approximation performs better than the classic particle filters when the number of (generalised) particles $n$ is small. This is because by using the Gaussian mixture approximation, each (generalised) particle carries more information about the signal (from its variance) than the classic particle does. Therefore a smaller number of particles is required in order to obtain the same level of accuracy.

As the number of (generalised) particles increases, we can see (from the left hand side of Figures \ref{fig.X2nParticles} and \ref{fig.X3nParticles}) that the Gaussian mixture approximation converges faster than the classic particle filters; and we are able to obtain a good approximation for both methods with $10^4$ particles. There is no significant improvement if we increase the number $n$ of (generalised) particles further more in both approximating systems. It can also be seen that, although the error decreases as $n$ increases, it does not decrease monotonically and instead there is some fluctuation. This fluctuation comes from the additional randomness causing by the corrections or time discretisation step, which is not taken into account in the theoretical convergence analysis in this paper.

\section{Conclusions}
In this paper we analyse a class of approximations of the posterior
distribution under continuous time framework. In particular, we investigate in details the case where Gaussian mixtures are used to approximate the posterior distribution.

The $L^2$-convergence rate of such approximation is obtained. This method is a natural extension of the classic particle filters. In general, the approximating measure has a smooth density with respect to the Lebesgue measure and this can enable us to study more properties of the posterior measures than the classic particle filters do; especially this makes it possible to study various properties about the density of $\rho_t$ through its approximation $\rho_t^n$.  Furthermore, for a small number of particles, the Gaussian mixture particle filters also performs better.
It can also be seen that the asymptotic behaviour ($n\rightarrow\infty$) of the Gaussian mixtures approximation is similar to the classic particle filters, which is not surprising. As the number of (generalised) particles increases,
the quantisation of the posterior distribution becomes finer
and finer. Therefore, \emph{asymptotically}, the positions and the
weights of the particles provide sufficient information to obtain
a good approximation.

Apart from the $L^2$-convergence rate, a central limit type result can also be obtained for such Gaussian mixtures approximations, which can be found in \cite{Li} with a comprehensive study. Another paper containing a comprehensive study of the central limit theorem is in preparation (see \cite{Crisan and Li 2}).

\appendix
\addappheadtotoc

\section{Appendix}\label{ch.Multinomial_convergence}

\subsection{Proof of Lemma \ref{lem.bound_M}}
Without loss of generality, we choose the test function $\varphi$ to be non-negative (since we can write $\varphi=\varphi^+-\varphi^-$). Since the random variables $\{o_{j'}^{n,k\delta},\ j'=1,\ldots,n\}$ are negatively correlated (see Proposition 9.3 in \cite{Bain and Crisan}), it follows that
\begin{align}
&\tilde{\mathbb E}\left[\left(\frac{1}{n}\sum_{j=1}^n\xi_{i\delta}^n\left(\left(o_{j}^{n,i\delta}-n\bar a_{j}^n(i\delta-)\right)\varphi(X_{j}^n(i\delta))\right)\right)^2\Bigg|\mathcal Y_{i\delta-}\vee\mathcal F_{i\delta-}\right]\nonumber\\
\leq&\frac{(\xi_{i\delta}^n)^2}{n^2}\|\varphi\|_{0,\infty}^2\sum_{j=1}^n\left\{n\bar a_j^n(i\delta-)\right\}\left\{1-n\bar a_j^n(i\delta-)\right\}\nonumber\\
\leq&\frac{(\xi_{i\delta}^n)^2}{n^2}\|\varphi\|_{0,\infty}^2\sum_{j=1}^n\left|1-n\bar a_j^n(i\delta-)\right|\nonumber\\
\leq&\frac{(\xi_{i\delta}^n)^2}{n^2}\|\varphi\|_{0,\infty}^2nC_\delta\sqrt\delta
=C_\delta\sqrt\delta\frac{(\xi_{i\delta}^n)^2}{n}\|\varphi\|_{0,\infty}^2;\nonumber
\end{align}
By taking the expectation on both sides, we have
\begin{align}
\tilde{\mathbb E}\left[\left(\frac{1}{n}\sum_{j=1}^n\xi_{i\delta}^n\left(\left(o_{j}^{n,i\delta}-n\bar a_{j}^n(i\delta-)\right)\varphi(X_{j}^n(i\delta))\right)\right)^2\right]\leq\frac{C_\delta c_1^{t,2}}{n}\sqrt\delta\|\varphi\|_{0,\infty}^2.\nonumber
\end{align}
Therefore
\begin{align}
\tilde{\mathbb E}\left[\left(A_{[t/\delta]}^{n,\varphi}\right)^2\right]\leq\sum_{i=1}^{[t/\delta]}\frac{C_\delta c_1^{t,2}}{n}\sqrt\delta\|\varphi\|_{0,\infty}^2\leq\frac{[t/\delta]C_\delta c_1^{t,2}}{n}\sqrt\delta\|\varphi\|_{0,\infty}^2
\leq\frac{tC_\delta c_1^{t,2}}{\sqrt\delta n}\|\varphi\|_{0,\infty}^2
\end{align}
if we let $c_4(T)=TC_\delta c_1^{T,2}$.

For $D_{[t/\delta]}^{n,\varphi}$, first by noting that
\begin{align}
&\int_{\mathbb
R}\varphi\left(X_j^n(i\delta)+y\sqrt{\alpha\beta}\right)\frac{e^{\frac{-y^2}{2}}}{\sqrt{2\pi}}dy-\varphi(X_{j}^n(i\delta))
=\frac{\alpha\beta}{2}\varphi''\left(X_j^n(i\delta)\right)+\mathcal{O}\left((\alpha\beta)^2\right);
\end{align}
then it is clear that we only need to show
\begin{align}
\tilde{\mathbb E}\left[\left(\frac{1}{n}\sum_{i=1}^{[t/\delta]}\xi_{i\delta}^n\sum_{j=1}^no_{j}^{n,i\delta}\left((\alpha\beta)\varphi''\left(X_j^n(i\delta)\right)\right)\right)^2\right]\leq{c_5(T)(\alpha\beta)^2}\|\varphi\|_{0,\infty}^2.
\end{align}
Observe that
\begin{align}\label{eq.bound_for_one_time_period}
&\tilde{\mathbb E}\left[\left(\frac{1}{n}\xi_{i\delta}^n\sum_{j=1}^no_{j}^{n,i\delta}\left((\alpha\beta)\varphi''\left(X_j^n(i\delta)\right)\right)\right)^2\Bigg|\mathcal F_{i\delta-}\right]\nonumber\\
\leq&\frac{(\xi_{i\delta}^n)^2(\alpha\beta)^2}{n^2}\|\varphi\|_{2,\infty}^2\tilde{\mathbb
E}\left[\left(\sum_{j=1}^n\left[\left(o_{j}^{n,i\delta}-n\bar a_j^n(i\delta-)\right)+n\bar
a_j^n(i\delta-)\right]\right)^2\Bigg|\mathcal F_{i\delta-}\right]\nonumber\\
\leq&\frac{2(\xi_{i\delta}^n)^2(\alpha\beta)^2}{n^{2}}\|\varphi\|_{2,\infty}^2\left[\sum_{j=1}^n\tilde{\mathbb
E}\left[\left(o_{j}^{n,i\delta}-n\bar a_j^n(i\delta-)\right)^2\Big|\mathcal
F_{i\delta-}\right]+\left(\sum_{j=1}^nn\bar a_j^n(i\delta-)\right)^2\right].
\end{align}
For Tree Based Branching Algorithm (TBBA), by Proposition \ref{prop.tbba_proposition},
\begin{align}
\tilde{\mathbb E}\left[\left(o_{j}^{n,i\delta}-n\bar a_j^n(i\delta-)\right)^2\Big|\mathcal
F_{i\delta-}\right]\leq\left\{n\bar a_j^n(i\delta-)\right\}\left(1-\left\{n\bar
a_j^n(i\delta-)\right\}\right)\leq\frac{1}{4};
\end{align}
and then by taking expectation on both sides of \eqref{eq.bound_for_one_time_period},
we have
\begin{align}
&\tilde{\mathbb E}\left[\left(\frac{1}{n}\xi_{i\delta}^n\sum_{j=1}^no_{j}^{n,i\delta}\left((\alpha\beta)\varphi''\left(X_j^n(i\delta)\right)\right)\right)^2\right]\nonumber\\
\leq&\frac{2\|\varphi\|_{2,\infty}^2(\alpha\beta)^2}{n^{2}}\frac{n}{4}+\frac{2\|\varphi\|_{2,\infty}^2(\alpha\beta)^2}{n^2}n^2\tilde{\mathbb
E}\left[(\xi_{i\delta}^n)^2\left(\sum_{j=1}^n\bar a_j^n(i\delta-)\right)^2\right]\nonumber\\
\leq&\frac{\|\varphi\|_{2,\infty}^2(\alpha\beta)^2}{2n}+{2\|\varphi\|_{2,\infty}^2(\alpha\beta)^2}\sqrt{\tilde{\mathbb
E}\left[(\xi_{i\delta}^n)^4\right]\sum_{l=1}^n\sum_{j=1}^n\sqrt{\tilde{\mathbb
E}\left[\bar a_l^n(i\delta-)^2\right]\tilde{\mathbb E}\left[\bar a_j^n(i\delta-)^2\right]}}\nonumber\\
\leq&\frac{\|\varphi\|_{2,\infty}^2(\alpha\beta)^2}{2n}+{2\|\varphi\|_{2,\infty}^2(\alpha\beta)^2}\sqrt{c_1^{t,4}e^{c_2t}}\nonumber\\
\leq&{c^T(\alpha\beta)^2}\|\varphi\|_{2,\infty}^2,
\end{align}
where $c^T=\frac{1}{2}+2\sqrt{c_1^{t,4}e^{c_2t}}$.

Therefore, we obtain
\begin{align}
\tilde{\mathbb E}\left[\left(\frac{1}{n}\sum_{i=1}^{[t/\delta]}\xi_{i\delta}^n\sum_{j=1}^no_{j}^{n,i\delta}\left((\alpha\beta)\varphi''\left(X_j^n(i\delta)\right)\right)\right)^2\right]\nonumber
\leq[t/\delta]\sum_{i=1}^{[t/\delta]}{c^T(\alpha\beta)^2}\|\varphi\|_{2,\infty}^2={c_5(T)(\alpha\beta)^2}\|\varphi\|_{2,\infty}^2
\end{align}
if we let $c_5(T)=T^2c^T/\delta^2$.\\

As for $G_{[t/\delta]}^{n,\varphi}$, first note that $X_{j}^n(i\delta)\sim N\left(v_{j}^n(i\delta),\omega_{j}^n(i\delta)\right)$ and $X_{j}^n$s are mutually independent $(j=1,\ldots,n)$, then we have
\begin{align}
&\tilde{\mathbb E}\left[\left(\sum_{j=1}^n\xi_{i\delta}^n\bar a_{j}^n(i\delta-)\left[\varphi(X_{j}^n(k\delta))-\tilde{\mathbb{E}}\left(\varphi(X_{j}^n(k\delta))\right)\right]\right)^2\Bigg|\mathcal Y_{i\delta-}\right]\nonumber\\
\leq&\sum_{j=1}^n\left(\xi_{i\delta}^n\bar a_{j}^n(i\delta-)\right)^24\|\varphi'\|_{0,\infty}^2\tilde{\mathbb E}\left[\left(X_{j}^n(k\delta)-\tilde{\mathbb{E}}\left(X_{j}^n(k\delta)\right)\right)^2\Bigg|\mathcal Y_{i\delta-}\right]\nonumber\\
\leq&4\sum_{j=1}^n\left(\xi_{i\delta}^n\bar a_{j}^n(i\delta-)\right)^2\|\varphi\|_{1,\infty}^2\|\sigma\|_{0,\infty}^2\alpha\delta
\triangleq c_{\sigma}\alpha\delta\|\varphi\|_{1,\infty}^2\sum_{j=1}^n\left(\xi_{i\delta}^n\bar a_{j}^n(i\delta-)\right)^2.\nonumber
\end{align}
We know from the proof of Lemma 4.4 in \cite{Obanubi} that for any $p>0$,
$
\tilde{\mathbb E}\left[\left(\bar a_j^n(t)\right)^p\right]\leq\frac{1}{n^p}\exp(c_pt);
$
then by taking the expectation on both sides, we have
\begin{align}
&\tilde{\mathbb E}\left[\left(\sum_{j=1}^n\xi_{i\delta}^n\bar a_{j}^n(i\delta-)\left[\varphi(X_{j}^n(k\delta))-\tilde{\mathbb{E}}\left(\varphi(X_{j}^n(k\delta))\right)\right]\right)^2\right]\nonumber\\
\leq&c_{\sigma}\alpha\delta\|\varphi\|_{1,\infty}^2\sum_{j=1}^n\tilde{\mathbb E}\left[\left(\xi_{i\delta}^n\bar a_{j}^n(i\delta-)\right)^2\right]
\leq c_{\sigma}\alpha\delta\|\varphi\|_{1,\infty}^2\sum_{j=1}^n\sqrt{\tilde{\mathbb E}\left[\left(\xi_{i\delta}^n\right)^4\right]\tilde{\mathbb E}\left[\left(\bar a_{j}^n(i\delta-)\right)^4\right]}\nonumber\\
\leq&c_{\sigma}\alpha\delta\|\varphi\|_{1,\infty}^2\frac{1}{n^2}\sum_{j=1}^n\sqrt{c_1^{t,4}\exp(c_4t)}
=\frac{c_{\sigma}\sqrt{c_1^{t,4}\exp(c_4t)}}{n}\alpha\delta\|\varphi\|_{1,\infty}^2.\nonumber
\end{align}
Finally we have
\begin{align}\label{eq.bound_for_the_second_term_in_M}
\tilde{\mathbb E}\left[\left(G_{[t/\delta]}^{n,\varphi}\right)^2\right]\leq\sum_{i=1}^{[t/\delta]}\frac{c_{\sigma}\sqrt{c_1^{t,4}\exp(c_4t)}}{n}\alpha\delta\|\varphi\|_{1,\infty}^2
\leq \frac{tc_{\sigma}\sqrt{c_1^{t,4}\exp(c_4t)}}{n}\alpha\|\varphi\|_{1,\infty}^2.
\end{align}
The result follows by letting $c_6(T)=Tc_{\sigma}\sqrt{c_1^{T,4}\exp(c_4T)}$.

\subsection{Proof of Theorem \ref{prop.general_convergence_result}}
The proof is standard and we present here its principle steps, further details can be found in Section A.2 in \cite{Li}.

We first show that for any $t\in[0,T]$
$
\Vert\mu_t^n(\mathbf{1})\Vert_p^p=\tilde{\mathbb{E}}\left[\vert\mu_t^n(\mathbf{1})\vert^p\right]<\infty.
$
Observe that for $\varphi=\mathbf{1}$ and $t\in[0,T]$
\begin{equation*}
\mu_t^n(\mathbf{1})=\mu_0^n(a_s(\mathbf{1}))+\sum_{l=1}^\alpha R_{t,l}^{n,\mathbf{1}}+\sum_{k=1}^\beta\int_0^t\mu_r^n\left(a_{s,r}^k(\mathbf{1})\right)dW_r^k,
\end{equation*}
then, by Minkowski, Burkholder-Davis-Gundy and Jensen's inequalities we have that
\begin{align}\label{eq.control_of_mu_t_n_1}
\Vert\mu_t^n(\mathbf{1})\Vert_p^p
&\leq2^{p-1}(\alpha+1)^p\frac{\gamma}{n^q}+2^{p-1}\beta^pKt^{p/2-1}C^p\int_0^t\Vert\mu_r^n\left(\mathbf{1}\right)\Vert_p^pdr,
\end{align}
where $C=\max(C_1,\ldots,C_\beta)$. Then from Gronwall's inequality we have
\begin{align}
&\tilde{\mathbb{E}}\left[\left|\mu_s^n\left(\mathbf{1}\right)\right|^p\right]=\Vert\mu_s^n\left(\mathbf{1}\right)\Vert_p^p\nonumber
\leq D<\infty.\nonumber
\end{align}
Using a similar approach, with $\mathbf{1}$ replaced by $\varphi$, we obtain
\begin{equation*}
\Vert\mu_t^n(\varphi)\Vert_p^p\leq2^{p-1}(\alpha+1)^p\frac{\gamma}{n^q}\Vert\varphi\Vert_{m,\infty}^p+2^{p-1}\beta^{p-1}Kt^{p/2-1}\sum_{k=1}^\beta \int_0^t\tilde{\mathbb{E}}\left[\left|\mu_r^n\left(a_{s,r}^k(\varphi)\right)\right|^p\right]dr.
\end{equation*}
Now denote by
\begin{equation}\label{eq.A_s_t_k_1}
A_{s,t}^k\triangleq\int_0^t\tilde{\mathbb{E}}\left[\left|\mu_r^n\left(a_{s,r}^k(\varphi)\right)\right|^p\right]dr=\int_0^t\Vert\mu_r^n\left(a_{s,r}^k(\varphi)\right)\Vert_p^pdr,
\end{equation}
and $\Delta=(\alpha+1)^p\frac{\gamma}{n^q}$, we have
\begin{equation}\label{eq.control_of_mu_t_n_varphi}
\Vert\mu_t^n(\varphi)\Vert_p^p\leq2^{p-1}\Delta\Vert\varphi\Vert_{m,\infty}^p+2^{p-1}\beta^{p-1}Kt^{p/2-1}\sum_{k=1}^\beta A_{s,t}^k.
\end{equation}
Similarly, we have that
\begin{align}\label{eq.first_iteration}
\Vert\mu_t^n(\varphi)\Vert_p^p\leq
&2^{p-1}\Delta\Vert\varphi\Vert_{m,\infty}^p+2^{p-1}\beta^pKt^{p/2}C^pD\Vert\varphi\Vert_{m,\infty}^p.
\end{align}
Replacing $\varphi$ by $a_{s,r}^k(\varphi)$ in \eqref{eq.first_iteration}, we get that
\begin{align}\label{eq.first_bound}
\Vert\mu_r^n\left(a_{s,r}^k(\varphi)\right)\Vert_p^p\leq&2^{p-1}\Delta\Vert a_{s,r}^k(\varphi)\Vert_{m,\infty}^p+2^{p-1}\beta^pKr^{p/2}C^pD\Vert a_{s,r}^k(\varphi)\Vert_{m,\infty}^p\nonumber\\
\leq&2^{p-1}\Delta C^p\Vert\varphi\Vert_{m,\infty}^p+2^{p-1}\beta^pKr^{p/2}C^{2p}D\Vert\varphi\Vert_{m,\infty}^p,
\end{align}
Substituting into \eqref{eq.A_s_t_k_1} and denote by $\kappa=p/2$, we have for $k=1,\ldots,\beta$
\begin{equation}\label{eq.A_s_t_k_2}
A_{s,t}^k\leq2^{p-1}\Delta C^pt\Vert\varphi\Vert_{m,\infty}^p+2^{p-1}\beta^pKC^{2p}D\frac{t^{\kappa+1}}{\kappa+1}\Vert\varphi\Vert_{m,\infty}^p
\end{equation}
and \eqref{eq.control_of_mu_t_n_varphi} becomes
\begin{align}\label{eq.second_iteration}
\Vert\mu_t^n(\varphi)\Vert_p^p
\leq&2^{p-1}\Delta\Vert\varphi\Vert_{m,\infty}^p+2^{2(p-1)}\beta^pK C^pt^\kappa\Delta\Vert\varphi\Vert_{m,\infty}^p+2^{2(p-1)}\beta^{2p}K^2C^{2p}D\frac{t^{2\kappa}}{\kappa+1}\Vert\varphi\Vert_{m,\infty}^p.
\end{align}
Repeat what was done in \eqref{eq.first_bound} and \eqref{eq.A_s_t_k_2}, and from \eqref{eq.second_iteration}, we have that
\begin{align}
A_{s,t}^k\leq&2^{p-1}C^p\Delta t\Vert\varphi\Vert_{m,\infty}^p+2^{2(p-1)}\beta^pKC^{2p}\Delta\frac{t^{\kappa+1}}{\kappa+1}\Vert\varphi\Vert_{m,\infty}^p\nonumber\\
+&2^{2(p-1)}\beta^{2p}K^2C^{3p}D\frac{t^{2\kappa+1}}{(\kappa+1)(2\kappa+1)}\Vert\varphi\Vert_{m,\infty}^p;\nonumber
\end{align}
and then \eqref{eq.control_of_mu_t_n_varphi} becomes
\begin{align}
\Vert\mu_t^n(\varphi)\Vert_p^p\leq&2^{p-1}\Delta\Vert\varphi\Vert_{m,\infty}^p+2^{2(p-1)}\beta^pK C^pt^\kappa\Delta\Vert\varphi\Vert_{m,\infty}^p+2^{3(p-1)}\beta^{2p}K^2C^{2p}\frac{t^{2\kappa}}{\kappa+1}\Delta\Vert\varphi\Vert_{m,\infty}^p\nonumber\\
+&2^{3(p-1)}\beta^{3p}K^3C^{3p}D\frac{t^{3\kappa}}{(\kappa+1)(2\kappa+1)}\Vert\varphi\Vert_{m,\infty}^p.\nonumber
\end{align}
Repeat the iteration process again, we have that
\begin{align}
A_{s,t}^k\leq&2^{p-1}C^p\Delta t\Vert\varphi\Vert_{m,\infty}^p+2^{2(p-1)}\beta^pKC^{2p}\Delta\frac{t^{\kappa+1}}{\kappa+1}\Vert\varphi\Vert_{m,\infty}^p\nonumber\\
+&2^{3(p-1)}\beta^{2p}K^2C^{3p}\Delta\frac{t^{2\kappa+1}}{(\kappa+1)(2\kappa+1)}\Vert\varphi\Vert_{m,\infty}^p\nonumber\\
+&2^{3(p-1)}\beta^{3p}K^3C^{4p}D\frac{t^{3\kappa+1}}{(\kappa+1)(2\kappa+1)(3\kappa+1)}\Vert\varphi\Vert_{m,\infty}^p;\nonumber
\end{align}
and that
\begin{align}
\Vert\mu_t^n(\varphi)\Vert_p^p\leq&2^{p-1}\Delta\Vert\varphi\Vert_{m,\infty}^p+2^{2(p-1)}\beta^pK C^pt^\kappa\Delta\Vert\varphi\Vert_{m,\infty}^p+2^{3(p-1)}\beta^{2p}K^2C^{2p}\frac{t^{2\kappa}}{\kappa+1}\Delta\Vert\varphi\Vert_{m,\infty}^p\nonumber\\
+&2^{4(p-1)}\beta^{3p}K^3C^{3p}\frac{t^{3\kappa}}{(\kappa+1)(2\kappa+1)}\Delta\Vert\varphi\Vert_{m,\infty}^p\nonumber\\
+&2^{4(p-1)}\beta^{4p}K^4C^{4p}D\frac{t^{4\kappa}}{(\kappa+1)(2\kappa+1)(3\kappa+1)}\Vert\varphi\Vert_{m,\infty}^p.\nonumber
\end{align}
In general after $k^{th}-$iteration, we have that
\begin{align}
\Vert\mu_t^n(\varphi)\Vert_p^{p,k}\triangleq&\Vert\mu_t^n(\varphi)\Vert_p^p\nonumber\\
\leq&2^{p-1}\Delta\Vert\varphi\Vert_{m,\infty}^p+2^{2(p-1)}\beta^pK C^pt^\kappa\Delta\Vert\varphi\Vert_{m,\infty}^p+2^{3(p-1)}\beta^{2p}K^2C^{2p}\frac{t^{2\kappa}}{\kappa+1}\Delta\Vert\varphi\Vert_{m,\infty}^p\nonumber\\
+&\cdots+\frac{2^{k(p-1)}\beta^{(k-1)p}K^{k-1}C^{(k-1)p}t^{(k-1)\kappa}}{(\kappa+1)(2\kappa+1)(3\kappa+1)\cdots((k-2)\kappa+1)}\Delta\Vert\varphi\Vert_{m,\infty}^p\nonumber\\
+&2^{k(p-1)}\beta^{kp}K^kC^{kp}D\frac{t^{r\kappa}}{(\kappa+1)(2\kappa+1)\cdots((k-2)\kappa+1)((k-1)\kappa+1)}\Vert\varphi\Vert_{m,\infty}^p.\nonumber
\end{align}
Letting $k\rightarrow\infty$, we get that\footnote{We use the convention that $\prod_{j=0}^{-1}=1$.}
\begin{align}
\tilde{\mathbb{E}}\left[\vert\mu_t^n(\varphi)\vert^p\right]=&\Vert\mu_t^n(\varphi)\Vert_p^p\nonumber\\
\leq&2^{p-1}\Delta\Vert\varphi\Vert_{m,\infty}^p+2^{2(p-1)}\beta^pK C^pt^\kappa\Delta\Vert\varphi\Vert_{m,\infty}^p+2^{3(p-1)}\beta^{2p}K^2C^{2p}\frac{t^{2\kappa}}{\kappa+1}\Delta\Vert\varphi\Vert_{m,\infty}^p\nonumber\\
&+\cdots+\frac{2^{k(p-1)}\beta^{(k-1)p}K^{k-1}C^{(k-1)p}t^{(k-1)\kappa}}{(\kappa+1)(2\kappa+1)(3\kappa+1)\cdots((k-2)\kappa+1)}\Delta\Vert\varphi\Vert_{m,\infty}^p\nonumber\\
&+\cdots\cdots\nonumber\\
=&2^{p-1}(\alpha+1)^p\frac{\gamma}{n^q}\Vert\varphi\Vert_{m,\infty}^p\sum_{k=1}^\infty\left[2^{(k-1)(p-1)}\beta^{(k-1)p}K^{k-1}C^{(k-1)p}\frac{t^{(k-1)\kappa}}{\prod_{j=0}^{k-2}(j\kappa+1)}\right]\nonumber.
\end{align}
Let $\eta_{t,k}=2^{(k-1)(p-1)}\beta^{(k-1)p}K^{k-1}C^{(k-1)p}\frac{t^{(k-1)\kappa}}{\prod_{j=0}^{k-2}(j\kappa+1)}$, we know $\xi_t\triangleq\sum_{k=1}^\infty\eta_{t,k}$ exists by the following ratio test 
$$
\lim_{k\rightarrow\infty}\frac{\eta_{t,k+1}}{\eta_{t,k}}=2^{p-1}\beta^pKC^p\frac{t^\kappa}{(k-1)\kappa+1}=0<1.
$$
Finally the result \eqref{eq.main_general_result} follows by setting $c_t=2^{p-1}(\alpha+1)^p\gamma\xi_t$.


\addcontentsline{toc}{chapter}{Bibliography}



\begin{thebibliography}{99}

\bibitem{Alspach and Sorenson} D. L. Alspach and H. W. Sorenson, ``Nonlinear Bayesian estimation using Gaussian sum approximations,'' \emph{IEEE Trans. Automatic Control}, vol. AC-17(4), pp. 439-448, Aug. 1972.

\bibitem{Anderson and Moore} B. D. O. Anderson and J. B. Moore, \emph{Optimal Filtering}, Prentice-Hall, Inc., Englewood Cliffs, New Jersey 07632, 1979.

\bibitem{Andrieu and Doucet} C. Andrieu and A. Doucet, ``Particle filtering for partially observed Gaussian state space models,'' \emph{Journal of Royal Statistical Society: Series B}, vol. 64(4), pp. 827-836, Oct. 2002.

\bibitem{Bain and Crisan} A. Bain and D. Crisan, \emph{Fundamentals of Stochastic
Filtering}, Stochastic Modelling and Applied Probability, vol. 60, Springer,
2008.

\bibitem{Beskos and Crisan and Jasra and Whiteley} A. Beskos, D. Crisan, A. Jasra, and N. Whiteley, ``Error bounds and normalizing constants for sequential monte carlo in high dimensions,'' \emph{arXiv:1112.1544}, 2011.

\bibitem{Carmi and Septier and Godsill} A. Carmi, F. Septier, and S.J. Godsill,
``The Gaussian mixture MCMC particle algorithm for dynamic cluster tracking,''
in \emph{Proc. The 12th International conference on information fusion},
Seattle, WA, USA, July 6-9 2009, pp. 1179-1186.

\bibitem{Fearnhead} J Carpenter, P. Clifford, P. Fearnhead, ``Improved particle
filter for nonlinear problems,'' \emph{IEE Proceedings on Radar, Sonar and
Navigation}, Vol. 146(1),  pp. 2-7, Feb. 1999.

\bibitem{Carvalho and Del Moral and Monin and Salut} H. Carvalho, P. Del Moral, A. Monin and G. Salut, ``Optimal nonlinear filtering in gps/ins integration,'' \emph{IEEE Trans. Aerospace and Electronic Systems,} vol. 33(3), pp. 835-850, Jul. 1997.

\bibitem{Chen and Liu} R. Chen and J. S. Liu, ``Mixture Kalman filters,'' \emph{Journal of Royal Statistical Society}, vol 62(3), pp. 493-508, 2000.

\bibitem{Crisan and Li} D. Crisan and K. Li, ``Generalised particle filters with Gaussian measures,'' \emph{Proceedings of 19th European Signal Processing Conference}, pp. 659-663, 2011.

\bibitem{Crisan and Li 2} D. Crisan and K. Li, ``A central limit type theorem for the Gaussian mixture approximations to the nonlinear filtering problem,'' \emph{In preparation}.

\bibitem{Crisan and Miguez} D. Crisan and J. Miguez, ``Particle approximation of the filtering density for state-space Markov models in discrete time,'' \emph{arXiv:1111.5866}, 2012.

\bibitem{Obanubi} D. Crisan and O. Obanubi, ``Particle filters with random
resampling times,''
\emph{Stochastic Processes and their Applications}, vol. 122, pp. 1332-1368,
Jan. 2012.

\bibitem{Crisan and Ortiz-Latorre} D. Crisan and S. Ortiz-Latorre, ``A KLV particle filter,'' \emph{submitted}, Jun., 2012.

\bibitem{crisroz}
D.  Crisan, and B. Rozovsky, editors,
\emph{The Oxford Handbook of Nonlinear Filtering},
Oxford University Press, 2011.

\bibitem{delmoral}
P. Del Moral, \textit{Feynman-Kac Formulae: Genealogical and
Interacting Particle Systems with Applications}. New York: Springer, 2004.

\bibitem{Djuric et al} P. M. Djuri\'c, M. Vemula, and M. F. Bugallo, ``Target tracking by particle filtering in binary sensor networks,'' \emph{IEEE Transactions on Signal Processing}, Vol. 56(6), pp. 2229-2238, Jun. 2008.

\bibitem{Doucet et al 3} A. Doucet, N. de Freitas, K. Murphy, and S. Russell, ``Rao-blackwellised particle filtering for dynamic Bayesian networks,'' \emph{Proceedings of the Sixteenth conference on Uncertainty in artificial intelligence}, pp. 176-183, 2000.

\bibitem{Doucet et al 2} A. Doucet, S. Godsill, and C. Andrieu, ``On sequential Monte Carlo sampling methods for Bayesian filtering,'' \emph{Statistics and Computing}, vol. 10, pp. 197-208, 2000.

\bibitem{Flament and Fleury and Davoust} M. Flament, G. Fleury and M.E. Davoust,
``Particle filter and Gaussian-mixture filter efficiency evaluation for terrain-aided
navigation,'' in \emph{Proc. EUSIPCO 2004}, Vienna, Austria, September 6-10,
2004, pp. 605-608.

\bibitem{Fox} D. Fox, ``Adapting the sample size in particle filters through KLD-sampling,'' \emph{The international Journal of robotics research}, vol. 22(12), pp. 985-1003, Dec. 2003.

\bibitem{Freitas} N. de Freitas, ``Rao-Blackwellised particle filtering for fault diagnosis,'' \emph{IEEE Aerospace Conference Proceedings}, 2002.

\bibitem{Gordon and Salmond and Ewing} N. Gordon, D. Salmond and C. Ewing, ``Bayesian state estimation for tracking and guidance using the bootstrap filter,'' \emph{Journal of Guidance, Control, and Dynamics,} vol. 18(6), pp. 1434-1443, 1995.

\bibitem{Gordon and Salmond and Smith} N. Gordon, D. Salmond and A. Smith, ``Novel approach to nonlinear/non-gaussian bayesian state estimation,'' \emph{IEEE Proc. Radar and Signal Processing,} vol. 140(2), pp. 107-113, Apr. 1993.

\bibitem{Guo et al} D. Guo, X. Wang, and R. Chen, ``New sequential Monte Carlo methods for nonlinear dynamic systems,'' \emph{Statistics and Computing}, vol. 15, pp. 135-147, 2005.

\bibitem{Gustafsson} F. Gustafsson, ``Particle filter theory and practice with positioning applications,'' \emph{IEEE Aerospace and Electronic Systems Magazine}, Vol. 25(7), pp. 53-82, Jul. 2010. 

\bibitem{Gustafsson et al} F. Gustafsson et al, ``Particle filters for positioning, navigation, and tracking,'' \emph{IEEE Transactions on Signal Processing}, vol. 50(2), Feb. 2002.

\bibitem{Handschin and Mayne} J.E. Handschin and D.Q. Mayne, ``Monte Carlo
techniques to estimate the conditional expectation in multi-stage non-linear fltering,'' \emph{International Journal of Control}, vol. 9(5), pp. 547-559, 1969.

\bibitem{Iglesias and Law and Stuart 0} M. Iglesias, K. Law and A.M. Stuart, ``The ensemble Kalman filter for inverse problems,'' \emph{arXiv:1209.2736}, Sep. 2012.

\bibitem{Iglesias and Law and Stuart} M. Iglesias, K. Law and A.M. Stuart, ``Evaluating Gaussian
approximations to Bayesian inversion for subsurface geophysical
applications,'' in preparation.

\bibitem{Kallianpur and Karandikar} G. Kallianpur and R. L. Karandikar, ``White noise calculus and nonlinear filtering theory,'' \emph{Annals of Probability}, vol. 13(4), pp. 1033-1107, 1985.

\bibitem{Karatzas and Shreve} I. Karatzas and S.E. Shreve, \emph{Brownian
Motion and Stochastic Calculus}, second ed., Graduate Texts in Mathematics,
vol. 113, Springer-Verleg, N.Y., 1991.

\bibitem{Kitagawa} G. Kitagawa, ``Monte carlo filter and smoother for non-gaussian nonlinear state space models,'' \emph{Journal of Computational and Graphical Statistics,} vol. 5(1), pp. 1-25, 1996.

\bibitem{Kotecha and Djuric} J.H. Kotecha and P.M. Djuri\'c, ``Gaussian particle
filtering,'' \emph{IEEE Trans. Signal Processing}, vol. 51, pp. 2593-2602,
Oct. 2003.

\bibitem{Kotecha and Djuric2} J.H. Kotecha and P.M. Djuri\'c, ``Gaussian sum particle
filtering,'' \emph{IEEE Trans. Signal Processing}, vol. 51, pp. 2602-2612,
Oct. 2003.

\bibitem{Kushner} H.J. Kushner, ``Approximations to optimal nonlinear flters,''
\emph{IEEE Trans. Automatic Control,} vol. 12(5), pp. 546-556, Oct. 1967.

\bibitem{Kwok et al} C. Kwok, D. Fox, and M. Meila, ``Real-time particle filters,'' \emph{Proceedings of the IEEE} Vol. 92(3), pp. 469-484, Mar. 2004.

\bibitem{Law and Stuart} K.J.H. Law and A.M. Stuart, ``Evaluating data
assimilation algorithms,'' to appear in \emph{Monthly Weather Review}, \emph{arXiv:1107.4118},
Mar. 2012. 

\bibitem{Le Gland and Monbet and Tran} F. Le Gland, V. Monbet and V.D. Tran,
``Large sample asymptotics for the ensamble Kalman filter,'' \emph{Handbook
on Nonlinear Filtering}, Oxford University Press, 2010.

\bibitem{Li} K. Li, ``Generalised particle filters,'' \emph{PhD Thesis, Imperial College London}, UK, 2013.

\bibitem{Morelande and Challa} M. R. Morelande and  S. Challa, ``Manoeuvring target tracking in clutter using particle filters,'' \emph{IEEE Transactions on Aerospace and Electronic Systems}, Vol.41(1), pp. 252-270, Jan. 2005.

\bibitem{Reich} S. Reich, ``A Gaussian mixture ensemble transform filter,''
submitted, \emph{eprint arXiv:1102.3089}, Feb. 2011.

\bibitem{Ren and Liang} Y. Ren and H. Liang, ``On the best constant in Marcinkiewicz–-Zygmund inequality,'' \emph{Statistics and Probability Letters}, vol. 53(3), pp. 227-233, Jun. 2001.

\bibitem{Rockner et al} M. R\"ockner, B. Schmuland, and X. Zhang, ``Yamada-Watanabe theorem for stochastic evolution equations in infinite dimensions,'' \emph{Condensed Matter Physics}, vol. 2(54), pp. 247-259, 2008.

\bibitem{Rogers and Williams} C. Rogers and D. Williams, \emph{Diffusions,
Markov Processes and Martingales: Volume I Foundations}, second ed., Cambridge,
UK, Cambridge University Press, 2000.

\bibitem{Sorenson and Alspach} H. W. Sorenson and D. L. Alspach, ``Recursive Bayesian estimation using Gaussian sums,'' \emph{Automatica}, vol. 7(4), pp. 465-479, Jul. 1971.

\bibitem{Stratonovich 1} R. L. Stratonovich, ``On the theory of optimal non-linear filtering of random functions,'' \emph{ Theory of Probability and its Applications,} vol. 4, pp. 223-–225, 1959.

\bibitem{Sun et al} X. Sun, L. Munoz, and R. Horowitz, ``Mixture Kalman filter based highway congestion mode and vehicle density estimator and its application,'' \emph{Proceedings of the American Control Conference 2004}, Vol. 3, pp. 2098-2103. 

\bibitem{Tam and Moore} P. K. Tam and J. B. Moore, ``A Gaussian sum approach to phase and frequency estimation,'' \emph{IEEE Trans. Communications}, vol. COM-25(9), pp. 435-942, Sep. 1977.

\bibitem{Szpirglas} J. Szpirglas, ``Sur l'\'equivalence d'\'equations diff\'erentielles
stochastiques \`a valeurs measures intervenant dans le filtrageMarkovien
non lin\'eaire'' [French], \emph{Ann. Inst. H. Poincar\'e Sect. B (N.S.)},
vol 14(1), pp. 33-59, 1978.

\bibitem{Van der Merwe and Wan} R. Van der Merwe and E. Wan, ``Gaussian mixture
sigma-point particle filters for sequential probabilistic inference in dynamic
state-space models,'' in \emph{Proc. ICASSP 2003}, pp. 701-704.

\bibitem{Wu et al} W. Wu,
M. J. Black, D. Mumford, Y. Gao, E. Bienenstock, and J. P. Donoghue, ``Modeling and decoding motor cortical activity using a switching Kalman filter,'' \emph{IEEE Transaction on Biomedical Engineering}, Vol. 51(6), pp. 933-942, Jun. 2004.

\bibitem{Zakai} M. Zakai, ``On the optimal filtering of diffusion processes,'' \emph{Z. Wahrscheinlichkeitstheorie und Verw}, Gebiete, vol. 11, pp. 230-243, 1969.

\end{thebibliography}
\end{document}